\newtheorem{Thm}{Theorem}[section]
\newtheorem{Lem}[Thm]{Lemma}
\newtheorem{Pro}[Thm]{Proposition}
\newtheorem{Cor}[Thm]{Corollary}
\theoremstyle{definition}
\theoremstyle{remark}
\newtheorem{Rem}[Thm]{Remark}
\newtheorem{Def}[Thm]{Definition}
\newcommand{\R}{\mathbb{R}}
\newcommand{\Hy}{\mathbb{H}}
\newcommand{\cF}{\mathcal{F}}
\newcommand{\cH}{\mathcal{H}}
\newcommand{\cM}{\mathcal{M}}
\newcommand{\cW}{\mathcal{W}}
\renewcommand{\phi}{\varphi}
\newcommand{\id}{\operatorname{id}}
\newcommand{\Vol}{\operatorname{Vol}}
\newcommand{\tr}{\operatorname{tr}}
\renewcommand{\cos}{\operatorname{cos}}
\begin{document}

\title  [Asymptotically harmonic manifolds  with  $h = 0$] {Geometry of Asymptotically harmonic manifolds with minimal horospheres}
\author {Hemangi Shah*}
\thanks{*The author  would like to  thank University of Extremadura, Spain, for  its  hospitality and  support while this  work  was initiated}
\dedicatory{Hemangi  Shah  would  like  to  dedicate  this article  to  her  father, Shri Madhusudan  Shah, to  ignite a spark 
of creativity  in  her.}
\address{Harish Chandra Research Institute,  HBNI, Chhatnag Road, Jhusi, Allahabad 211019, India.}
\email{hemangimshah@hri.res.in}

 \begin{abstract}
\noindent 
 $(M^n,g)$ be a complete  Riemannian manifold  without conjugate points.  
 In  this  paper,  we  show  that  if  $M$ is also simply connected,  then 
  $M$  is flat,  provided  that  $M$  is also asymptotically harmonic  manifold  with minimal  horospheres  (AHM). 
 The (first order)  flatness of  $M$  is  shown  by  using the strongest  criterion: 
  $\{{e_i}\}$  be  an orthonormal  basis  of  $T_{p}M$  and  $\{b_{e_{i}}\}$ 
 be  the corresponding  Busemann  functions  on $M$.   Then,  
(1) The  vector  space  $V =  span\{b_{v} | v \in T_{p}M \}$
is  finite  dimensional  and  dim  $V  = $  dim $M = n$.
(2)  $\{\nabla b_{e_i}(p) \}$  is  a  global  parallel  orthonormal basis  of  $T_{p}M$  
for  any  $p \in  M$.   Thus,  $M$  is  a  parallizable  manifold.
And
(3)$F  :  M  \rightarrow  {\R}^n$ defined by
$F(x)  =  (b_{e_1}(x),  b_{e_{2}}(x),  \cdots, b_{e_{n}}(x)),$
is  an  isometry   and  therefore, $M$  is  flat.  Consequently,  AH manifolds can have  either  polynomial or  exponential volume growth,
generalizing  the corresponding result  of \cite{N.05}  for  harmonic  manifolds. In  case  of  harmonic  manifold  with minimal  horospheres  (HM),  the  (second order)  flatness was  proved in  \cite{RS.02b}  by showing that  $span\{b_{v}^2 | v \in T_{p}M \}$  is  finite  dimensional. We conclude that, the  results  obtained  in this  paper  are the strongest and  wider  in  comparison  to harmonic manifolds,  which are  known  to be AH. \\
In fact, our proof shows the more generalized result, viz.:
If $(M,g)$ is a non-compact, complete, connected Riemannian manifold of infinite injectivity radius and of subexponential volume growth, then M is a first order flat manifold.   
\end{abstract}
\keywords{Asymptotically harmonic manifold, harmonic manifold, Busemann function, mean curvature, horospheres, minimal horospheres, volume  growth entropy,
 Killing  vector field} 
\subjclass[2010]{Primary 53C20; Secondary 53C25, 53C35}
\maketitle 

\tableofcontents

\section{Introduction and Preliminaries}
\indent 
One  of  the important problems  in the  geometry  of  Hadamard  manifold (manifold of nonpositive curvature) $M$  is:  To  what  extent  
{\it horospheres} (geodesic sphere  of  infinite radius) of  $M$  determine the  geometry  of  $M$? This  is  a  very  popular topic of current 
research as  can be  seen from the  very  recent  paper  \cite{IKPS.17},  where  the connection between hessian of  Busemann functions
and  rank  of  Hadamard  manifold  is  established. \\\\
\indent  
Horospheres,  geodesic spheres  of  infinite radius,  of  $M$, are level  sets  of  {\it Busemann functions}, which are  defined  below.\\
Let $M$  be  a  complete, simply connected Riemannian manifold without conjugate points.  Then  by  Cartan-Hadamard  theorem,
every  geodesic  of  $M$  is  a line.   Let $SM$ be the unit tangent bundle of $M$.
For $v \in SM$, let $\gamma_{v}$ be the geodesic line in $M$ with ${\gamma'_{v}}(0) = v$. Then
 $b{_v}^{+}:M\rightarrow \R$  and  $b{_v}^{-}:M\rightarrow \R$  denote the two  Busemann functions
associated to  $\gamma_v$,  respectively,  towards  $+\infty$  and  $-\infty$,  and are defined as:
\begin{eqnarray*}
      b_{v}^{+}(x) &=& \lim_{t\rightarrow\infty} d(x,\gamma_v(t))- t,\\
      b_{v}^{-}(x) &=& \lim_{t\rightarrow-\infty} d(x,\gamma_v(t))+ t.
\end{eqnarray*}
 Note that $b_{v}^{\pm}(\gamma_v(r))=\mp r$. The level sets, ${{b_v}^{\pm}}^{-1}(t)$, are called {\it horospheres} of $M$.
Thus, the two Busemann function  can be interpreted as distance  function from $\pm \infty$,  can  be  defined  for  any  line  in  $M$.
 Refer \cite{P.98} for details on Busemann functions.  \\
 
 A complete Riemannian manifold is called \emph{harmonic}, if all the geodesic spheres of sufficiently small radii
 are of constant mean curvature. The known examples of harmonic manifolds include flat spaces and locally rank one symmetric spaces.
 In $1944$, Lichnerowicz conjectured that any simply connected harmonic manifold is either flat or a rank one symmetric space. 
 This conjecture is true in  compact case and false in  non-compact one.
  However, the other questions in the non-compact case remain open. 
  For the development of the conjecture see the references in \cite{RS.02b}.
   In particular, if an harmonic manifold does not have conjugate points, then the geodesic spheres of 
   any radii are of constant mean curvature and is also AH (cf. \cite[Remark 2.2]{H.06},  cf. \cite{RS.03}). 
   Moreover, by  Allamigeon's Theorem \cite{B.78},  it follows that any complete, simply connected and non-compact harmonic manifold has 
   no conjugate points.  For definition and more details about harmonic manifolds see  \cite{B.78}.  In  \cite{H.06} J.~Heber 
   obtained the complete classification of simply connected harmonic manifolds in the homogeneous case. \\\\
   
 \indent
It  is  very  well  known  that, eg.,  real  hyperbolic  space, ${\Hy}^n$,  real  space  form,  is  characterized  by  its  horospheres.
Equivalently,  ${\Hy}^n$,   which  is  a {\it  harmonic  manifold},  is  characterized  by  its  volume  density  function,  denoted  by $\Theta(r)$, which
in  polar co-ordinates  can be  expressed  as $\Theta(r) =  r^{n-1}   \sqrt  {\det {g_{ij}}}(r)$.  In this case, 
$\Theta(r) =  {\sinh}^{n-1}(r)$ 
and the  mean curvature of horospheres  $\displaystyle \frac{\Theta'(r)}{\Theta(r)} =  (n-1) =  \Delta {b_r}^{\pm}$. \\
 Szabo  asked:  To  what extend  the  density function  of a  harmonic  manifold $M$,  determine  the  geometry  of  $M$?
 The  affirmative  answer  to  his  question  can be  found  in  \cite{RR.97},  in  case  of harmonic symmetric  spaces of rank one.   \\
 
 {\it Asymptotically harmonic}  manifolds 
 are  asymptotic  generalization  of harmonic  manifolds.  
 They  were   originally  introduced   by  Ledrappier (\cite{L.90},  Theorem  $1$),
in connection  with   rigidity  of measures  related  to  the   Dirichlet
problem  (harmonic  measure)  and  the  dynamics  of  the geodesic  flow
(Bowen-Margulis  measure).

\begin{Def}\label{ah}
 A complete, simply connected Riemannian manifold without conjugate
points is called  AH, if the mean
curvature of its horospheres is a universal constant. Equivalently, if
its Busemann functions satisfy $\Delta {b_v}^{\pm}  \equiv h,\; \forall v\in SM$,  where  $h$ is a nonnegative constant. 
\end{Def}
Then by regularity of elliptic partial differential equations, ${b_v}^{\pm}$ is a smooth function on $M$ for all $v$ and all horospheres of $M$
are smooth, simply connected hypersurfaces in $M$ with constant mean curvature $h\geq 0$. 
For example, every simply connected, complete harmonic manifold without
conjugate points is   AH.  See  \cite{RS.03}  for  a proof.\\
   
  \indent
  Till  to-date  the  only known examples of  AH manifolds  are   harmonic manifolds.  
  Intuitively one  thinks  that  the  class  of  harmonic  and  AH manifolds  coincide.  And the Lichnerowicz
   conjecture  should  be  true for  AH manifolds  as  well.  In this  paper  we  answer  this  question partially
   that  these  two  classes coincide  in  case  when $h =0$.   For  more  details on questions related to Lichnerowicz  conjecture
   for AH  manifolds  see \cite{Z.11}, \cite{Z.12}  and  \cite{KP.05}. \\

Note  that  study  of AH manifolds starts  from  dimension $2$.
 In dimension 2, the only AH manifolds are  symmetric  viz., $\Hy^2$ and $\R^2$.   
 In dimension 3, AH manifolds have been classified in \cite{HKS.07,K.94,SS.08,S.16} and in non-compact, Einstein and homogeneous case in \cite{H.06}. For more details on AH spaces, we refer to the
discussion and to the references in \cite{HKS.07}. Important results in this context are contained in \cite{BCG.95} and \cite{K.94}.\\

Thus, AH manifolds are defined  via  the  property that all  of  its  horospheres,
a  parallel family of hypersurfaces, have the  same  constant mean curvature.  In view  of the  above  classification
the  natural analogue  of  Szabo's  question is:
whether the  asymptotic  growth of volume density function or equivalently  whether the geometry of  horospheres of an AH manifold influence  geometry  of  its  ambient  space?\\

For $v\in SM$, the corresponding horosphere is {\it totally umbilical} with constant principal curvature 
$h \in \R$ if and only if $u^+(t)=h \id_{\gamma_v'(t)^\bot}$. 
Thus, in  this  case  $M$ is Einstein of constant non-positive curvature  $-h^2$, 
as   $R(t)=-h^2 id_{\gamma_v'(t)^\bot}$ from \eqref{eq:ricatti}. 
Thus,  if  $h =0$, then  $u^+(t) \equiv 0$, i.e. if every horosphere of an AH  manifold is totally geodesic, 
 then the ambient manifold is flat.  See \cite{ISS.14} for more rigidity results  about  AH manifolds.\\
Moreover, the classification of $3$-dimensional AH manifolds  follows  from  \cite{SS.08}  and the recent paper
 \cite{S.16}.   In particular,  in \cite{S.16}  the following  result  was  proved:

\begin{Thm}\label{flat3}
 $(M, g)$ be a complete and simply connected Riemannian manifold of dimension $3$ without conjugate points. If $M$ is  an AH of constant $h = 0$, then $M$ is flat.
  \end{Thm}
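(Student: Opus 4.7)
The plan is to show that the shape operator of every horosphere vanishes identically along every geodesic; by the matrix Riccati equation this forces the normal curvature operator $R$ to vanish on every normal plane, and in dimension $3$ this is equivalent to flatness. Fix a unit $v \in S_pM$ and consider the Hessians $U^{\pm}(t) = (\operatorname{Hess}\, b_v^{\pm})|_{\gamma_v'(t)^{\perp}}$ along $\gamma_v$. Since $M$ has no conjugate points and is simply connected, both are smooth symmetric operators on the two-dimensional normal space for all $t \in \R$, and both satisfy the matrix Riccati equation $U' + U^2 + R = 0$. The AH condition with $h = 0$ gives $\tr U^+ = \tr U^- = 0$.

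\textbf{Step 1 (stable equals unstable via dimension).} I would first invoke the Green--Eberlein comparison $U^+ \le U^-$, available on any manifold without conjugate points. The difference $U^- - U^+$ is then a non-negative, symmetric, trace-free $2 \times 2$ operator, so it must vanish; hence $U^+ \equiv U^-$ along every geodesic and in particular $U := U^+$ is bounded for all $t \in \R$. This is the first essential use of $\dim M = 3$: in higher dimensions a trace-free non-negative symmetric matrix need not be zero.

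\textbf{Step 2 (algebraic reduction).} A symmetric trace-free $2 \times 2$ operator satisfies the algebraic identity $U^2 = \tfrac12 \|U\|^2 \cdot I$. Taking the trace of the Riccati equation yields $\Ric(\gamma_v'(t)) = -\|U(t)\|^2 \le 0$, while its trace-free part reduces to $U' = -R_{\mathrm{tf}}$, where $R_{\mathrm{tf}}$ is the trace-free part of $R$. So the whole Riccati equation is controlled by the single scalar $\phi(t) := \|U(t)\|^2 \ge 0$, which by Step 1 is bounded on $\R$.

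\textbf{Step 3 and the main obstacle.} To conclude $U \equiv 0$, I would differentiate Riccati a second time and use $U^2 = \tfrac12 \phi\, I$ together with the second Bianchi identity (which in dimension $3$ algebraically expresses the full Riemann tensor via the Ricci tensor) to derive a differential inequality of the form $\phi''(t) \ge c\,\phi(t)^2$ along $\gamma_v$ for an absolute constant $c>0$. A bounded non-negative function on $\R$ satisfying such an inequality is convex and bounded, hence constant, and then $c\phi^2 \le 0$ forces $\phi \equiv 0$. Consequently $U \equiv 0$ along every geodesic, $R \equiv 0$ on every normal plane, every sectional curvature of $M$ vanishes, and $M$ is flat. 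The principal obstacle is precisely this quadratic inequality: after one differentiation, the ``linear'' term produces $\nabla_{\gamma'} R$, and controlling it cleanly requires the three-dimensional Bianchi structure to cooperate with the algebraic reduction $U^2 = \tfrac12\phi\, I$. Everything else in the argument is standard Riccati linear algebra plus the no-conjugate-points comparison.
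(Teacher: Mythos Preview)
The paper does not prove Theorem~\ref{flat3} directly; it is quoted from \cite{S.16}. The present paper's own contribution is the general Theorem~\ref{f}, which establishes flatness of AHM's in \emph{all} dimensions by an entirely different route (Strong Liouville Type Property $\Rightarrow$ every Killing field is parallel $\Rightarrow$ the Busemann map $F:M\to\R^n$ is an isometry). So there is no three-dimensional Riccati argument in this paper to compare against; the paper's global method makes no use of the $2\times 2$ linear algebra you exploit.

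Your Steps~1 and~2 are correct and are exactly the natural dimension-$3$ reductions: the comparison $U^+\le U^-$ together with $\tr(U^--U^+)=0$ forces $U^+=U^-=:U$ because a nonnegative trace-free symmetric $2\times2$ matrix vanishes, and then $U^2=\tfrac12\|U\|^2 I$ is the correct Cayley--Hamilton identity.

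The genuine gap is Step~3. Differentiating $\phi=\tr(U^2)$ twice and using $U^2=\tfrac12\phi\,I$ one finds
\[
\phi'' \;=\; 2\,\tr\!\big((U')^{2}\big)\;-\;2\,\tr\!\big(U\,R'\big),\qquad R'=\nabla_{\gamma'}R.
\]
The first term is nonnegative, but the second is uncontrolled. Even in dimension $3$, where the Riemann tensor is algebraically determined by the Ricci tensor, its \emph{covariant derivative along $\gamma$} is not determined by pointwise curvature data, and you have no a priori bound on $\nabla R$. The second Bianchi identity only constrains certain contractions of $\nabla R$; it does not force $\tr(U R')$ to have a sign or to be dominated by $\phi^2$. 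Thus the inequality $\phi''\ge c\,\phi^2$ is asserted but not established, and without it the argument does not close. Steps~1--2 correctly reduce the problem to showing that a single bounded trace-free $2\times2$ Riccati solution vanishes, but that last step still requires an idea you have not supplied.
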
 

In case of harmonic manifolds flatness follows in all
dimensions from  \cite{RS.02b}.

\begin{Thm}\label{flat1}  \cite{RS.02b}
  $(M, g)$ be a complete and simply connected Riemannian manifold. 
 If $M$ is a  non-compact harmonic  manifold  of polynomial volume  growth, then $M$ is flat. 
 \end{Thm}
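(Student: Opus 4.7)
The plan is to reduce to the case of minimal horospheres ($h=0$) using the volume-growth hypothesis, and then construct an isometry $F:M\to\R^n$ from the Busemann functions using the second-order harmonic identities.

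By Allamigeon's theorem, $M$ has no conjugate points, so $M$ is AH with some universal constant $h\ge 0$ equal to the mean curvature of every horosphere. Harmonicity makes the volume density $\Theta(r)$ depend only on $r$, and the mean curvature $\Theta'(r)/\Theta(r)$ of the geodesic sphere of radius $r$ converges to $h$ as $r\to\infty$; hence the exponential volume-growth rate of $M$ equals $h$, and the polynomial-growth hypothesis forces $h=0$.

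Fix $p\in M$ and an orthonormal basis $\{e_i\}_{i=1}^{n}$ of $T_pM$, and set $\phi_i:=b_{e_i}$. Each $\phi_i$ is harmonic with $\|\nabla\phi_i\|\equiv 1$ and $\nabla\phi_i(p)=-e_i$, so the Gram matrix $G_{ij}:=\langle\nabla\phi_i,\nabla\phi_j\rangle$ equals $\delta_{ij}$ at $p$. The crucial step is to propagate this identity globally. The polarized Bochner formula gives
\[
\Delta G_{ij}=2\,\langle\operatorname{Hess}\phi_i,\operatorname{Hess}\phi_j\rangle+2\,\operatorname{Ric}(\nabla\phi_i,\nabla\phi_j),
\]
and combining this with the finite-dimensionality of $\operatorname{span}\{b_v^2:v\in T_pM\}$ (the second-order rigidity of harmonic manifolds with minimal horospheres established in \cite{RS.02b}) together with a Liouville-type property valid for manifolds of polynomial volume growth forces each $G_{ij}$ to be a constant, hence equal to $\delta_{ij}$. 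Then $F(x):=(\phi_1(x),\dots,\phi_n(x))$ has orthonormal differential at every point, and by completeness of $M$ and simple connectedness of $\R^n$, $F$ is a global Riemannian isometry, so $M$ is flat.

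The main obstacle is the pointwise identity $G_{ij}\equiv\delta_{ij}$: one must promote the trace condition $\Delta\phi_i=0$ to full off-diagonal Hessian-level control of the pair $(\phi_i,\phi_j)$. This is where harmonicity of $M$ (and not merely the AH condition) is essential, because the argument ultimately rests on the second-order $b_v^2$-span identities of \cite{RS.02b}, which are a consequence of the mean-value property and have no analogue in the purely AH setting. The strengthening announced in the abstract is precisely to replace this second-order step by a first-order argument that $\operatorname{span}\{b_v:v\in T_pM\}$ is already finite-dimensional of dimension $n$, which would then subsume the present theorem even without harmonicity.
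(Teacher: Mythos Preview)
Your proposal has a genuine gap at precisely the step you flag as the main obstacle. You assert that the polarized Bochner identity
\[
\Delta G_{ij}=2\,\langle\operatorname{Hess}\phi_i,\operatorname{Hess}\phi_j\rangle+2\,\operatorname{Ric}(\nabla\phi_i,\nabla\phi_j)
\]
combined with finite-dimensionality of $\operatorname{span}\{b_v^{2}\}$ and a Liouville property ``forces each $G_{ij}$ to be a constant''. But none of these ingredients gives sign control on $\Delta G_{ij}$ for $i\neq j$, so the Liouville/Strong Liouville machinery does not apply to $G_{ij}$ directly. Nor does the finite-dimensionality of $\operatorname{span}\{b_v^{2}:v\in T_pM\}$ help: with the convention $b_v=\|v\|\,b_{v/\|v\|}$, this span equals $\operatorname{span}\{b_u^{2}:u\in S_pM\}$, and the mixed product $\phi_i\phi_j$ is \emph{not} known to lie in it. Indeed, $(\phi_i+\phi_j)^2$ would place $\phi_i\phi_j$ there only if $b_{e_i}+b_{e_j}=b_{e_i+e_j}$, which is equivalent to the very identity $G_{ij}\equiv\delta_{ij}$ you are trying to prove. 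So the argument, as written, is circular at its key step.

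The proof in \cite{RS.02b} (sketched later in the paper as Theorem~\ref{sketch}) proceeds quite differently and does \emph{not} attempt to build the isometry $F$ directly. One averages $b_v^{2}$ over $S_pM$ to produce, for each geodesic $\gamma$, a real function $g_\gamma$ on $\R$; finite-dimensionality of $\operatorname{span}\{b_v^{2}\}$ then forces $\operatorname{span}\{g_\gamma\}$ to be finite-dimensional, so the generator $g$ is a trigonometric polynomial. A second radial family $\mu_\gamma$ (generalizing $r\cos\theta$) is introduced and related to $g$; both turn out to be almost periodic, and the characteristic property of almost periodic functions is used to conclude that $M$ is Ricci flat, hence flat. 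The ``first-order'' route via $F$ that you outline is exactly what the present paper supplies for AHM (via Killing fields and Theorem~\ref{bus}), but it is not how \cite{RS.02b} argues, and your sketch does not provide an independent mechanism to close the $G_{ij}$ step.
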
 

\begin{Cor} \label{growth}  \cite{N.05}
 A  non-compact harmonic  manifold  either  has polynomial  volume  growth  
or  of  exponential  volume  growth.  Consequently,  if  $M$  is  HM,
then  $M$  is  flat.   
\end{Cor}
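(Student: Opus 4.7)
My plan is to establish the volume-growth dichotomy first, and then deduce the HM flatness as a short corollary. Since $M$ is harmonic, the volume density $\Theta(r)$ is a radial function, and the mean curvature $\sigma(r) = \Theta'(r)/\Theta(r)$ of geodesic spheres likewise depends only on $r$. Volume growth is therefore controlled by $\int_0^R \Theta(r)\,dr$, so it suffices to study $\Theta$. Because $M$ is non-compact harmonic, by Allamigeon's theorem it has no conjugate points, so the radial Riccati equation is valid on all of $M$ and standard monotonicity produces a well-defined limit $h := \lim_{r\to\infty}\sigma(r) \ge 0$, which equals the mean curvature of the horospheres of $M$ (in particular $M$ is AH).

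The case $h>0$ is easy: for every $\ep>0$ one has $\sigma(r)>h-\ep$ on a half-line, so $\log\Theta(r)\ge(h-\ep)r-C$ and the volume growth is exponential. The genuine work is in the case $h=0$, where the task is to upgrade the qualitative fact $\sigma(r)\to 0$ to a quantitative polynomial bound on $\Theta$. Here I would invoke the full strength of harmonicity: not only $\tr U(r)$ but every elementary symmetric function of the shape operator $U(r)$ of the geodesic sphere is radial. Coupling these radial coefficients with the matrix Riccati equation $U'(r)+U(r)^2+R(r)=0$ yields a scalar ODE for $\sigma(r)$ whose asymptotic analysis forces the quantitative rate $\sigma(r)=O(1/r)$. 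Integrating this bound then gives $\log\Theta(r)=O(\log r)$, i.e., $\Theta(r)$ is polynomially bounded, and so is the volume of balls in $M$.

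Once the dichotomy is in place, the flatness statement is immediate: an HM manifold is by definition harmonic with minimal horospheres, i.e., $h=0$, so by the dichotomy its volume growth is polynomial, and Theorem \ref{flat1} concludes that $M$ is flat.

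The main obstacle is the quantitative bootstrap in the $h=0$ case: the convergence $\sigma(r)\to 0$ alone does not rule out intermediate growth such as $e^{\sqrt r}$, and what closes this gap is precisely the harmonicity hypothesis, which constrains the shape operator far more rigidly than mere asymptotic harmonicity. All other steps are essentially standard Riccati and comparison manipulations.
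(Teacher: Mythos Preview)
Your proposal follows the original route of \cite{N.05}: establish the polynomial/exponential dichotomy for harmonic manifolds directly via the asymptotics of $\sigma(r)=\Theta'(r)/\Theta(r)$, and then feed the polynomial case into Theorem~\ref{flat1} to get flatness of HM. The present paper does not prove Corollary~\ref{growth} this way; it is simply quoted from \cite{N.05}. The paper's own argument, given later as Corollary~6.1 for the larger class of AH manifolds, \emph{reverses the logic}: it first proves that any AHM is flat (the main Theorem~\ref{f}, obtained via the Strong Liouville Type Property and the analysis of Killing fields), and only then observes that $h=0$ forces flatness and hence polynomial growth, while $h>0$ trivially gives exponential growth. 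So in the paper the dichotomy is a \emph{consequence} of flatness, not a route to it.

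Both approaches are valid, but they buy different things. Your approach stays inside the harmonic category and needs the hard analytic step you flag yourself: upgrading $\sigma(r)\to 0$ to $\sigma(r)=O(1/r)$, which is exactly where harmonicity (Einstein condition, radiality of all symmetric functions of $U(r)$) is used in \cite{N.05}. Your sketch of this step is honest but thin; you would still have to exhibit the scalar ODE and carry out the asymptotic analysis. The paper's approach sidesteps this ODE work entirely, applies to the broader AH class, and makes the dichotomy a one-line corollary---at the cost of the substantial machinery of \S\S3--5. For the specific task of proving Corollary~\ref{growth} as stated (for harmonic manifolds), your outline is correct and closer to the cited source; it is just not how this paper arrives at the result.
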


In  case  of HM,  it  was  shown  that    \cite{RS.02b},  the  vector  spaces  $V =  span\{b_{v} | v  \in S_{p}M \}$  
and  $W =  span\{b_{v}^2 | v \in S_{p}M \}$   are  finite  dimensional,  where  $b_v = b_{v}^{+}$ is  Busemann function for $v$. 
Then  averaging  ${b_v}^2$   (idea which can be employed only for harmonic manifolds), 
 it  was  shown  that  HM  is  Ricci  flat  and  hence  flat.  Thus,  the  flatness  Theorem  \ref{flat1}  of \cite{RS.02b}  was  proved
by  using  finite  dimensionality  of  $W$.  Hence,  we  term  this  as {\it  second order flatness}.  See  Theorem  \ref{sketch}
 of  $\S 6$  for  a   sketch  of  proof  of  Theorem  \ref{flat1}  of   \cite{RS.02b}.
 Note  that to prove flatness of  HM, in  \cite{RS.02b},  the  natural  fact  that  $V$  is finite dimensional  was  not explored.  \\
 
 In  this  paper, we  show  that in an AHM  $(M^n,g)$, dim $V = n$ and  consequently, $M$  is  flat.
  Thus,  proving  the flatness  of  AHM  by  the  strongest  flatness criterion and  also  strengthening  the  flatness result,
 Theorem \ref{flat1} of  \cite{RS.02b}.   We  term  this  as {\it  first order flatness}  of $M$. \\
 
 The  main  result  of  our  paper  is  Theorem \ref{f}.
 
\begin{Thm}\label{f}
 $(M^n,g)$ be  an AHM  with $\{{e_i}\}$  an orthonormal  basis  of  $T_{p}M$  and   $\{b_{e_{i}}\}$,
 the corresponding  Busemann  functions  on $M$.   Then
 \begin{itemize}
\item[(1)] The  vector  space  $V =  span\{b_{v} | v \in T_{p}M \}$
is  finite  dimensional  and  dim  $V  = $  dim $M = n$.   
\item[(2)]  $\{\nabla b_{e_i}(p) \}$  is  a  global  parallel  orthonormal basis  of  $T_{p}M$  
for  any  $p \in  M$.   Thus,  $M$  is  a  parallizable  manifold. 
\item[(3)]
$F  :  M  \rightarrow  {\R}^n$ defined by
$F(x)  =  (b_{e_1}(x),  b_{e_{2}}(x),  \cdots, b_{e_{n}}(x)),$
is  an  isometry   and  therefore, $M$  is  flat. 
\end{itemize}
\end{Thm}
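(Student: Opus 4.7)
My plan is to deduce the whole theorem from a single pointwise fact: $\mathrm{Hess}\,b_v\equiv 0$ for every Busemann function. Granting this, $\nabla b_v$ is a globally parallel unit vector field; choosing an orthonormal basis $\{e_i\}$ of $T_pM$ yields a global parallel orthonormal frame $\{\nabla b_{e_i}\}$ (orthonormality at the single point $p$, where $\nabla b_{e_i}(p)=-e_i$, propagates by parallelism). The differential $dF$ then sends an orthonormal basis to an orthonormal basis at every point, so $F$ is a local isometry; since $M$ is complete and simply connected, $F$ is a covering map onto the simply connected $\R^n$, hence a global isometry. Part~(1) then follows as well: in the flat model every $b_v$ is a linear functional in the coordinates $(b_{e_1},\dots,b_{e_n})$, and these $n$ Busemann functions are linearly independent by evaluating gradients at $p$.

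To prove $\mathrm{Hess}\,b_v\equiv 0$ I would first set up the Bochner--Weitzenb\"ock identity for $f=b_v$. Elliptic regularity makes $b_v$ smooth, the no-conjugate-point hypothesis gives $|\nabla f|\equiv 1$, and $\Delta f\equiv 0$ by the AH assumption with $h=0$; the usual identity then collapses to
\[
|\mathrm{Hess}\,b_v|^2+\Ric(\nabla b_v,\nabla b_v)=0.
\]
Along $\gamma_v$, the shape operator $U(t)=\mathrm{Hess}\,b_v\,|_{\gamma_v'(t)^\perp}$ satisfies the Riccati equation $U'+U^2+R=0$ with the constraint $\tr U\equiv h=0$; combined, these give $\tr(U^2)=-\Ric(\gamma_v',\gamma_v')$, a Bochner-consistent identity that by itself cannot force $U\equiv 0$.

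The missing global ingredient is subexponential volume growth, which I would first extract from $h=0$: the mean curvature $(\log\Theta)'(v,r)$ of geodesic spheres converges to $h=0$ as $r\to\infty$, so $\Vol\,B(p,R)=e^{o(R)}$. Now exploit the flow $\phi_t$ of $\nabla b_v$. Since $\operatorname{div}\nabla b_v=\Delta b_v=0$, $\phi_t$ is volume preserving, and its transverse derivative along $\gamma_v$ is exactly the Jacobi tensor $A(t)$ with $A'=UA$, $A(0)=\Id$. If $U(t_0)$ has a positive eigenvalue $\lambda>0$ (so, by $\tr U=0$, also a negative one in a different direction), then a continuity and Riccati-comparison argument yields exponential expansion $|A(t_0+T)e|\ge e^{cT}|A(t_0)e|$ in the eigendirection. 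A tube-and-thickening argument should then package a set of volume $\gtrsim\epsilon\,e^{cT}$ inside a geodesic ball of radius $T+O(1)$, contradicting subexponential growth; hence $U\equiv 0$, and the reduction above completes the proof.

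The main obstacle is carrying out this last comparison rigorously. Without any sign assumption on curvature, one must (i) propagate a positive eigenvalue of $U$ along a whole segment of $\gamma_v$ of length $T$ without it being cancelled by the negative eigenvalue forced by the zero-trace constraint, (ii) convert the exponential Jacobi expansion in an infinitesimal horospheric direction into an \emph{extrinsic} volume bound in $M$ by thickening transversely while controlling the spatial drift of horospheres under $\phi_t$, and (iii) ensure the resulting region actually sits inside a ball of radius linear in $T$. These three difficulties are precisely what the generalized statement promised in the abstract --- infinite injectivity radius plus subexponential volume growth implies first-order flatness --- must overcome, and are the heart of the proof.
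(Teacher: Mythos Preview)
Your route diverges from the paper's at the key step, and the gap you flag at the end is structural rather than merely technical. The paper does not attack $\mathrm{Hess}\,b_v\equiv 0$ directly. Instead it (a) proves a Strong Liouville Type Property on an AHM (subharmonic and bounded above $\Rightarrow$ constant), via an integral formula for the gradient of spherical means together with subexponential volume growth; (b) notes that for any Killing field $X$ the function $\tfrac12\|X\|^2$ is subharmonic because $\Ric\le 0$ on an AHM, so the Liouville property forces $\|X\|$ constant and hence $X$ parallel; (c) argues that a non-trivial Killing field exists, and shows by a maximum-principle comparison that any unit parallel field equals some $\nabla b_v$; (d) from this one parallel $\nabla b_v$ and the parallelism of its orthogonal distribution, produces the full parallel frame $\{\nabla b_{e_i}\}$, whence $\dim V=n$ and the isometry $F$ follow by exactly the reduction you wrote down. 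The subexponential growth is thus spent on a Liouville theorem applied to $\|X\|^2$, not on a Jacobi/tube estimate.

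Your tube argument, as written, cannot manufacture exponential volume. Since $\Delta b_v=0$, the transverse Jacobi tensor has $\det A(t)\equiv 1$ (the paper computes $\|J_1\wedge\cdots\wedge J_{n-1}\|(t)=e^{-ht}$ along the flow of $\nabla b_v$), so that flow is volume preserving and any flowed tube keeps constant volume: exponential stretching in one eigendirection of $U$ is exactly compensated by contraction in another. To salvage volume $\gtrsim e^{cT}$ inside $B(p,T+O(1))$ you would need an embedded $\delta$-tube about a horospheric curve of length $\asymp e^{cT}$, but nothing prevents that curve from being $\delta$-recurrent in the ball, and there is no injectivity-radius control on horospheres without curvature bounds. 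Likewise your step~(i), propagating a single positive eigenvalue of $U$ for time $T$, has no Riccati comparison to lean on when $R(t)$ is unconstrained. These are not details to be filled in but the absence of a mechanism; the paper's Killing-field detour is precisely what replaces it.
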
    
 
 One  of the important  steps in  proving  main  Theorem  \ref{f}  is,
Strong Liouville Type Property, that is any  subharmonic  function  bounded  from above  is  constant on an AHM, is  proved  in $\S 3$.  
In fact, in $\S 3$ we  show the more generalized result that, if $(M,g)$ is a
non-compact, complete, connected manifold of infinite injectivity radius and of subexponential volume growth, then $(M,g)$ satisfies Strong Liouville Type Property, and hence Liouville Property. We also show that it satisfies $L^1$ Liouville Property too.
To prove Strong Liouville Type Property, we derive an integral formula for the derivative of mean value of a $C^1$ function. See $\S 3$ for more details.\\    
Using  results  of  $\S 3$, the  strongest  result of the paper, that every non-trivial  Killing field  on  AHM  is  parallel, is proved in $\S 4$.  
In  $\S 5$,  it  is  established  that  any  parallel unit vector  field on  AHM is  Killing of  the  form  $\nabla b_{v},$   $v \in  SM$.
This  in  turn  proves  Theorem  \ref{f}  in $\S 5$. 
$\S 6$ is the concluding section of the paper, where  we  note  all  consequences  of  flatness. We also conclude the final more generalized result of this paper viz. :
\begin{Thm}\label{ffs}
If $(M,g)$ is a non-compact, complete, connected manifold of infinite injectivity
radius and subexponential volume growth, then $(M,g)$ is first order flat. 
\end{Thm}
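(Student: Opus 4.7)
The approach is to rerun the argument of Theorem \ref{f} after observing that its two structural inputs---(i) a full family of $C^2$ Busemann functions arising from simply connectedness together with absence of conjugate points, and (ii) the Strong Liouville Type Property---are both available under the weaker hypotheses of Theorem \ref{ffs}. Infinite injectivity radius forces $\exp_p\colon T_pM\to M$ to be a diffeomorphism for every $p$, so $M$ is diffeomorphic to $\R^n$, simply connected, and without conjugate points; hence every $v\in SM$ produces a bi-infinite line $\gamma_v$, and the Busemann functions $b_v^{\pm}$ are $1$-Lipschitz of class $C^2$ by the standard no-conjugate-point regularity. Subexponential volume growth is precisely the analytic hypothesis under which \S 3 establishes the Strong Liouville Type Property (and the $L^1$-Liouville property), as announced in the introduction.

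The heart of the generalization is to extract $\Delta b_v\equiv 0$ for every $v$ from subexponential growth alone, rather than from the AH assumption $\Delta b_v\equiv h$ with $h=0$. My plan is to feed the integral formula of \S 3 for the derivative of the mean value of a $C^1$ function into $\Delta b_v$: the divergence theorem rewrites the mean value of $\Delta b_v$ on $B_r(p)$ as the boundary flux $\int_{\d B_r(p)}\langle\nabla b_v,\nu\rangle$, which is bounded by $\Vol(\d B_r(p))$; combined with $\log\Vol(B_r(p))/r\to 0$ this forces the spherical averages of $\Delta b_v$ to vanish on arbitrarily large scales. The Riccati equation along $\gamma_v$ yields monotonicity/subharmonicity of $\Delta b_v$, and the Strong Liouville Type Property upgrades the vanishing of averages to pointwise $\Delta b_v\equiv 0$. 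This is the main obstacle, since in the AHM case constancy of $\Delta b_v$ is given, whereas here it must be recovered from a purely volumetric hypothesis.

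Once $\Delta b_v\equiv 0$ is established, the remainder transplants from \S\S 4--5 with only cosmetic changes. Applying Bochner's identity $\tfrac12\Delta|X|^2=|\nabla X|^2-\Ric(X,X)$ to a Killing field $X$, using the Riccati-based sign control on $\Ric$ along lines and the Strong Liouville Type Property, forces $|X|^2$ constant and hence $X$ parallel, so every Killing field on $M$ is parallel. The Bochner formula applied to the unit gradient $|\nabla b_v|\equiv 1$ of the now-harmonic $b_v$ gives $|\operatorname{Hess} b_v|^2+\Ric(\nabla b_v,\nabla b_v)\equiv 0$, so an application of Strong Liouville to the subharmonic $|\operatorname{Hess} b_v|^2$ forces $\operatorname{Hess} b_v\equiv 0$, making $\nabla b_v$ a parallel (hence Killing) vector field. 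Choosing an orthonormal basis $\{e_i\}$ of $T_pM$ produces the global parallel orthonormal frame $\{\nabla b_{e_i}\}$; the map $F\colon M\to\R^n$, $F(x)=(b_{e_1}(x),\dots,b_{e_n}(x))$, then has orthogonal differential everywhere and so is a local isometry, which completeness together with infinite injectivity radius promotes to a global isometry onto $\R^n$. This delivers first-order flatness in the sense of Theorem \ref{f}.
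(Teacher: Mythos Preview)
Your identification of the central obstacle is correct: under the bare hypotheses of Theorem~\ref{ffs} one must manufacture $\Delta b_v\equiv 0$ from subexponential growth, whereas for an AHM it is given. But your plan for this step does not go through. First, the regularity claim is wrong: on manifolds without conjugate points Busemann functions are only $C^{1}$ in general (Eschenburg \cite{E.77}, invoked by the paper itself in the proof of Theorem~\ref{bus}); the smoothness in Proposition~\ref{smooth} comes from elliptic regularity applied to the equation $\Delta b_v=h$, which is exactly the AH input you no longer have. So $\Delta b_v$, $\operatorname{Hess} b_v$, and the Riccati tensor are not classically defined here. Second, even granting regularity, the assertion that ``the Riccati equation yields monotonicity/subharmonicity of $\Delta b_v$'' is unsubstantiated: tracing \eqref{eq:ricatti} produces the ODE $(\tr u^+)'+\tr(u^+)^2+\Ric(\gamma',\gamma')=0$ along each individual ray, which says nothing about $\Delta(\Delta b_v)$ on $M$ and gives no monotonicity without a sign on $\Ric$. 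Likewise, the ``Riccati-based sign control on $\Ric$'' you invoke for the Killing and Bochner steps is the identity $\Ric(v,v)=-\tr(u^+)^2\le 0$ of Proposition~\ref{non-pos}, which holds only after one knows $(\tr u^+)'=0$, i.e.\ precisely in the AH case. Your later steps therefore rest on an unproved curvature sign.

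The obstruction is in fact structural: the surface of revolution $(\R^2,\,dt^2+f(t)^2\,d\theta^2)$ with $f(t)=t+t^3$ is a Hadamard manifold (hence has infinite injectivity radius), has polynomial volume growth, and has Gauss curvature $K=-f''/f=-6/(1+t^2)<0$ everywhere, so it is not flat. It satisfies every hypothesis of Theorem~\ref{ffs} yet is not first order flat, so no correct proof can exist as stated. The paper's own ``proof'' is the one-sentence assertion in \S6.1 that the arguments of \S\S3--5 carry over verbatim; your more explicit attempt has the merit of exposing exactly where that assertion breaks---at the passage from subexponential growth to $\Delta b_v\equiv 0$ and to $\Ric\le 0$.
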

 \noindent
 The proof  of  the result that, if
$M$  is  an AHM  admitting  compact  quotient,  then 
$M$  must be flat, is included in  Appendix, $\S 7$, the
last section of  this  paper. We begin by analyzing Busemann functions on AHM's in
$\S2$.\\

\indent
Finally,  we  note that as any non-compact harmonic manifold is AH  \cite{RS.03},  we recover Theorem  \ref{flat1}
from  our main Theorem \ref{f}. \\

We  now describe  general  preliminaries  on  an  AHM  which  will be  used  throughout  the paper. \\
\indent
In  the  sequel,  the  term  AH manifold $M$  tacitly means simply  connected,  complete  Riemannian  manifold
without  conjugate  points.  Therefore, ${b_v}^{\pm}$ are smooth functions on $M$ for all $v$ and all horospheres of $M$
are smooth, simply connected hypersurfaces in $M$ with constant mean curvature $h\geq 0$.\\
Hence, on  an   AH   manifold  $M$ we  can define  $(1,1)$  tensor  fields $u^{+}$ and  $u^{-}$   as  follows: 
For $v \in SM$ and $x \in v^{\perp},$ let
$$ u^{+}(v)(x) = \nabla_x \nabla b_{-v}^{+} \ \ \ \mbox{and}\ \ \
u^{-}(v)(x) = - \nabla_x \nabla b_{v}^{+}.$$ Thus, $u^{\pm}(v) \in
\mbox{End} \;(v^{\perp})$ and $\tr u^{+}(v)  =  \Delta{ b^+_{-v}}= h$,
$\tr u^{-}(v)  = - \Delta{ b^+_{v}} = - h$.  $u^{+}(v),  u^{-}(v)$,  respectively, is the second fundamental form of the
{\it unstable and stable horospheres} ${{b^-_v}}^{-1}(-c),  {{b^+  _v}}^{-1}(c)$, $c \in  \R$, respectively, at $p$.  \\
Moreover, the endomorphism fields $u^{\pm}$ satisfy the Riccati equation along the orbits
of the geodesic flow ${\phi}^{t} : SM \rightarrow SM$.
Thus, if $u^{\pm}(t):=u^{\pm}(\phi^tv)$ and   the  Jacobi  operator,
$R(t):=R(\cdot,\gamma_v'(t))\gamma_v'(t)\in\mbox{End}(\gamma_v'(t)^{\perp}),$
then
\begin{equation}\label{eq:ricatti}
(u^{\pm})'(t) + (u^{\pm})^2(t) + R(t) = 0.
\end{equation} $u^{+}(t)$  and  $u^{-}(t)$   are  called  as  {\it unstable  and  stable}  Riccati solution respectively.\\

Let $(M,g)$ be a complete Riemannian manifold.
Let $\Theta_{x}(u, r)$ be the density of the volume form of
$M$ in normal coordinates centered at some point $x$; so the volume form reads
${dV}_{M} = \Theta_{x}(u, r){dV}_{S_{x}M}dr$ where ${dV}_{S_{x}M}$ is the volume form of 
${S_{x}M}$, where ${dV}_{S_{x}M}$ is the volume form of ${S_{x}M}$.\\\\
\noindent
The function $\Theta_{x}$ is related to the mean curvature $h_x$ of spheres centered at
$x$ and radius $r$ by the formula 

\begin{eqnarray}\label{hor}
\frac{\Theta_{x}'(u, r)}{\Theta_{x}(u, r)} = h_{x}(exp_{x} (ru)),
\end{eqnarray} 
where ${\Theta_{x}'(u, r)}$ denotes the derivative of ${\Theta_{x}(u, r)}$ with respect to r.
In what follows, we shall often write for short the point $\exp_{x}(ru)$ as $(u, r)$
to avoid cumbersome notations; moreover, we will regard ${\Theta_{x}(u, r)}$ as a function on $SM \times \R$.\\\\
If $\gamma_{v}$ is a  geodesic in $M$ with $\gamma_{v}(0) =p, \gamma_{v}'(0)=v,$
in an AH, then it follows that  
$\Delta b_{v}^{\pm}= \displaystyle\lim_{r \rightarrow \infty} \frac{\Theta_{p}'(v,r)}{ \Theta_{p}(v,r)} = const = h \geq 0$,  mean curvature of  horospheres of $M$,
viz., ${b_{v}^{\pm}}^{-1}(t), t \in \R$.

\begin{Pro}\label{non-pos}
If  $(M,g)$  is  an  AHM,  then  $Ricci_{M}  \leq  0$.
\end{Pro}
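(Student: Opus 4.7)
The plan is to extract the pointwise bound on Ricci curvature directly from the Riccati equation \eqref{eq:ricatti} using the defining property of AHM, namely $h=0$. All of the analytic machinery of $\S 3$--$\S 5$ is not needed; this is a purely infinitesimal computation.

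Fix $v\in SM$ and consider either Riccati solution $u^\pm(t)$ along $\gamma_v$. Since $u^\pm(v)$ is defined as a Hessian of a Busemann function restricted to $v^\perp$, it is a self-adjoint endomorphism of $\gamma_v'(t)^\perp$; in particular $(u^\pm(t))^2$ is positive semi-definite, so $\tr(u^\pm(t))^2\ge 0$ with equality only when $u^\pm(t)=0$. Next, by the defining property of an AH manifold, $\tr u^+(t)\equiv h$ and $\tr u^-(t)\equiv -h$, and the AHM hypothesis $h=0$ makes both traces vanish identically. Differentiating in $t$ then gives $(\tr u^\pm)'(t)=0$.

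Taking the trace of the Riccati equation \eqref{eq:ricatti} and using that trace commutes with differentiation along the flow, one obtains
\begin{equation*}
0\;=\;(\tr u^\pm)'(t)+\tr(u^\pm(t))^2+\tr R(t)
\;=\;\tr(u^\pm(t))^2+\Ric(\gamma_v'(t),\gamma_v'(t)),
\end{equation*}
where we used $\tr R(t)=\Ric(\gamma_v'(t),\gamma_v'(t))$. Hence
\begin{equation*}
\Ric(\gamma_v'(t),\gamma_v'(t))\;=\;-\tr(u^\pm(t))^2\;\le\;0.
\end{equation*}
Evaluating at $t=0$ shows $\Ric_p(v,v)\le 0$, and since $p\in M$ and $v\in S_pM$ were arbitrary, $\Ric_M\le 0$.

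There is essentially no obstacle here: the argument is a one-line trace of the Riccati equation, the only input being $h=0$ and the self-adjointness of $u^\pm$. It is worth recording for later use that equality $\Ric(v,v)=0$ forces $u^\pm=0$ along the whole geodesic $\gamma_v$, which will be the seed for the stronger flatness statements pursued in the subsequent sections.
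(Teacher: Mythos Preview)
Your proof is correct and follows the same approach as the paper: trace the Riccati equation, use that $\tr u^+$ is constant so its derivative vanishes, and conclude from $\tr(u^+)^2\ge 0$ that $\Ric(v,v)=-\tr(u^+)^2\le 0$. Your version is simply more explicit about why $(\tr u^\pm)'=0$ and why $\tr(u^\pm)^2\ge 0$, and your closing remark on the equality case is a useful addition not present in the paper.
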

\begin{proof}
From  Riccati  equation we get,
\begin{equation}\label{eq:ricatti}
(u^{\pm})'(t) + (u^{\pm})^2(t) + R(t) = 0.
\end{equation} 
 Hence,  $\tr(u^{+})^2 +\tr R = 0.$
 This  implies  that
 $Ricci(v,v)  \leq  0, \; \mbox{for  any} \; v  \in  SM.$
 \end{proof}
 
\section{Busemann functions on AHM's}
In this section we describe the behaviour of Busemann functions on AHM's. 
We show that on an  AHM, {\it asymptotic} geodesics \cite{P.98} are{ \it bi-asymptotic}.
Using Liouville Property (Corollary \ref{Lio1}, proved in $\S 3$), we conclude that any two geodesics of AHM having finite Hausdorff distance towards $+\infty$ are bi-asymptotic. Using standard techniques, we prove that in an AHM there exists bounded strip in all directions. We also describe ideal boundary of an AH manifolds. \\

If $(M,g)$ is an AH, then by Definition \ref{ah} all of its Busemann functions satisfy 
$\Delta {b_v}^{\pm}  \equiv h,\; \forall v\in SM$,  where  $h$ is a nonnegative constant. 

\begin{Pro}\label{smooth}
If $(M,g)$ is an AH manifold, then all of its Busemann functions are smooth. 
In particular, if $(M,g)$ is a real analytic manifold, then Busemann functions are
real analytic. Consequently, if $(M,g)$ is a harmonic manifold, then all of its Busemann functions are real analytic.
\end{Pro}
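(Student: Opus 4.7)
The plan is to turn the Busemann function into a solution of a linear elliptic PDE and then invoke regularity theorems tailored to the smoothness class of the metric. The AH hypothesis is exactly what delivers the PDE: by Definition \ref{ah} one has $\Delta b_v^\pm\equiv h$, where $h$ is a nonnegative constant.

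First I would recall that on any complete simply connected manifold without conjugate points, every Busemann function $b_v^\pm$ is $1$-Lipschitz (it is a uniform limit of $1$-Lipschitz distance functions), and moreover $C^1$ with unit gradient, because the absence of conjugate points forces the asymptote from each point $x$ to the relevant end of $\gamma_v$ to be unique, and $\nabla b_v^\pm(x)$ is (minus) that asymptote's initial tangent. This step supplies enough a priori regularity that $b_v^\pm$ can be interpreted as a weak (distributional) solution of $\Delta_g u = h$ on $M$. One has to justify carefully that the classical identification of $\Delta b_v^\pm$ with the mean curvature of the horospheres in fact coincides with the distributional Laplacian of $b_v^\pm$; this is standard and uses $|\nabla b_v^\pm|\equiv 1$ together with Stokes' theorem on sub-level sets.

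Next I would apply interior elliptic regularity for the Laplace–Beltrami operator. The operator $\Delta_g$ has smooth coefficients in any smooth chart, and the right hand side is the constant $h$, so the Schauder/$L^p$ bootstrap gives $b_v^\pm\in C^\infty(M)$. For the analytic statement the argument is identical but uses the analytic elliptic regularity theorem of Morrey–Nirenberg: if $g$ is real analytic then $\Delta_g$ has real-analytic coefficients, and any classical solution of $\Delta_g u = h$ is then real analytic. This handles the first two assertions of the proposition.

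For the harmonic case, the remaining input is that any harmonic manifold is itself real analytic. A harmonic manifold without conjugate points is Einstein (its Ricci tensor is a constant multiple of $g$, as can be read off from the fact that the density $\Theta_p(u,r)$ is radial together with the Riccati equation \eqref{eq:ricatti}), and by the classical theorem of DeTurck–Kazdan an Einstein metric is real analytic in harmonic coordinates. Combining this with the previous paragraph yields real analyticity of every $b_v^\pm$ on a harmonic manifold. The only genuine obstacle is the very first step, namely obtaining enough baseline regularity of $b_v^\pm$ to feed into the PDE; once that is in place, everything else is a direct appeal to standard elliptic theory.
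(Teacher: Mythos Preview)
Your argument is correct and follows essentially the same route as the paper: both invoke the equation $\Delta b_v^\pm\equiv h$ and then appeal to interior elliptic regularity (the paper cites H\"ormander's Theorem~7.5.1 directly, you phrase it as the Schauder/$L^p$ bootstrap and Morrey--Nirenberg for the analytic case). The only differences are that you are more explicit about the baseline $C^1$ regularity needed to interpret $b_v^\pm$ as a genuine solution, and for the harmonic case you supply an argument (Einstein $\Rightarrow$ DeTurck--Kazdan) where the paper simply cites analyticity of harmonic manifolds as known from \cite{RS.03}.
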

\begin{proof}
We have by definition $\Delta {b_v}^{\pm}  \equiv h,\; \forall v\in SM$.
And Laplacian on $M$ is an elliptic operator with smooth coefficients. 
Then by regularity of elliptic partial differential equations, viz.,
H\"ormander's Theorem $7.5.1$ of \cite{H.69}, pg. $178$, we conclude that
${b_v}^{\pm}$ is a smooth function on $M$ for all $v$.\\
The conclusion for harmonic manifold follows, as they are analytic and AH \cite{RS.03}.
\end{proof}

Now we study  ideal boundary of an AH manifold. We begin with the definition of 
asymptote on AH manifolds.    

\begin{Def}
\begin{itemize}
\item[1)]{Asymptote :} Let $\gamma_v$ be a ray  in  a complete, non-compact 
Riemannian manifold $(M,g)$.  A vector
$w \in S_pM, p \in M$ is called an
asymptotic to $v$, if $\nabla b_{v,s_i}(p) \rightarrow w$ for 
some  sequence $s_i \rightarrow \infty$. Clearly, $\gamma_w$ is also 
a ray. We also say that $\gamma_w$ is 
asymptotic ray to $\gamma_v$ from $p$  towards  $+ \infty$. 
\item[2)]{ Bi-asymptote :} We say that two geodesics are
bi-asymptotic, if they are asymptotic to each other in both
 positive and negative directions. 
\item[3)] If all the asymptotic geodesics of $M$ are bi-asymptotic,
then we say that $M$ has  {\em bi-asymptotic property}. 
\end{itemize}
\end{Def}

The next proposition is an important result proved
in \cite{P.98}.

\begin{Pro}
  $(M,g)$ be a complete, non-compact 
Riemannian manifold. If ${\gamma_w}$ is an asymptote to
 $\gamma_v$ from $p$, then their
{\em Busemann functions} are related by

\begin{eqnarray}
 b_{v}(x)-b_{w}(x) \leq b_{v}(p).
 \label{e8}
\end{eqnarray}
\begin{eqnarray}
 b_{v}({\gamma_w}(t))& =& b_{v}(p) +
{b}_{w}({\gamma_w}(t)), \label{e9}\\
& =& b_{v}(p) - t.
\label{e10}
\end{eqnarray}
\end{Pro}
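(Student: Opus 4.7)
The plan is to establish \eqref{e10} first and then deduce \eqref{e9} and \eqref{e8} as easy corollaries. The central tool is the family of unit-speed minimizing geodesics $\sigma_i:[0,L_i]\to M$ from $p$ to $\gamma_v(s_i)$, where $L_i=d(p,\gamma_v(s_i))$. By the definition of asymptote (up to matching the sign convention in the definition of $\nabla b_{v,s_i}$), the $\sigma_i$ may be taken so that $\sigma_i\to\gamma_w$ uniformly on compact intervals; simultaneously, $L_i-s_i\to b_v(p)$ by the very definition of the Busemann function.

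For \eqref{e10}, fix $t>0$. For all $i$ large enough that $L_i>t$ and any $s>s_i$, the triangle inequality together with the minimality of $\sigma_i$ gives
\begin{equation*}
d(\sigma_i(t),\gamma_v(s))\le d(\sigma_i(t),\gamma_v(s_i))+d(\gamma_v(s_i),\gamma_v(s))=(L_i-t)+(s-s_i).
\end{equation*}
Subtracting $s$, letting $s\to\infty$, and then $i\to\infty$ (using continuity of $b_v$ and $\sigma_i(t)\to\gamma_w(t)$, $L_i-s_i\to b_v(p)$) produces $b_v(\gamma_w(t))\le b_v(p)-t$. The opposite inequality is immediate from $b_v$ being $1$-Lipschitz (as a uniform limit of $1$-Lipschitz distance-like functions): $b_v(\gamma_w(t))\ge b_v(p)-d(p,\gamma_w(t))=b_v(p)-t$.

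Once \eqref{e10} is in hand, \eqref{e9} is automatic, since $b_w(\gamma_w(t))=-t$ by the very definition of the Busemann function of the ray $\gamma_w$, which starts at $p$. For \eqref{e8}, apply the triangle inequality $d(x,\gamma_v(s))\le d(x,\gamma_w(t))+d(\gamma_w(t),\gamma_v(s))$; subtracting $s$ and sending $s\to\infty$ gives $b_v(x)\le d(x,\gamma_w(t))+b_v(\gamma_w(t))=d(x,\gamma_w(t))-t+b_v(p)$ by \eqref{e10}. Letting $t\to\infty$ and recognising the right-hand side (minus $b_v(p)$) as the defining sequence for $b_w(x)$ yields $b_v(x)-b_v(p)\le b_w(x)$.

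The only genuine subtlety is notational: one must verify compatibility between the sign convention used in the definition of asymptote (phrased via $\nabla b_{v,s_i}(p)\to w$) and the geometric statement $\sigma_i\to\gamma_w$; once that bookkeeping is settled, no hypothesis beyond the triangle inequality and the $1$-Lipschitz property of Busemann functions is required, and in particular the result holds without any assumption on conjugate points or on the asymptotic geometry of $M$.
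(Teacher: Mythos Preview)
Your proof is correct. The paper does not actually supply a proof of this proposition; it simply cites Petersen \cite{P.98} and moves on, so there is nothing to compare against directly. Your argument is essentially the standard one found in Petersen: approximate $b_v$ by the functions $d(\cdot,\gamma_v(s_i))-s_i$, use that the minimizing segments $\sigma_i$ from $p$ to $\gamma_v(s_i)$ converge to $\gamma_w$, and combine the triangle inequality with the $1$-Lipschitz property of $b_v$ to trap $b_v(\gamma_w(t))$ between $b_v(p)-t$ and itself. Your derivation of \eqref{e8} from \eqref{e10} via a second triangle inequality is also standard. The sign bookkeeping you flag is a genuine (if minor) issue in the paper's own exposition---compare the definition of asymptote via $\nabla b_{v,s_i}(p)\to w$ with the conclusion $\nabla b_v(p)=-w$ in Corollary~\ref{b1}---so you are right to note it rather than sweep it under the rug.
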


 Proof  of  Corollary  \ref{b1},  \ref{b2} and  \ref{b3}  which follows  for  AH manifolds,  
is  on  the  same  lines  as the  proof in case of harmonic manifolds  \cite{RS.03}. 

\begin{Cor}\label{b1}
 If $\gamma_v$ is a geodesic, then 
asymptote to $\gamma_v$ through $p \in M,$ AH,  is unique. 
\label{c8}
\end{Cor}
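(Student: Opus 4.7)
My plan is to exploit the smoothness of the Busemann function $b_v$ on an AH manifold, which was just established in Proposition \ref{smooth}, and reduce the uniqueness of the asymptote to the uniqueness of the integral curve of a smooth vector field.

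Concretely, here is what I would do. Since $b_v$ is smooth and $\|\nabla b_v\| \equiv 1$ on $M$ (the standard fact that Busemann functions on a manifold without conjugate points have unit gradient wherever they are differentiable), the vector field $-\nabla b_v$ is a smooth unit vector field on $M$. Let $\gamma_w$ be any asymptote to $\gamma_v$ from $p$, so that $\gamma_w(0)=p$ and $\gamma_w'(0)=w$. By formula \eqref{e10} of the preceding proposition we have
\begin{equation*}
b_v(\gamma_w(t)) \;=\; b_v(p) - t \qquad \text{for all } t\ge 0.
\end{equation*}
Differentiating both sides in $t$ gives $\langle \gamma_w'(t),\,\nabla b_v(\gamma_w(t))\rangle = -1$. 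Since both $\gamma_w'(t)$ and $\nabla b_v(\gamma_w(t))$ are unit vectors, the Cauchy--Schwarz equality case forces
\begin{equation*}
\gamma_w'(t) \;=\; -\nabla b_v(\gamma_w(t)),
\end{equation*}
so $\gamma_w$ is an integral curve of the smooth vector field $-\nabla b_v$ issuing from $p$. In particular $w=-\nabla b_v(p)$ is determined by $p$ and $v$ alone, and by the uniqueness theorem for ODEs with smooth coefficients the curve $\gamma_w$ itself is uniquely determined.

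The argument has essentially no obstacle once smoothness of $b_v$ is available; the only point that really needs the AH hypothesis is that Proposition \ref{smooth} is invoked to upgrade the merely $C^1$ Busemann function (which is all one gets on a general manifold without conjugate points) to a smooth one, so that Picard--Lindel\"of applies to $-\nabla b_v$. I would spell out the $\|\nabla b_v\|=1$ claim briefly (by taking a limit of the corresponding identity for the smooth distance approximants $b_{v,s}$, whose gradients are unit at points distinct from $\gamma_v(s)$), just to keep the differentiation step self-contained; everything else is a direct calculation from \eqref{e10}.
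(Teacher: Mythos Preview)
Your argument is correct and coincides with the paper's own proof: both differentiate \eqref{e10}, apply the equality case of Cauchy--Schwarz to the unit vectors $\gamma_w'(t)$ and $\nabla b_v(\gamma_w(t))$, and conclude that $w=-\nabla b_v(p)$ is uniquely determined. The appeal to Picard--Lindel\"of is slightly more than necessary---once the initial velocity $w$ is fixed, uniqueness of geodesics already gives uniqueness of $\gamma_w$---but this is a cosmetic difference only.
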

\begin{proof}
Let ${\gamma_w}$  be an asymptote to $\gamma_v$ from $p$.\\
Then,
$$ \left<\nabla b_{v}({\gamma_w}(t)),
{\gamma_w}'(t)\right> = 
\frac{d}{dt} ( b_{v}({\gamma_w}(t)) = -1 \;\;\mbox{by
(\ref{e10})}.$$
Since $\nabla b_{v}$ and ${\gamma_w}'(t)$ both are unit 
tangent vectors, we have
$$ \nabla b_{v}({\gamma_w}(t)) = - {\gamma_w}'(t).$$   
In particular,
$$  \nabla b_{v}(p)  = - {\gamma_w}'(0).$$
This implies that there is only one asymptotic geodesic
to $\gamma_v$
starting from $p$, namely in the
direction of  $\;-\nabla b_{v}(p).$
\end{proof}

\begin{Cor}\label{b2}
$(M,g)$ be an AH manifold.
Being asymptotic is an equivalence relation on $SM$ as
well as on the space of all oriented geodesics of $M$.
Moreover, two oriented geodesics 
are asymptotic if and only if the corresponding 
 Busemann functions agree upto an additive constant.
 \label{c9}
\end{Cor}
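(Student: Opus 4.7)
The plan is to prove the Busemann-function characterization first, and then derive the equivalence-relation assertion as a formal consequence.

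\textbf{Forward direction of the characterization.} Suppose $\gamma_w$ is asymptotic to $\gamma_v$ from $p := \gamma_w(0)$, and set $f := b_v - b_w$. Inequality (\ref{e8}) of the preceding Proposition from \cite{P.98} gives $f(x) \leq b_v(p)$ for every $x \in M$, while (\ref{e9}) gives $f(\gamma_w(t)) = b_v(p)$ for all $t$, so $f$ attains its supremum along the entire interior curve $\gamma_w$. Because $(M,g)$ is AH, Definition \ref{ah} and Proposition \ref{smooth} guarantee that $b_v$ and $b_w$ are smooth with $\Delta b_v = \Delta b_w = h$, so $f$ is harmonic on the connected manifold $M$. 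By the strong maximum principle for harmonic functions, $f \equiv b_v(p)$; hence $b_v$ and $b_w$ differ by the additive constant $b_v(p)$.

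\textbf{Reverse direction of the characterization.} Suppose $b_v - b_w$ is a constant on $M$. Then $\nabla b_v \equiv \nabla b_w$. At $p := \gamma_v(0)$, the computation in the proof of Corollary \ref{b1}, applied with $\gamma_v$ as its own asymptote from $p$, yields $\nabla b_v(p) = -v$, so $\nabla b_w(p) = -v$. By the uniqueness part of Corollary \ref{b1}, the asymptote to $\gamma_w$ from $p$ is the geodesic starting in direction $-\nabla b_w(p) = v$, which is exactly $\gamma_v$. Thus $\gamma_v$ is asymptotic to $\gamma_w$.

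\textbf{Equivalence relation.} With the characterization in hand, reflexivity ($b_v - b_v \equiv 0$), symmetry ($b_v - b_w \equiv c$ implies $b_w - b_v \equiv -c$), and transitivity (add the constants) are immediate, and they descend from $SM$ to the space of oriented geodesics because the characterization depends only on the oriented geodesic, not on a basepoint.

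The only substantive step is the forward direction, where one must upgrade the global inequality (\ref{e8}) to a global equality. The AH hypothesis is essential precisely here: it is what makes the difference $b_v - b_w$ harmonic rather than merely bounded, and so places the strong maximum principle at our disposal. I expect no further obstacles; once this harmonicity is exploited, the rest of the corollary is purely formal.
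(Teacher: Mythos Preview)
Your proof is correct and follows essentially the same approach as the paper: the forward direction uses the AH hypothesis to make $b_v - b_w$ harmonic and then applies the maximum principle to inequality (\ref{e8}), and the reverse direction uses equality of gradients together with Corollary~\ref{b1}. The only cosmetic difference is that in the reverse direction the paper evaluates along $\gamma_w$ to show $\gamma_w$ is an integral curve of $-\nabla b_v$ (hence asymptotic to $\gamma_v$), whereas you evaluate at $\gamma_v(0)$ to show $\gamma_v$ is the asymptote to $\gamma_w$; either variant works, and the equivalence relation is then deduced in the same formal way.
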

\begin{proof}                                     
 If $\gamma_w$
is the asymptotic geodesic asymptotic to ${\gamma_v}$
starting from $p$,  in  a non-compact AH  then, clearly $ b_{v}-b_{w}$  
is a harmonic function. By (\ref{e8}), it attains maximum
at $p$, and by maximum principle, it is a constant function. 
This implies that
$ b_{v}(x)-{b}_{w}(x)= b_{v}(p).$\\
\noindent
Conversely, suppose that 
 $ b_{v}(x)-{b}_{w}(x)= b_{v}(p).$
Therefore, 
\vspace{-0.05in}
\begin{eqnarray*}
\nabla b_v(\gamma_w(t)) & = & \nabla b_w(\gamma_w(t))\\
&=& -{\gamma_w}'(t).
\end{eqnarray*}
Thus, $\gamma_w$ is an integral curve of  $-\nabla b_v$.
But, from the Corollary \ref{c8} asymptotes are the integral 
curves of $-\nabla b_v$. Hence, we conclude that $\gamma_w$
is asymptotic to $\gamma_v$.
Consequently, it follows that being
asymptotic is an equivalence relation on $SM$. 
\end{proof}

\begin{Lem}\label{lem:bus}
 If  $\gamma_v$ is a geodesic line  in  an  AHM $M$,  then  $b_{v}^{+} + b_{v}^{-}= 0$.
\end{Lem}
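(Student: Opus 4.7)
The plan is to set $f := b_v^+ + b_v^-$ and show that $f$ is a non-negative harmonic function that vanishes along $\gamma_v$, from which $f \equiv 0$ follows by a maximum-principle argument.

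First, I would establish harmonicity: since $M$ is AHM we have $h = 0$, and by Definition \ref{ah} both $\Delta b_v^+ \equiv 0$ and $\Delta b_v^- \equiv 0$, so $\Delta f \equiv 0$ on $M$. By Proposition \ref{smooth}, $f$ is smooth. Next, I would obtain the lower bound $f \geq 0$ via the triangle inequality. Since $M$ has no conjugate points and $\gamma_v$ is a line, $d(\gamma_v(-t), \gamma_v(t)) = 2t$ for $t > 0$, so
\begin{equation*}
d(x, \gamma_v(t)) + d(x, \gamma_v(-t)) \;\geq\; d(\gamma_v(-t), \gamma_v(t)) \;=\; 2t.
\end{equation*}
Rewriting $b_v^-(x) = \lim_{t \to \infty} d(x,\gamma_v(-t)) - t$ (by the substitution $s = -t$ in the defining limit), one gets
\begin{equation*}
f(x) \;=\; \lim_{t \to \infty}\bigl[d(x,\gamma_v(t)) + d(x,\gamma_v(-t)) - 2t\bigr] \;\geq\; 0.
\end{equation*}

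Finally, I would evaluate $f$ on the geodesic itself: using $b_v^{\pm}(\gamma_v(r)) = \mp r$, one obtains $f(\gamma_v(r)) = -r + r = 0$ for every $r \in \R$. Thus $f$ is a smooth harmonic function on the connected manifold $M$ that attains its minimum value $0$ at interior points of $\gamma_v$. The strong minimum principle for the (elliptic) Laplace operator then forces $f \equiv 0$, which is the claim. Alternatively, one may conclude using the Liouville-type property announced in the section introduction: $-f$ is harmonic and bounded above by $0$, hence constant, and equals $0$ on $\gamma_v$.

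The argument is essentially routine given the definitions and the AHM hypothesis. The only step requiring any care is justifying the limit interchange when writing $f$ as a single limit; this is immediate once one observes that both defining limits for $b_v^+$ and $b_v^-$ are monotone in $t$ (a standard consequence of the triangle inequality), so they may be combined. No deeper obstacle appears.
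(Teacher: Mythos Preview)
Your proposal is correct and follows essentially the same argument as the paper: harmonicity of $b_v^+ + b_v^-$ from $h=0$, non-negativity via the triangle inequality, vanishing along $\gamma_v$, and the minimum principle. The only differences are cosmetic---you add some extra justification (smoothness, the limit interchange, and an alternative Liouville-type conclusion) that the paper omits.
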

\begin{proof}
  As $(M,g)$ is an AHM,  $ \Delta b_{v}^{\pm}= h=0. $ Also,
$$ b{_v}^{+}(x) + b_{v}^{-}(x) =  \lim_{t\rightarrow\infty} d
(x,\gamma_v(t)) + d (x,\gamma_v(-t)) -2t. $$ Hence, $b{_v}^{+}(x)
+ b_{v}^{-}(x) \geq 0$ for all $x$, by triangle inequality, and \\
$(b{_v}^{+} + b_{v}^{-})(\gamma_v(t))= 0$,
since $\gamma_v$ is a line.\\
\noindent Thus,  the minimum principle shows that $b_{v}^{+} +
b_{v}^{-}=0$.  
\end{proof}

\begin{Cor}
The stable horospheres,  $(b_v^+)^{-1}(c)$,  and unstable  horospheres,  $(b_v^-)^{-1}(-c)$, of an  AHM coincide like flat spaces.
\end{Cor}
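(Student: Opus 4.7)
The plan is to deduce this corollary directly from Lemma \ref{lem:bus}, which gives the pointwise identity $b_v^+(x) + b_v^-(x) = 0$ on any AHM. There is really only one step: the level set description translates, under this identity, into the claim, so no new geometric input is needed beyond what is already in the lemma.

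More precisely, I would argue as follows. Fix $v \in SM$ and $c \in \R$. By Lemma \ref{lem:bus}, for every $x \in M$ we have $b_v^-(x) = -b_v^+(x)$. Therefore the equivalences
\begin{equation*}
x \in (b_v^+)^{-1}(c) \;\Longleftrightarrow\; b_v^+(x) = c \;\Longleftrightarrow\; -b_v^-(x) = c \;\Longleftrightarrow\; b_v^-(x) = -c \;\Longleftrightarrow\; x \in (b_v^-)^{-1}(-c)
\end{equation*}
hold, which gives the desired set equality $(b_v^+)^{-1}(c) = (b_v^-)^{-1}(-c)$.

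I do not expect any genuine obstacle here; the work has already been done in establishing $h=0$ forces $b_v^+ + b_v^- \equiv 0$ via the minimum principle. The parenthetical phrase \emph{like flat spaces} is then a geometric remark worth recording: in $\R^n$, both the stable and unstable horospheres determined by a unit vector $v$ are affine hyperplanes orthogonal to $v$, and the coincidence $(b_v^+)^{-1}(c) = (b_v^-)^{-1}(-c)$ reduces to the identification of the same hyperplane from the two opposite endpoints of the line $\gamma_v$. The corollary thus shows that this classical picture persists for arbitrary AHM's, which is consistent with the overall program of proving first order flatness in Theorem \ref{f}.
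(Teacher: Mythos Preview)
Your proof is correct and matches the paper's approach: the paper states this corollary immediately after Lemma~\ref{lem:bus} without giving an explicit proof, treating it as an immediate consequence of the identity $b_v^+ + b_v^- = 0$, which is precisely what you have spelled out.
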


\begin{Cor}\label{b3}
Any two asymptotic geodesics
of  an  AHM,  $M$, are  bi-asymptotic.
\label{c13}
\end{Cor}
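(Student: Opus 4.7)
The plan is to reduce bi-asymptoticity to Corollary \ref{b2} by exploiting the additional symmetry in an AHM provided by Lemma \ref{lem:bus}. Suppose $\gamma_v$ and $\gamma_w$ are asymptotic in the positive direction. By Corollary \ref{b2}, this is equivalent to
$$b_v^+ - b_w^+ \equiv c$$
for some constant $c \in \R$. The goal is to verify the analogous relation for the Busemann functions toward $-\infty$.

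Next I would use Lemma \ref{lem:bus}, which holds precisely because the horospheres are minimal ($h=0$): it gives $b_v^+ + b_v^- = 0$ and $b_w^+ + b_w^- = 0$. Subtracting these identities and substituting the hypothesis, I get
$$b_v^- - b_w^- = -(b_v^+ - b_w^+) \equiv -c.$$
A brief check from the definitions also shows $b_v^- = b_{-v}^+$ and $b_w^- = b_{-w}^+$, so the displayed constancy is the Busemann-function criterion (again via Corollary \ref{b2}) for $\gamma_{-v}$ and $\gamma_{-w}$ to be asymptotic. Since asymptoticity is an equivalence relation on oriented geodesics by Corollary \ref{b2}, this says exactly that $\gamma_v$ and $\gamma_w$ are asymptotic toward $-\infty$ as well. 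Combined with the hypothesis, they are bi-asymptotic.

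I do not anticipate a serious obstacle here; the Corollary is essentially a bookkeeping consequence of the two preceding results. The only delicate point, and the step I would double-check, is the identification $b_v^- = b_{-v}^+$ via a change of variables $s = -t$ in the defining limits, so that Corollary \ref{b2}, as stated for Busemann functions toward $+\infty$, can be legitimately applied to the reversed pair $(\gamma_{-v}, \gamma_{-w})$.
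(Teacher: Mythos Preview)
Your argument is correct. Both your proof and the paper's hinge on Lemma~\ref{lem:bus} ($b_v^{+}+b_v^{-}=0$), but the packaging differs slightly. The paper fixes a point $p$, lets $\gamma_w^{\pm}$ be the unique asymptotes to $\gamma_v$ from $p$ in each direction (Corollary~\ref{b1}), and then differentiates the relation $b_v^{+}+b_v^{-}=0$ to obtain $\nabla b_v^{+}(p)=-\nabla b_v^{-}(p)$; this forces the initial velocities of $\gamma_w^{+}$ and $\gamma_w^{-}$ to be opposite, so they fit together into a single bi-asymptotic geodesic. You instead stay at the level of the Busemann functions themselves: from $b_v^{+}-b_w^{+}\equiv c$ and Lemma~\ref{lem:bus} you deduce $b_{-v}^{+}-b_{-w}^{+}\equiv -c$, and then invoke Corollary~\ref{b2} directly. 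Your route is marginally cleaner because it avoids the gradient/initial-velocity step, and your check that $b_v^{-}=b_{-v}^{+}$ is the right bridge to apply Corollary~\ref{b2} to the reversed pair.
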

\begin{proof}
From  Lemma \ref{lem:bus},  it  follows  that
\begin{eqnarray}
 b_{v}^{+}+b_{v}^{-}=0.
 \label{e16}
 \end{eqnarray}
Let ${\gamma_w}^{\pm}$ be unique asymptote to $\gamma_v$
starting from $p$ in positive and negative direction
respectively with ${\gamma_w}^{\pm}(0)=p.$\\ 
Therefore,  from the above  equation,
$$ \left(\nabla b_{v}^{+})(\gamma_{w}^{+}(0)\right)=
- \left(\nabla b_{v}^{-})({\gamma_w}^{-}(0)\right).$$

It follows that         
$$\frac{d}{dt}(\gamma_{w}^{+}(t))|_{t=0}=
-\frac{d}{dt}({\gamma_w}^{-}(t))|_{t=0}.$$   
Thus, two asymptotes ${\gamma_w}^{\pm}$ fit together to
form a smooth geodesic $\gamma_w.$  Hence, $\gamma_w$
is bi-asymptotic to ${\gamma_v}$.
\end{proof}

\begin{Cor}\label{bi-asy}
 If  $c_1$ and $c_2$  are two geodesics of  AHM,  $M$
such that \\ $d(c_1(t),c_2(t)) \leq C_1$ as $t \rightarrow \infty$, then
 $c_1$ is bi-asymptotic  to $c_2$. We obtain, \\ $d(c_1(t),c_2(t)) \leq C_2$ as $t \rightarrow - \infty$,
 consequently  $d_{H}(c_1, c_2)$,  Hausdorff distance  between  two  lines  is  bounded.
 We conclude, in an AHM,  there  exists  a  bounded  strip in all  directions.
\end{Cor}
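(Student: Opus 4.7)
The plan is to apply the characterization of asymptotic geodesics in terms of Busemann functions (Corollaries \ref{b2} and \ref{b3}) after converting the distance hypothesis into a bound on Busemann functions, using the Liouville property for AHM (Corollary \ref{Lio1} from $\S 3$) as the key global ingredient.

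First, I would use the triangle inequality on the hypothesis: for every $x \in M$ and every large $t$, $|d(x,c_1(t)) - d(x,c_2(t))| \le d(c_1(t),c_2(t)) \le C_1$. Subtracting $t$ from both distances and letting $t \to \infty$ gives
\[
|b_{c_1'(0)}^+(x) - b_{c_2'(0)}^+(x)| \le C_1 \quad \text{for all } x \in M.
\]
Since $M$ is AHM, $h = 0$, so by Definition \ref{ah} each $b_v^+$ is harmonic; hence so is their difference. The Liouville property of Corollary \ref{Lio1} then forces this bounded harmonic difference to be a constant $K$. By Corollary \ref{b2} this is equivalent to $c_1, c_2$ being asymptotic in the positive direction, and Corollary \ref{b3} upgrades this to bi-asymptoticity, giving the first claim of the corollary.

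Next, for the negative-direction distance bound, I would use Lemma \ref{lem:bus}: on an AHM, $b_v^+ + b_v^- = 0$. Thus $b_{c_1'(0)}^-(x) - b_{c_2'(0)}^-(x) = -K$ is also constant, i.e., $c_1, c_2$ are asymptotic in the negative direction with Busemann difference $-K$. Applying the forward argument to the time-reversed geodesics $\tilde c_i(t) := c_i(-t)$---which are themselves bi-asymptotic geodesics of the AHM, now linked in the $+\infty$ direction through the constant shift $-K$ in $b^+$---should yield $d(c_1(t),c_2(t)) \le C_2$ as $t \to -\infty$. Combining the two bounds gives $d_H(c_1,c_2) \le \max(C_1,C_2)$, and in particular a bounded strip containing both lines.

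The main obstacle is precisely this reversal step: the forward direction (bounded distance $\Rightarrow$ constant Busemann difference) uses only the triangle inequality and Liouville, but the reverse direction (constant Busemann difference $\Rightarrow$ bounded distance in the opposite time direction) is not purely formal, since Busemann functions are only $1$-Lipschitz and therefore produce only lower bounds, not upper bounds, on distance. One must exploit the AHM structure---that the common gradient $-\nabla b_{c_1'(0)}^+ = -\nabla b_{c_2'(0)}^+$ is a unit vector field whose orbits are exactly the asymptote geodesics, with divergence $h=0$ on every horosphere---to propagate the already-established forward distance bound backward along this flow. This geometric transport, using the minimality of horospheres, is the nontrivial input needed to close the argument, after which the Hausdorff bound and the bounded strip in every direction follow immediately.
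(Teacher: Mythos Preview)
Your approach matches the paper's almost exactly: both arguments pass from the distance hypothesis to a bound on $b^+_v - b^+_w$, invoke the Liouville property (Corollary~\ref{Lio1}) to make this difference constant, use Corollary~\ref{b2} to conclude asymptoticity, and then apply $b_v^+ + b_v^- = 0$ (Lemma~\ref{lem:bus}) to get the constant difference of $b^-$'s and hence bi-asymptoticity.

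On the one step you flag as an obstacle---deducing $d(c_1(t),c_2(t)) \le C_2$ as $t \to -\infty$ from the constant Busemann difference---you are being more careful than the paper. The paper simply writes ``Consequently, using (\ref{e9}), $d(c_1(t),c_2(t)) \le C_2$ as $t \to -\infty$,'' but (\ref{e9}) only controls Busemann values along the asymptote, not transverse distances, so it does not by itself yield an upper distance bound. Your instinct that some additional geometric input (the horosphere flow with divergence $h=0$) is needed to close this step is correct; the paper's citation of (\ref{e9}) alone leaves the same gap you identified. In short, your argument is at least as complete as the paper's, and your diagnosis of where the real work lies is accurate.
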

\begin{proof}
If $c_1'(0) = v$ and $c_2'(0) = w$,
then by hypothesis $b^+_v - b^+_w \leq C_1$. Clearly,
$b^+_v - b^+_w$ is a bounded harmonic function.
We will prove Liouville  property for AHM (Corollary  \ref{Lio2})
in $\S3$. Hence, $b^+_v - b^+_w$ is constant by  Liouville  property on an AHM  $M$. Therefore, $b^+_v - b^+_w = \mbox{constant}$
implies that $c_1$ and $c_2$ are asymptotic towards $+\infty$
(from  Corollary \ref{c9}).  Since 
$b_v^+ = - b_v^-$ from (\ref{e16}),   we  also  have  that
$b_v^- - b_w^-$  is  also constant,  and in turn  
$c_1$ and $c_2$ are asymptotic towards $-\infty$  also.
Consequently,  using 
(\ref{e9}),  $d(c_1(t),c_2(t)) \leq C_2$ as $t \rightarrow - \infty$. 
\end{proof}

\noindent
{\it  Ideal  Boundary:}   On $SM$
define an equivalence relation $v \sim w,$ if $v$ is asymptotic to $w$. The equivalence classes of this 
relation are called {\em points at infinity}.
If $M(\infty) = SM/\sim$, then  from  Corollary  \ref{b2}
and  \cite{RS.03},  we  conclude  that   $M(\infty)$
{\it consists  of  equivalent   classes  of  Busemann functions
on  $M$.}  And  $\bar  M  =  M \cup  M(\infty)$  is  Busemann 
compactification of  an  AH manifold.  
$M(\infty)$  is  called  as   Busemann   boundary  or  ideal 
boundary  of  $M$.

\begin{Thm}\label{norm} 
Let $(M,g)$ be an AH manifold. Let $J_1, J_2, \cdots, J_{n-1}$ be the Jacobi fields
along a geodesic ray $\gamma_{v}$, arising from variations of asymptotic geodesics.
If $\{e_i\}$ is an orthonormal basis of $v^{\perp}$ and $J_{i}(0) = e_{i}$, then 
\begin{eqnarray}
||J_1 \wedge J_2 \cdots \wedge J_{n-1}||(t) = e^{-ht}.
\end{eqnarray}
\end{Thm}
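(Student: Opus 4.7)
\medskip
\noindent\textbf{Proof plan.}
The strategy is to recognize the $J_i$ as stable Jacobi fields, convert the Jacobi equation into a first-order matrix ODE governed by the stable Riccati solution $u^-$, and then apply Jacobi's formula for the derivative of a determinant.

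First, I would identify the Jacobi fields precisely. Since asymptotic geodesics to $\gamma_v$ are the integral curves of $-\nabla b_v^+$ (Corollary \ref{c8}), any variation of $\gamma_v$ through asymptotic geodesics gives rise to a variation field $J(t)$ satisfying
\[
J'(t) \;=\; \nabla_t\,\partial_s c_s(t)\big|_{s=0} \;=\; \nabla_s\bigl(-\nabla b_v^+(c_s(t))\bigr)\big|_{s=0} \;=\; -\nabla_{J(t)}\nabla b_v^+ \;=\; u^-(\phi^t v)\bigl(J(t)\bigr),
\]
by the very definition $u^-(v)(x)=-\nabla_x\nabla b_v^+$ given in the preliminaries. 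Thus each $J_i$ satisfies the first-order linear ODE
\[
J_i'(t) \;=\; u^-(t)\,J_i(t), \qquad J_i(0) \;=\; e_i,
\]
where $u^-(t):=u^-(\phi^tv)$ acts on $\gamma_v'(t)^\perp$.

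Next, fix a parallel orthonormal frame $E_1(t),\dots,E_{n-1}(t)$ of $\gamma_v'(t)^\perp$ (such a frame exists by parallel transport of $\{e_i\}$) and write $J_i(t)=\sum_k M_{ki}(t)\,E_k(t)$. Because $E_k$ is parallel, $E_k'=0$, and the ODE above becomes the matrix ODE $M'(t)=U(t)M(t)$, where $U(t)$ is the matrix of $u^-(t)$ in the frame $\{E_k\}$. At $t=0$ the initial condition $J_i(0)=e_i$ forces $M(0)=\Id$. Also, by definition of an AH manifold we have $\tr u^-(t)\equiv -h$ for all $t$ (this is simply $-\Delta b_v^+\equiv -h$).

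Finally, applying Jacobi's formula to $M'(t)=U(t)M(t)$ gives
\[
\bigl(\det M\bigr)'(t) \;=\; \tr U(t)\cdot \det M(t) \;=\; -h\,\det M(t),
\]
which integrates to $\det M(t)=e^{-ht}$ thanks to $\det M(0)=1$. Since $\{E_k(t)\}$ is orthonormal,
\[
\|J_1\wedge J_2\wedge\cdots\wedge J_{n-1}\|(t)\;=\;|\det M(t)|\;=\;e^{-ht},
\]
as claimed. There is no substantive obstacle; the only subtlety worth stating carefully is the identification of $J_i'=u^-J_i$, which hinges on the smoothness of $b_v^+$ (Proposition \ref{smooth}) and on the fact that asymptotic geodesics are integral curves of $-\nabla b_v^+$.
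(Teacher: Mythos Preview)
Your proof is correct and is essentially the same argument as the paper's, just packaged differently: both compute the logarithmic derivative of the $(n-1)$-volume spanned by the stable Jacobi fields and identify it with $\tr u^{-}=-\Delta b_v^{+}=-h$. The paper expresses this via the flow of $\nabla b_v$ and the Lie derivative identity $L_{\nabla b_v}\omega=(\Delta b_v)\,\omega$, whereas you express it via the matrix ODE $M'=U M$ and Jacobi's formula $(\det M)'=(\tr U)\det M$; these are the same computation in global versus local language.
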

\begin{proof}
Let $\phi_t$ be the flow of $\nabla b_{v}$ i.e., it
is  a one parameter family of diffeomorphism of $M$. 
Since, the  integral curves of $\nabla b_{v}$  are asymptotic
geodesics by Corollary \ref{b1}, $\phi_t$ is a variation of $\gamma_v$
through asymptotic geodesics. Therefore, $J_i(t) = d\phi_t(e_i)$. 
If $\omega$ denotes the volume form on $M$, then
$$ |\!|J_1 \wedge J_2 \wedge \cdot\cdot\cdot
\wedge J_{d-1}|\!|(t)
= |\det\;d\phi_t| =
|d\phi_t(e_1) \wedge d\phi_t(e_2) \wedge \cdot\cdot\cdot
\wedge d\phi_t(e_{d-1})| = |\phi_t^*\omega|.$$
Using properties of Lie derivative we get,
$$ - \frac{d}{dt} \phi_t^{*}\;\omega = L_{\nabla b_{v}}\omega =
({\rm div}\nabla b_{v})\; \phi_t^{*}\;\omega =
(\Delta b_{v}) \; \phi_t^{*}\;\omega.$$
This implies that
$ - \frac{d}{dt} \phi_t^{*}\;\omega =
h  \phi_t^{*}\;\omega.$ But, as
$ \phi_0^{*}\;\omega = \omega,$
we obtain that $\phi_t^{*}\;\omega = e^{-ht} \omega$.
This gives
$$|\!|J_1 \wedge J_2 \wedge \cdot\cdot\cdot
 \wedge J_{n-1}|\!|(t) = 
 |\det\; d\phi_t| = e^{-ht}.$$
\end{proof}

\begin{Cor}
If $(M,g)$ is an AHM, then there exists a bounded strip in all  directions.
\end{Cor}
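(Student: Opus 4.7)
The plan is to combine Theorem~\ref{norm} specialized to $h=0$ with the bi-asymptote upgrade of Corollary~\ref{bi-asy}. For each $v\in S_pM$ I must exhibit a geodesic distinct from $\gamma_v$ whose Hausdorff distance to $\gamma_v$ is finite; since $v$ is arbitrary this yields a bounded strip in every direction.

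Fix $v\in S_pM$ and an orthonormal basis $\{e_1,\dots,e_{n-1}\}$ of $v^\perp$. By Theorem~\ref{norm} with $h=0$, the Jacobi fields $J_1,\dots,J_{n-1}$ along $\gamma_v$ arising from variations through asymptotic geodesics with $J_i(0)=e_i$ satisfy $\|J_1\wedge\cdots\wedge J_{n-1}\|(t)\equiv 1$, so the flow of $-\nabla b_v^+$ preserves the transverse $(n-1)$-volume. In dimension $2$ this already finishes: the single stable Jacobi field has constant norm $1$, so the asymptote from $\exp_p(\epsilon e_1)$ stays at distance $\approx\epsilon$ from $\gamma_v$ for all $t\ge 0$. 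In higher dimensions I would use the wedge identity to produce at least one unit $w\in v^\perp$ whose stable Jacobi field $J_w$ stays bounded as $t\to+\infty$: if instead $\|J_i(t)\|\to\infty$ for every $i$ while the wedge norm remains $1$, the angles between the $J_i(t)$ must collapse, which I rule out through the Riccati equation~\eqref{eq:ricatti} applied to the stable operator $u^-$, using $\tr u^-=-h=0$ together with $\Ric\le 0$ from Proposition~\ref{non-pos}. With such $w$ in hand, the integral curve $c$ of $-\nabla b_v^+$ starting at $\exp_p(\epsilon w)$ is the unique asymptote to $\gamma_v$ at that basepoint (Corollary~\ref{b1}) and satisfies $d(c(t),\gamma_v(t))\le \epsilon\,\|J_w(t)\|+o(\epsilon)$, hence stays bounded for $t\to+\infty$. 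Corollary~\ref{bi-asy} then upgrades this to bi-asymptoticity of $c$ and $\gamma_v$ and to a uniform bound $d_H(c,\gamma_v)<\infty$, i.e.\ a bounded strip in direction $v$.

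The main obstacle is the higher-dimensional extraction of a bounded stable direction from the wedge identity alone, since a priori the angles between the $J_i(t)$ could collapse fast enough to keep $\|J_1\wedge\cdots\wedge J_{n-1}\|\equiv 1$ while every individual $\|J_i(t)\|$ blows up. Ruling this out is the only analytic step beyond the Busemann/Jacobi field machinery already developed in Section~2, and I would attack it through the trace-free Riccati dynamics for $u^-$ combined with $\Ric\le 0$, which together force at least one eigendirection of $u^-$ to be non-expanding on average; once this is secured, the rest of the proof is routine.
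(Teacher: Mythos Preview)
Your outline is the paper's argument step for step: Theorem~\ref{norm} at $h=0$ gives $\|J_1\wedge\cdots\wedge J_{n-1}\|\equiv 1$; extract a bounded stable Jacobi field; pass to a nearby asymptote at bounded forward distance; upgrade to a two-sided bound via Corollary~\ref{bi-asy}. The step you isolate as the obstacle---producing a \emph{fixed} direction $w\in v^\perp$ with $\sup_{t\ge 0}\|J_w(t)\|<\infty$ from the wedge identity alone---is precisely the step the paper does not justify either: immediately after recording $\|J_1\wedge\cdots\wedge J_{n-1}\|\equiv 1$ it simply asserts ``Hence, in an AHM there exists at least one Jacobi field $J$ along a geodesic $\gamma$ which is bounded'' and moves on. Your worry is legitimate, since a family of linear maps with determinant~$1$ can have every fixed vector blow up while angles collapse (e.g.\ $R_{-t}\,\mathrm{diag}(e^t,e^{-t})\,R_t$ on $\R^2$), so the Jacobi/Riccati structure must genuinely enter. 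Your proposal is therefore already at the paper's level of detail and more candid about where the content lies; the attack you sketch through the trace-free Riccati dynamics of $u^-$ together with $\Ric\le 0$ is the natural route, but neither you nor the paper carries it through.

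One smaller point: your estimate $d(c(t),\gamma_v(t))\le \epsilon\|J_w(t)\|+o(\epsilon)$ is infinitesimal in $\epsilon$ at each fixed $t$, and you need the remainder uniform in $t$ before concluding boundedness at some fixed small $\epsilon$. The paper handles this by writing $d(\gamma(t),\tilde\gamma(t))\le\int_0^l\|J(s)\|\,ds$, integrating the variation-field norm across the variation parameter; this is exact rather than first-order, but tacitly assumes the boundedness persists for the Jacobi fields at every intermediate parameter $s\in[0,l]$, which is again left unargued.
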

\begin{proof}
By Theorem \ref{norm} for an AHM,   
$ ||J_1 \wedge J_2 \wedge \cdot\cdot \cdot\wedge J_{n-1}|\!|(t) = |{\det}\; d\phi_t| = 1.$ Hence, in an AHM there  exists at least one Jacobi field $J$ along a
geodesic $\gamma$ which is bounded. If $\tilde \gamma$ is a geodesic in the  
corresponding geodesic variation of $\gamma,$ then 
 $d(\gamma(t),\tilde \gamma (t)) \leq
 \displaystyle\int_{0}^{l}{|\!|J(s)|\!|}\;\mbox{ds} \leq C_1l.$
We conclude that $d(\gamma(t),\tilde \gamma (t)) \leq C$ for all $t > 0$.
By Corollary \ref{bi-asy}, $\gamma, \tilde \gamma$ are bi-asymptotic
geodesics and $d(\gamma(t),\tilde \gamma (t))$ is bounded for all $t \in \R$.
Hence, there exists a bounded strip in all  directions. 
\end{proof}

\noindent
\begin{Rem}
Let $J$ be a Jacobi field 
arising from a variation  of asymptotic geodesics.
If $J(r) = 0$ for some $r$, then $J'(r) = u^{-}(J(r))= 0$.  
Hence, $J \equiv 0$. Therefore, any Jacobi field arising from the
variation of asymptotic geodesics is non vanishing. 
\end{Rem}

\begin{Cor}
Let $(M,g)$ be an AH manifold  and let $\phi_{t}$ denote the geodesic 
flow of $M$. Let $H_{0} = b_{v}^{-1}(0)$, be the horosphere of $M$.  
If $D$ is any domain in $H_0$ and if $\phi_t(D) = D_t,$ 
then $A(D) = e^{ht}A(D_t).$ In particular, for AHM's $A(D) = A(D_t)$ and for an AHM 
$\phi_t$ is an area preserving diffeomorphism of $M$.
\end{Cor}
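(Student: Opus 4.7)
The plan is to derive this as a direct consequence of Theorem \ref{norm}. First I would reinterpret $\phi_t$, restricted to the foliation of $M$ by the horospheres $b_v^{-1}(c)$, as the flow of the unit vector field $-\nabla b_v$. Indeed, by Corollary \ref{b1} the integral curves of $-\nabla b_v$ are precisely the asymptotic geodesics to $\gamma_v$, parametrized by unit speed, so this flow coincides with the geodesic flow on each such orbit. In particular $\phi_t$ sends $H_0 = b_v^{-1}(0)$ to $b_v^{-1}(-t)$, so $D_t = \phi_t(D)$ really does sit inside a horosphere and $A(D_t)$ is well defined.

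The core computation is to parametrize $D \subset H_0$ by a chart $\sigma : U \to H_0$, $U \subset \R^{n-1}$, and fix at each point an orthonormal frame $\{e_1(u), \ldots, e_{n-1}(u)\}$ of $T_{\sigma(u)} H_0 = (\nabla b_v(\sigma(u)))^\perp$. For each $u \in U$, the curve $s \mapsto \phi_s(\sigma(u))$ is a geodesic asymptotic to $\gamma_v$, and the vector fields $J_i(s) := d\phi_s(e_i)$ are Jacobi fields along this geodesic coming from a variation through asymptotic geodesics, with $J_i(0) = e_i$. Applying Theorem \ref{norm} gives
$$\|J_1(t) \wedge \cdots \wedge J_{n-1}(t)\| = e^{-ht},$$
and integrating over $U$ yields
$$A(D_t) = \int_U \|J_1(t) \wedge \cdots \wedge J_{n-1}(t)\|\, du = e^{-ht} \int_U \|e_1 \wedge \cdots \wedge e_{n-1}\|\, du = e^{-ht} A(D),$$
equivalently $A(D) = e^{ht} A(D_t)$. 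The AHM case follows by setting $h = 0$, giving $A(D) = A(D_t)$; since $M$ is foliated by the horospheres $b_v^{-1}(c)$ for $c \in \R$ and $\phi_t$ preserves this foliation leaf by leaf with no contraction or expansion of horospheric area, the area-preserving property on individual horospheric domains lifts to the stated global statement for $\phi_t$ on $M$.

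I do not anticipate a serious obstacle: the entire quantitative content is already in Theorem \ref{norm}. The only points requiring care are the identification of $\phi_t$ with the flow of $-\nabla b_v$ (immediate from Corollary \ref{b1}) and the verification that the Jacobi fields $J_i(t)$ remain a basis of the tangent space to the horosphere at $\phi_t(\sigma(u))$, which is guaranteed by the Remark preceding the corollary: any Jacobi field arising from a variation of asymptotic geodesics is nowhere vanishing, so linear independence is preserved along the flow.
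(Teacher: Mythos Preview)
Your approach is essentially the paper's: both derive the result from Theorem \ref{norm} by change of variables, the paper writing it compactly as $A(D_t) = \int_{D_t}\omega = \int_D |\det d\phi_t|\,\omega = e^{-ht}A(D)$. One minor slip to fix: your formula $A(D_t) = \int_U \|J_1(t)\wedge\cdots\wedge J_{n-1}(t)\|\,du$ requires the $e_i$ to be the coordinate tangent vectors $\partial_i\sigma$ of the chart, not an arbitrary orthonormal frame (otherwise $\int_U \|e_1\wedge\cdots\wedge e_{n-1}\|\,du = |U|$, not $A(D)$); either take $e_i = \partial_i\sigma$ and use that Theorem \ref{norm} really gives $|\det d\phi_t| = e^{-ht}$ independent of the initial basis, or carry the Jacobian of $\sigma$ through the integral.
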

\begin{proof}
Since $\phi_t(D) = D_t$, by Theorem \ref{norm}, we obtain:
$$ A(D_t) = \int_{D_t} \omega = \int_{D} |\det d\phi_t| \omega
= \int_{D}  |\phi_t^* \omega|
= e^{-ht} A(D).$$
\end{proof}

Proposition $5.1$ of \cite{RS.02b} shows that HM's 
satisfy the absolute area minimizing property of horospheres. 
The proof also holds for AHM's and we have that AHM's satisfy the absolute area minimizing property of horospheres. 

\noindent
\begin{Pro}\label{am} Let $M$ be an AHM and
$D$ be any compact subdomain  of $H_0$
with $\partial D = \Gamma \subset H_0$. Let $\Sigma$ be any
compact hypersurface  of $M$ with
$\partial \Sigma = \Gamma \subset H_0.$
Then, $A(D) \leq A \Sigma$.
\end{Pro}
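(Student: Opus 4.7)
The plan is to run the standard calibration argument using the vector field $X=\nabla b_v^+$, which on an AHM is the natural divergence-free unit calibration. Since $M$ is AHM, $h=0$, and so $\operatorname{div}(X) = \Delta b_v^+ \equiv 0$. Moreover, $|X|\equiv 1$ everywhere on $M$ because Busemann functions in a manifold without conjugate points are $C^1$ with unit gradient (and, by Proposition \ref{smooth}, smooth). Finally, $X$ is precisely the unit normal field to every horosphere $(b_v^+)^{-1}(c)$, and in particular to $H_0 \supset D$.

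Next I would argue that $D$ and $\Sigma$ together cobound a compact region $\Omega \subset M$. Since $\partial D = \partial \Sigma = \Gamma$, the oriented chain $\Sigma - D$ is a closed $(n-1)$-cycle in the simply connected manifold $M$; in the topology of an AHM (Cartan--Hadamard, so $M$ is diffeomorphic to $\R^n$), any such closed hypersurface bounds a compact domain $\Omega$. Orient $\partial\Omega$ by the outward unit normal $\nu$; then $\nu|_D = \pm X$ and $\nu|_\Sigma$ is the outward unit normal to $\Sigma$.

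Applying the divergence theorem to $X$ on $\Omega$, and using $\operatorname{div}(X)\equiv 0$, I get
\begin{equation*}
0 \;=\; \int_{\Omega}\operatorname{div}(X)\,dV \;=\; \int_{\partial\Omega} \langle X,\nu\rangle\,dA \;=\; \pm\int_D \langle X, X\rangle\,dA \;+\; \int_\Sigma \langle X,\nu_\Sigma\rangle\,dA.
\end{equation*}
The integrand on $D$ equals $1$, so $|\int_D \langle X,\nu_D\rangle\,dA| = A(D)$. On $\Sigma$, the Cauchy--Schwarz inequality gives $|\langle X,\nu_\Sigma\rangle|\leq |X|\cdot|\nu_\Sigma| = 1$, hence $|\int_\Sigma \langle X,\nu_\Sigma\rangle\,dA| \leq A(\Sigma)$. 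Combining these bounds yields $A(D)\leq A(\Sigma)$, as claimed.

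The main obstacle is the topological step identifying the cobounding region $\Omega$; if one is uncomfortable relying on $M\cong\R^n$, the cleanest alternative is to argue directly with fluxes of the divergence-free field $X$ and the fact that, since $D\cup\Sigma$ has no boundary, the flux of $X$ through $D$ equals the flux through $\Sigma$ (independence of the flux on the representative in a fixed relative homology class). The geometric content, namely that $X$ is unit and $X$-calibrates $H_0$, is entirely a consequence of the AHM hypothesis $h=0$, so no other ingredient beyond divergence theorem and Cauchy--Schwarz is needed.
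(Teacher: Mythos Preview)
Your proposal is correct and is precisely the standard calibration argument: the unit divergence-free field $X=\nabla b_v^+$ calibrates the horosphere, and the divergence theorem (or equivalently the flux/closed-form argument you sketch at the end, which cleanly sidesteps the embeddedness issue for $\Sigma\cup D$) together with Cauchy--Schwarz gives $A(D)\le A(\Sigma)$. The paper does not supply its own proof here but simply invokes Proposition~5.1 of \cite{RS.02b}, whose argument is exactly this calibration; so your approach coincides with the one the paper defers to.
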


\section{Strong Liouville Type  Property}
Let $(M,g)$ be a complete, connected non-compact Riemannian manifold of infinite injectivity radius and of subexponential volume growth.     
In  this  section, we show that  $(M,g)$ satisfies  {\it Strong  Liouville Type Property} and hence {\it Liouville  Property}. We also show that it also 
satisfies {\it $L^1$ Liouville  Property}.\\\\
As AHM's have subexponential volume growth, we infer that they satisfy all
the aforementioned  Liouville Type Properties in comparison  with an HM  \cite{RS.02b}  (which  are now  known  to be flat spaces). Strong  Liouville Type Property of AHM is  an essential tool in proving the existence  of  Killing  vector field  on  AHM,  a  major  step for  proving  flatness of  AHM.\\\\
To prove the main result of this section, we use techniques of \cite{T.08} and \cite{RS.02b}. We recall that in \cite{RS.02b} the integral formula for the derivative of harmonic function on harmonic manifolds was proved.\\\\
Let $(M,g)$ denote a complete, connected and non-compact Riemannian manifold of infinite injectivity radius. In this section, we obtain an integral formula for the derivative of mean value of a $C^1$ function on $(M,g)$, viz., Theorem \ref{IDM}, of this section.  In particular, Theorem \ref{IDM} also holds for AH manifolds.
In fact, Theorem \ref{IDM} is the strengthening of the main result of 
\cite{T.08} viz., Theorem \ref{min1}, as well as strengthening of the integral 
formula of \cite{RS.02b}  viz., Theorem \ref{HL}, quoted in subsection $3.1$.

\begin{Def}\label{sl} $(M,g)$  be  a  non-compact Riemannian manifold
\begin{itemize}
\item[1)] $(M,g)$ is said  to  satisfy  {\it Strong Liouville Type  Property} if there are  no  non-trivial  subharmonic  functions  on  $(M,g)$ which  are bounded from above.
\item[2)] $(M,g)$  is  said  to  satisfy {\it Liouville  Property} if there are  no  non-trivial bounded harmonic functions on $(M,g)$.
\item[3)] $(M,g)$ is said to satisfy {\it $L^1$-Liouville  Property} if there are no non-trivial non-positive subharmonic functions in $L^1(M)$.
\end{itemize}
\end{Def}

\subsection{Known integral formulae for the derivative of harmonic and subharmonic functions}
In this subsection we recall known integral formulae for the derivative of harmonic and subharmonic functions. We also recall definitions and results needed to prove Strong Liouville Type Property for manifolds of subexponential volume growth.\\\\
{\it Definition : Volume growth type}
\begin{itemize}
\item[(i)] If  volume  growth  of  a  manifold  $M$
 satisfies  that $\Vol(B(p,r)) \geq c^r $ for large $r$,
 and  for  some  $c > 1$,  then  it  is  said  to have  
 {\em exponential volume growth.} 
\item[(ii)] If  volume  growth  of  a  manifold  $M$
 is not exponential, then $M$ is said to have {\em subexponential volume growth}.
Equivalently, volume growth of a manifold is subexponential, if 
$\displaystyle \lim_{r \rightarrow \infty}\frac{\log V(p,r)}{r} = 0$.  
\item[(iii)] If  $\Vol(B(p,r)) \leq C r^{n}\;$ for large $r$ and for some
$C$ and $n > 0,\;$ then $M$ is said to have
{\em polynomial volume growth}.
\end{itemize}
\noindent 
Note  that from the above Definition (ii) an  AHM  has  apriori subexponential volume  growth.\\  
In the sequel, $(M,g)$ tacitly denotes a complete, non-compact Riemannian manifold of infinite injectivity radius. Let $V(p,r)$, $A(p,r)$ and $S$, respectively, denote the volume of the ball $B(p,r)$, area of the sphere $S(p,r)$ and unit sphere
in $T_{p}M$, respectively.

\begin{Def}
Let $u$ be  a continuous (or measurable) function on $M$. 
Fix $p \in M$ and $r >0$. Then {\it the mean value of $u$ at $p$} is its average over the ball centred at $p$ and radius $r > 0$, denoted by $A_{u,r}(p)$. We have:
\begin{equation}\label{mean}
A_{u,r}(p) = \frac{1}{V(p,r)} \int_{B(p,r)}u \;d\mu.  
\end{equation}
\end{Def}

\noindent
\begin{Def} \label{WMV}   
A manifold $(M,g)$ is said to satisfy {\it weak mean value inequality}, 
${\cW \cM}_R(\lambda, b)$, if there exists a constant $\lambda > 0, b >1$
such that for any $r \leq R / b $ and $f(x) \geq 0$ satisfying 
$\Delta f \geq 0$ on $\;B_p(rb),$ then
$$ f(p)  \leq \frac{\lambda}{\Vol(B_p(r))} \int_{B_p(br)} f(x) dx.$$
\end{Def}
\noindent
Note :  If  $b =1$ in  ${\cW \cM}_R(\lambda, b)$,  then  it  is  called  as
{\it Mean Value Inequality ${\cM}_R(\lambda)$. \label{MV}}  
\noindent
It is well known that all
harmonic manifolds satisfy Mean Value Inequality ${\cM}_R(\lambda)$ with 
$\lambda = 1$ for all $R$. See  \cite{W.50}.\\

We will use the following {\it stronger version of the maximum principle} for the subharmonic  functions. Lemma 3.4 of \cite{GT.83} states that:
\begin{Thm}\label{smax}
At the point on the boundary where the maximum is attained, the subharmonic function
has a positive outward normal derivative. 
\end{Thm}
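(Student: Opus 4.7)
The plan is to establish this classical Hopf-type boundary point estimate via the standard barrier argument, adapted to the Riemannian setting. The statement is local: it concerns the behavior of the subharmonic function $u$ near a single boundary point $x_0 \in \d \Om$ at which the maximum $M = \sup_{\ov \Om} u$ is attained. I would assume $\Om$ satisfies an interior sphere condition at $x_0$, so there is a geodesic ball $B(y, R) \sub \Om$ with $x_0 \in \d B(y, R)$. The infinite injectivity radius hypothesis standing throughout this section guarantees that the distance function $r(x) = d(x, y)$ is smooth on $B(y, R)$ and controls the geometry of the barrier constructed below.

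Next I would construct an auxiliary barrier. Set $v(x) = e^{-\al r(x)^2} - e^{-\al R^2}$ for a parameter $\al > 0$ to be chosen. A direct computation using $|\nabla r| = 1$ and $\De r^2 = 2r \De r + 2$ yields
\[ \De v = e^{-\al r^2}\bigl(4\al^2 r^2 - 2\al(r \De r + 1)\bigr). \]
On the annular region $A = B(y, R) \sm \ov{B(y, R/2)}$, the quantity $r$ is bounded below by $R/2$ and the term $r \De r$ is controlled by a constant depending only on the local geometry of $M$ near $x_0$. Choosing $\al$ large enough therefore makes $\De v > 0$ throughout $A$, while $v > 0$ in $A$, $v = 0$ on $\d B(y, R)$, and $(\d v / \d n)(x_0) = -2 \al R e^{-\al R^2} < 0$, where $n$ denotes the outward unit normal to $\d B(y, R)$ at $x_0$.

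Now consider $w = u - M + \ep v$ on $\ov A$ with $\ep > 0$ to be chosen. On the outer boundary $\d B(y, R)$ one has $v = 0$ and $u \leq M$, hence $w \leq 0$. On the inner boundary $\d B(y, R/2)$ one has $u < M$ strictly (otherwise the strong maximum principle applied inside $B(y, R)$ forces $u$ to be constant on a neighborhood, making the conclusion vacuous); choosing $\ep$ small enough then makes $w \leq 0$ there as well. Since $\De w = \De u + \ep \De v > 0$ in $A$, the weak maximum principle yields $w \leq 0$ on $\ov A$. Combined with $w(x_0) = 0$, this forces $(\d w / \d n)(x_0) \geq 0$, which rearranges to
\[ \frac{\d u}{\d n}(x_0) \geq -\ep \frac{\d v}{\d n}(x_0) > 0, \]
as claimed.

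The main obstacle is mild and technical rather than conceptual: one must verify that the Hopf barrier construction carries over to the Riemannian setting. The key inputs are that, on the annulus $A$ (safely inside the injectivity radius), the distance function $r$ is smooth and the Laplacian $\De r$ admits a uniform bound depending only on local curvature; both are standard in this setting. The reduction to the case where $u$ does not attain $M$ inside $B(y, R)$ invokes the strong maximum principle, which itself can be derived from the same barrier construction, so in practice the two results are proved hand in hand.
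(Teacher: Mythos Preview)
Your argument is the classical Hopf boundary point lemma proof and is correct as written, with the interior sphere condition and the nonconstancy hypothesis made explicit as they should be. There is nothing to compare against, however: the paper does not supply its own proof of this statement but simply quotes it as Lemma~3.4 of Gilbarg--Trudinger \cite{GT.83}. Your barrier construction is exactly the argument one finds there (transported to the Riemannian setting), so in effect you have reproduced the cited proof rather than offered an alternative.
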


\noindent
Now we recall the definition of {\it the stability vector field} as defined in \cite{T.08}.
\begin{Def}
For any real number $r > 0$, the {\it stability vector field} $H(.,r)$ on $(M,g)$ is defined by
\begin{eqnarray} 
H(p,r) = \int_{B(p,r)} exp_{p}^{-1}(q)\; d \mu(q),\;\; \forall p \in M,
\end{eqnarray}
where $exp_{p}^{-1}$ denotes the inverse of exponential map.
\end{Def}
\noindent
It was proved in \cite{T.08}
that the volume function $V$ and the stability vector field $H$ are related by the following differential equation.
\begin{Lem}\label{derv}
Let $\nabla$ denote the gradient operator on $(M,g)$. Then
for any $r > 0$ and $p \in M$, it holds:
\begin{eqnarray*}
\nabla V(p,r) - \frac{1}{r} \frac{\partial}{\partial r} H(p,r) = 0. 
\end{eqnarray*}
\end{Lem}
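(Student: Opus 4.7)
The plan is to show that both $\nabla V(p,r)$ and $\tfrac{1}{r}\partial_{r}H(p,r)$ equal the same boundary integral $\tfrac{1}{r}\int_{S(p,r)}\exp_p^{-1}(q)\,dA(q)$ over the geodesic sphere, where $dA$ is the induced area element. The decisive geometric input is that infinite injectivity radius makes $\exp_p\colon T_pM\to M$ a global diffeomorphism, so polar normal coordinates $q=\exp_p(s\xi)$, $\xi\in S_pM$, $s\in[0,\infty)$, are valid throughout $M$ and the minimising geodesic from $p$ to any other point is unique.

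In polar normal coordinates at $p$ one has $\exp_p^{-1}(q)=s\xi$ and $d\mu=\Theta_p(\xi,s)\,dV_{S_pM}(\xi)\,ds$, so
\begin{equation*}
H(p,r)=\int_{0}^{r}\int_{S_pM} s\xi\,\Theta_p(\xi,s)\,dV_{S_pM}(\xi)\,ds.
\end{equation*}
Differentiating in $r$ and recognising $\Theta_p(\xi,r)\,dV_{S_pM}(\xi)$ as the induced area element on $S(p,r)$ under the diffeomorphism $\xi\mapsto\exp_p(r\xi)$ gives $\partial_r H(p,r)=\int_{S(p,r)}\exp_p^{-1}(q)\,dA(q)$, which identifies $\tfrac{1}{r}\partial_r H(p,r)$ with the target expression.

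For $\nabla V(p,r)$ I would take a smooth curve $\sigma(\epsilon)$ in $M$ with $\sigma(0)=p$, $\sigma'(0)=w\in T_pM$, and differentiate $V(\sigma(\epsilon),r)=\Vol(B(\sigma(\epsilon),r))$ at $\epsilon=0$ via the standard first-variation-of-moving-domain formula; since the integrand is constant $1$, this reduces the derivative to the boundary contribution
\begin{equation*}
\langle\nabla V(p,r),w\rangle=-\int_{S(p,r)}\langle\nabla_p d(p,q),w\rangle\,dA(q).
\end{equation*}
Because $M$ has no conjugate points and no cut points, the global identity $\nabla_p d(p,q)=-\exp_p^{-1}(q)/d(p,q)$ holds off the diagonal, which on $S(p,r)$ becomes $-\exp_p^{-1}(q)/r$. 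Substituting and letting $w$ vary yields $\nabla V(p,r)=\tfrac{1}{r}\int_{S(p,r)}\exp_p^{-1}(q)\,dA(q)=\tfrac{1}{r}\partial_r H(p,r)$. The main technical point is this first-variation step: its validity hinges on the smoothness of $p\mapsto d(p,q)$ off the diagonal and on the uniform gradient formula for $\nabla_p d$, both of which are exactly what the infinite injectivity radius hypothesis supplies, and with those in hand the two boundary expressions match immediately to give $\nabla V(p,r)-\tfrac{1}{r}\partial_r H(p,r)=0$.
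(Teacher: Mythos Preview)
Your argument is correct. Both computations---the polar-coordinate differentiation of $H(p,r)$ and the first-variation computation of $\nabla V(p,r)$---are carried out properly, and the identification $\nabla_p d(p,q)=-\exp_p^{-1}(q)/d(p,q)$ is exactly the consequence of infinite injectivity radius that makes the boundary integral well defined and the two expressions agree.

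It is worth noting that the paper does not actually prove this lemma: it is quoted from Todjihounde \cite{T.08} and used as a black box. So there is no in-paper proof to compare against directly. That said, your approach is closely related to the machinery the paper develops later in Theorem~\ref{IDM}: there, the derivative of $\int_{B(c(t),r)}u\,d\mu$ is computed via the one-parameter family of diffeomorphisms $f_t=\exp_{c(t)}\circ P_t\circ\exp_p^{-1}$ and reduced to the boundary integral $\int_{S(p,r)}u\cos\theta_x\,d\sigma$. Specialising that computation to $u\equiv 1$ reproduces precisely your formula for $\langle\nabla V(p,r),x\rangle$, since $\cos\theta_x=\langle x,\exp_p^{-1}(q)\rangle/r$ on $S(p,r)$. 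Your first-variation argument is a cleaner and more direct route to the same identity, avoiding the auxiliary Jacobi-field construction; the paper's $f_t$-method is set up to handle general integrands $u$, which is why it is phrased that way.
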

We recall definition of  angle as defined in \cite{RS.02b}.  
\begin{Def}
Let $p \in M$ and $x \in S_{p}M$. Define {\it angle} as in \cite{RS.02b}, the function:
\begin{eqnarray*}
\theta_{x} : M  \setminus {p} &\rightarrow& \R \\
  q &\rightarrow & \theta_x(q) = \angle_p(x, v),
\end{eqnarray*}
where $\gamma_{v}$ is the unique geodesic joining $p$ to $q$.
\end{Def}

In \cite{RS.02b} the integral formula for the derivative of harmonic functions was proved which led to the proof of Liouville Theorem for HM.

\begin{Thm}\label{HL}
Let $u : M \rightarrow \R$ be a harmonic function on a harmonic manifold $M$.
Then for $x \in S_{p}M$,
\begin{eqnarray}\label{harder}
xu(p) = \frac{1}{V(p,r)}\int_{S(p,r)} u \cos{\theta_{x}} \; d \sigma, 
\end{eqnarray} 
where $d \sigma$ denotes the volume of $S(p,r)$. 
\end{Thm}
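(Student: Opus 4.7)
The plan is to differentiate the Willmore ball mean-value identity for $u$ and convert the resulting derivative of a moving-ball integral into a boundary flux. On a harmonic manifold the density $\Theta_{q}(v,s)=\Theta(s)$ is direction-independent, so $V(q,r)$ is independent of $q$; call it $V(r)$. By Willmore's theorem every harmonic $u$ then satisfies the ball mean-value property
$$u(q)=\frac{1}{V(r)}\int_{B(q,r)} u\,d\mu,\qquad q\in M.$$
Setting $q=\gamma_{x}(t)$, with $\gamma_{x}$ the geodesic with $\gamma_{x}'(0)=x$, and differentiating at $t=0$ reduces the theorem to the moving-domain identity
$$\frac{d}{dt}\bigg|_{t=0}\int_{B(\gamma_{x}(t),r)} u\,d\mu\;=\;\int_{S(p,r)} u\cos\theta_{x}\,d\sigma. \qquad (\ast)$$

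For $(\ast)$ I would introduce the diffeomorphism $\psi_{t}\colon B(p,r)\to B(\gamma_{x}(t),r)$ given by $\psi_{t}(\exp_{p}(sv))=\exp_{\gamma_{x}(t)}(sP_{t}v)$, where $P_{t}$ is parallel transport along $\gamma_{x}$. In geodesic polar coordinates the Riemannian volume reads $\Theta(s)\,ds\,dv$ at both endpoints, and because $\Theta$ depends only on $s$ the Jacobian of $\psi_{t}$ is identically $1$. Hence a change of variables yields
$$\int_{B(\gamma_{x}(t),r)} u\,d\mu=\int_{B(p,r)}(u\circ\psi_{t})\,d\mu,$$
so its $t$-derivative at $t=0$ equals $\int_{B(p,r)}\langle\nabla u,V\rangle\,d\mu$ with $V=\partial_{t}\psi_{t}|_{t=0}$. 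Along the geodesic $\gamma_{v}$ issuing from $p$, $V$ is the Jacobi field with $V(0)=x$ and $V'(0)=0$. Two identities make the endgame automatic: (i) $\langle V,\gamma_{v}'\rangle$ is affine in arc length with slope $0$ and intercept $\langle x,v\rangle$ (since $\langle R(V,\gamma_{v}')\gamma_{v}',\gamma_{v}'\rangle=0$), so $\langle V,\nu\rangle=\cos\theta_{x}$ on $S(p,r)$; and (ii) $\operatorname{div} V\equiv 0$, because $V$ generates the volume-preserving family $\psi_{t}$. The divergence theorem applied to $\langle\nabla u,V\rangle=\operatorname{div}(uV)-u\operatorname{div} V$ then produces exactly the right-hand side of $(\ast)$.

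The main obstacle is the Jacobian computation for $\psi_{t}$, together with its dual statement $\operatorname{div} V=0$: this is where the full force of the harmonic hypothesis on $M$ enters, since it is precisely the direction-independence of $\Theta$ that permits the polar-coordinate volume form to pull back unchanged under $\psi_{t}$. On a general Riemannian manifold this pullback would produce interior terms depending on angular derivatives of $\Theta$, and the clean boundary expression with the $\cos\theta_{x}$ weight would be lost. Once these two structural facts are in hand, the theorem is immediate from the first-variation formula for $\langle V,\gamma_{v}'\rangle$ and a one-line application of the divergence theorem.
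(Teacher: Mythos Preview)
Your argument is correct and follows essentially the same route as the paper (which recovers Theorem~\ref{HL} from the more general Theorem~\ref{IDM}, itself proved ``using techniques of proof of Theorem~2.1 of \cite{RS.02b}''). Both introduce the same family $\psi_t=\exp_{c(t)}\circ P_t\circ\exp_p^{-1}$, identify its infinitesimal generator along each radial geodesic as the Jacobi field $J$ with $J(0)=x$, $J'(0)=0$, observe that $\langle J,\partial/\partial r\rangle=\cos\theta_x$, and convert the bulk derivative into the boundary integral $\int_{S(p,r)} u\cos\theta_x\,d\sigma$.

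The only cosmetic difference is in the last step: the paper writes $\frac{d}{dt}\int f_t^*(u\,d\mu)\big|_{t=0}=\int L_J(u\,d\mu)$ and applies Cartan's formula plus Stokes to obtain $\int_{S(p,r)} i_J(u\,d\mu)$, whereas you exploit the harmonic hypothesis more directly to get $\det d\psi_t\equiv 1$ for \emph{all} $t$ (hence $\operatorname{div} V=0$), and then apply the divergence theorem to $\langle\nabla u,V\rangle=\operatorname{div}(uV)$. Your observation that $\psi_t$ is volume-preserving for every $t$ on a harmonic manifold is a clean and slightly sharper use of the hypothesis than the paper needs; the Cartan--Stokes route, by contrast, never requires $\operatorname{div} J=0$ and is what allows the paper to extend the formula to non-harmonic manifolds in Theorem~\ref{IDM}.
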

The above integral formula was generalized in \cite{KP.13} to obtain the 
integral formula for the derivative of subharmonic functions on a harmonic manifold $M$.
\begin{Thm}\label{SHL}
Let $u : M \rightarrow \R$ be a subharmonic function on a harmonic manifold $M$.
Then for $x \in S_{p}M$,
\begin{eqnarray}\label{subharder}
\left<\nabla u(p), x\right> \leq \frac{1}{V(p,r)}\int_{S(p,r)} u \cos{\theta_{x}} \; d \sigma.
\end{eqnarray} 
\end{Thm}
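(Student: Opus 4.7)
The statement of Theorem~\ref{SHL} is the natural inequality analogue of the identity in Theorem~\ref{HL}, and the plan is to upgrade the latter by running the proof of \cite{RS.02b} with a subharmonic $u$ in place of a harmonic one and tracking the extra term that arises once $\Delta u$ is no longer zero. The standard route to Theorem~\ref{HL} goes through Green's second identity on $B(p,r)$ applied to $u$ and a carefully chosen auxiliary function $\psi$ with three properties: (i) $\psi|_{S(p,r)}\equiv 0$, killing the boundary term that involves $\partial_\nu u$; (ii) $\partial_\nu\psi|_{S(p,r)}$ is a positive constant multiple of $\cos\theta_x$, so that the remaining boundary term reproduces the right-hand side; (iii) $-\Delta\psi$ acts distributionally at $p$ as that same constant multiple of the directional derivative $u\mapsto\langle\nabla u(p),x\rangle$. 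On a harmonic manifold the density $\Theta_p(y,r)=\Theta(r)$ depends only on $r$, so in polar normal coordinates the Laplacian decouples as $\Delta=\partial_s^2+(\Theta'/\Theta)\,\partial_s+\Delta_{S(p,s)}$. Since $\cos\theta_x$ is a first-order spherical harmonic on each geodesic sphere $S(p,s)$ (i.e.\ $\Delta_{S(p,s)}\cos\theta_x=-\lambda(s)\cos\theta_x$ for a nonnegative function $\lambda$), the separated ansatz $\psi(q)=\rho(s(q))\cos\theta_x(q)$ reduces $\Delta\psi=0$ off $p$ to a linear second-order ODE in $\rho$, and the required $\psi$ is obtained by choosing the solution with $\rho(r)=0$ and the correct singularity as $s\to 0$.

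\textbf{Body integral from subharmonicity.} For subharmonic $u$ one applies the very same Green's identity on the annulus $B(p,r)\setminus\overline{B(p,\varepsilon)}$ and passes to the limit $\varepsilon\to 0$; the only difference from the harmonic case is that the term $\int_{B(p,r)}\psi\,\Delta u\,dV$ no longer vanishes. After fixing the normalisations so that the harmonic case reproduces Theorem~\ref{HL} exactly, the outcome is
\begin{equation*}
\frac{1}{V(p,r)}\int_{S(p,r)} u\,\cos\theta_x\,d\sigma \;=\; \langle\nabla u(p),x\rangle \;+\; \frac{1}{c\,V(p,r)}\int_{B(p,r)} \psi(q)\,\Delta u(q)\,dV(q),
\end{equation*}
with $c>0$ the normalising constant fixed by (ii)--(iii). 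Under the hypothesis $\Delta u\geq 0$, the desired inequality
\begin{equation*}
\langle\nabla u(p),x\rangle \;\leq\; \frac{1}{V(p,r)}\int_{S(p,r)} u\,\cos\theta_x\,d\sigma
\end{equation*}
then follows the moment one knows that the kernel $\psi$ is nonnegative on $B(p,r)$.

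\textbf{Main obstacle.} The crux is therefore to verify that the solution $\rho$ of the radial ODE $\rho''(s)+(\Theta'(s)/\Theta(s))\,\rho'(s)-\lambda(s)\,\rho(s)=0$, subject to $\rho(r)=0$ and the prescribed singular asymptotics as $s\to 0$, has constant sign on $(0,r)$, with that sign chosen to match the sign of $c$ so that $\psi/c\geq 0$. Positivity is a Sturm--Liouville / maximum-principle statement: because $\cos\theta_x$ is a first eigenfunction on each sphere one has $\lambda(s)\geq 0$, and because $\Theta(s)>0$ the ODE is in Sturm form with a one-signed Green's function, so the relevant solution can be expressed explicitly as an integral of $1/(\Theta(s)\,s^{n-1})$-type weights times a nonnegative factor and its sign read off directly. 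Subsidiary technical points are the removal of the singularity at $p$ by the annular limit $\varepsilon\to 0$ and the verification that the normalisation constant $c$ matches Theorem~\ref{HL}; once the positivity of $\psi/c$ is in hand, the rest of the argument is a routine adaptation of the proof of Theorem~\ref{HL}, with no new geometric input beyond the harmonicity of $M$.
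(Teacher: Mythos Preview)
Your proposal contains a fundamental gap. You set $\psi(q)=\rho(s(q))\cos\theta_x(q)$ and then assert that the inequality follows ``the moment one knows that the kernel $\psi$ is nonnegative on $B(p,r)$,'' reducing this to showing that the radial factor $\rho$ has constant sign on $(0,r)$. But even if $\rho$ has constant sign, $\psi$ does not: $\cos\theta_x$ is a first spherical harmonic on each geodesic sphere $S(p,s)$ and is odd under the antipodal map $v\mapsto -v$, hence changes sign on every such sphere. Consequently $\psi$ changes sign throughout $B(p,r)$, and one cannot conclude $\int_{B(p,r)}\psi\,\Delta u\,dV\ge 0$ from $\Delta u\ge 0$. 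The ``main obstacle'' paragraph establishes at most the sign of $\rho$, not of $\psi$, so the argument does not close.

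This is not a repairable detail: the statement as written cannot hold for every $x\in S_pM$. Replacing $x$ by $-x$ flips the sign of both $\langle\nabla u(p),x\rangle$ and $\cos\theta_x$, so if the inequality held for all directions it would force equality and collapse to Theorem~\ref{HL}. A direct check in $\mathbb{R}^n$ with the strictly subharmonic $u(y)=e^{y_1}$, $p=0$, $x=-e_1$ gives $\langle\nabla u(0),x\rangle=-1$ while the right-hand side equals $-(1+cr^2+\cdots)$ with $c>0$, so the stated inequality fails for small $r>0$. The paper itself gives no proof of Theorem~\ref{SHL}; it merely cites \cite{KP.13}. The likely resolution is that the formulation in \cite{KP.13} carries an additional hypothesis or a different shape (for instance an inequality for the spherical mean of $u$, or a bound on $|\nabla u(p)|$) that was lost in transcription here, and you should consult that reference before attempting a proof.
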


The integral formula in Theorem \ref{HL} above was generalized further in \cite{T.08}.

\begin{Thm}\label{min1}
Let $(M,g)$ be a non-compact, connected, complete Riemannian manifold with infinite injectivity radius. If $u$ is a function on $M$ satisfying the mean-value property, then
it holds:\\
For any real number $r > 0$, 
\begin{eqnarray}\label{der}
xu(p) = \frac{1}{V(p,r)}\int_{S(p,r)} u \cos{\theta_{x}} \; d \sigma 
- \frac{1}{r}\frac{u(p)}{V(p,r)} \left<\nabla V(p,r), x \right>.  
\end{eqnarray}   
\end{Thm}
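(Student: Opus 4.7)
The strategy is to differentiate the mean-value identity $u(p)\,V(p,r) = \int_{B(p,r)} u\,d\mu$ with respect to the center $p$ in direction $x \in T_pM$, and to identify the boundary contribution that arises because the domain of integration moves with $p$. The infinite injectivity radius guarantees that a single geodesic polar chart at $p$ covers all of $M$, so the whole computation can be carried out in one coordinate system without concern for cut loci.

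First I apply the directional derivative $x$ to both sides of the identity $u(p)\,V(p,r) = \int_{B(p,r)} u\,d\mu$. The Leibniz rule on the left yields
\[xu(p)\cdot V(p,r) + u(p)\,\langle \nabla V(p,r), x\rangle,\]
so the problem reduces to computing the first variation of $\int_{B(p,r)} u\,d\mu$ in the center $p$.

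To evaluate this variation, I set $p_\varepsilon := \exp_p(\varepsilon x)$ and describe $B(p_\varepsilon, r)$ in geodesic polar coordinates centered at $p$. For $q = \exp_p(s\xi)$ with $\xi \in S_pM$, the first variation formula for arc length, applied to the minimizing geodesic from $p$ to $q$, gives
\[d(p_\varepsilon, q) = s - \varepsilon\,\langle x, \xi\rangle + O(\varepsilon^2) = s - \varepsilon\cos\theta_x(q) + O(\varepsilon^2),\]
so $B(p_\varepsilon, r)$ is, to first order in $\varepsilon$, the polar region $\{(\xi, s) : 0 \le s < r + \varepsilon\cos\theta_x(\xi)\}$. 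Writing the difference of integrals in this chart,
\[\int_{B(p_\varepsilon, r)} u\,d\mu - \int_{B(p, r)} u\,d\mu = \int_{S_pM}\!\int_r^{r + \varepsilon \cos\theta_x(\xi)}\! u(\exp_p(s\xi))\,\Theta_p(\xi, s)\,ds\,d\xi + O(\varepsilon^2),\]
and dividing by $\varepsilon$ and sending $\varepsilon \to 0$ produces
\[x\!\int_{B(p,r)} u\,d\mu = \int_{S_pM} u(\exp_p(r\xi))\cos\theta_x(\xi)\,\Theta_p(\xi, r)\,d\xi = \int_{S(p,r)} u\cos\theta_x\,d\sigma.\]
Substituting this into the differentiated mean-value identity and solving for $xu(p)$ yields the stated formula; the precise coefficient in front of $\langle \nabla V(p,r), x\rangle$ can be read off by invoking Lemma \ref{derv}, which relates $\nabla V(p,r)$ to $\tfrac{1}{r}\partial_r H(p,r)$ and permits rewriting the second term in the form asserted in the statement.

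The main technical obstacle is justifying the interchange of the $\varepsilon$-limit with the spherical integration and controlling the $O(\varepsilon^2)$ error in the first variation of distance uniformly over all $\xi \in S_pM$. Both steps are routine under the given hypotheses: the infinite injectivity radius ensures $\exp_p$ is a diffeomorphism so that $\Theta_p$ is smooth on the chart, the assumption that $u$ satisfies the mean-value property provides the local regularity needed to bound $u$ in a neighborhood of $\overline{B(p,r)}$, and a dominated convergence argument delivers the required limit exchange. A sanity check with $u\equiv 1$ reproduces the identity $\langle \nabla V(p,r),x\rangle = \int_{S(p,r)}\cos\theta_x\,d\sigma$, confirming consistency with Lemma \ref{derv}.
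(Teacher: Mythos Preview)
Your argument is correct and takes a genuinely different route from the one the paper uses. The paper does not prove Theorem~\ref{min1} directly (it is quoted from \cite{T.08}), but its proof of the generalization Theorem~\ref{IDM} is the relevant comparison. There the derivative of $\int_{B(c(t),r)} u\,d\mu$ is computed by pulling back through the one-parameter family of diffeomorphisms $f_t=\exp_{c(t)}\circ P_t\circ\exp_p^{-1}$, identifying the variation field as the Jacobi field $J$ along each radial geodesic with $J(0)=x$, $J'(0)=0$, and then using the Lie derivative together with Stokes' theorem to reduce to a boundary integral; the identity $\langle J,\partial_r\rangle=\cos\theta_x$ comes from the Jacobi initial data. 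Your approach instead keeps the polar chart fixed at $p$, uses the first variation of arc length to describe $\partial B(p_\varepsilon,r)$ as the graph $s=r+\varepsilon\cos\theta_x+O(\varepsilon^2)$, and reads off the boundary integral from the thin shell between the two balls. Both methods give the same boundary term $\int_{S(p,r)} u\cos\theta_x\,d\sigma$; yours is more elementary and avoids the Jacobi field machinery, while the paper's method has the advantage of working verbatim for any $C^1$ function (no mean-value hypothesis) since the pull-back makes the domain independent of $t$.

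One point to clean up: your direct computation yields
\[
xu(p)=\frac{1}{V(p,r)}\int_{S(p,r)} u\cos\theta_x\,d\sigma-\frac{u(p)}{V(p,r)}\langle\nabla V(p,r),x\rangle,
\]
which is exactly Theorem~\ref{IDM} specialized to $A_{u,r}(p)=u(p)$. The extra factor $1/r$ in the displayed statement of Theorem~\ref{min1} is not produced by Lemma~\ref{derv}: that lemma says $\nabla V(p,r)=\tfrac{1}{r}\partial_r H(p,r)$, so it lets you rewrite the second term as $-\tfrac{u(p)}{rV(p,r)}\langle\partial_r H(p,r),x\rangle$, not as $-\tfrac{1}{r}\tfrac{u(p)}{V(p,r)}\langle\nabla V(p,r),x\rangle$. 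Your own sanity check with $u\equiv 1$ confirms the version \emph{without} the $1/r$: it gives $\langle\nabla V(p,r),x\rangle=\int_{S(p,r)}\cos\theta_x\,d\sigma$, consistent with Lemma~\ref{derv}, whereas the formula as printed would force $\langle\nabla V(p,r),x\rangle=r\int_{S(p,r)}\cos\theta_x\,d\sigma$. So drop the appeal to Lemma~\ref{derv} at the end and state the coefficient you actually derived; the $1/r$ in the quoted statement is a transcription artifact.
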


\begin{Cor}\label{minhor}
Let $(M,g)$ be a non-compact, connected  and complete Riemannian manifold with minimal
horospheres and infinite injectivity radius. Any bounded function on $(M,g)$
satisfying mean value property is constant. 
\end{Cor}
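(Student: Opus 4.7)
My approach is a direct application of Theorem \ref{min1}, exploiting the minimal horosphere hypothesis to kill both terms on the right-hand side. Let $u$ be a bounded function on $M$ with $|u|\le C$ satisfying the mean value property. For any $p\in M$ and any $x\in S_pM$, Theorem \ref{min1} at scale $r$ reads
\[
xu(p) \;=\; \frac{1}{V(p,r)}\int_{S(p,r)} u\cos\theta_x\,d\sigma \;-\; \frac{u(p)}{r\,V(p,r)}\langle\nabla V(p,r),x\rangle,
\]
and I plan to send $r\to\infty$ and show that each summand vanishes in the limit.

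The first summand is immediately bounded in absolute value by $C\,A(p,r)/V(p,r)$. For the second, Lemma \ref{derv} gives $\nabla V(p,r)=\tfrac{1}{r}\,\partial_r H(p,r)$. Since $H(p,r)=\int_{B(p,r)}\exp_p^{-1}(q)\,d\mu(q)$, the coarea formula yields $\partial_r H(p,r)=\int_{S(p,r)}\exp_p^{-1}(q)\,d\sigma(q)$, and because $|\exp_p^{-1}(q)|=r$ on the sphere $S(p,r)$ (by the infinite injectivity radius hypothesis), I obtain $|\nabla V(p,r)|\le A(p,r)$. So the second summand is bounded by $\tfrac{C}{r}\cdot\tfrac{A(p,r)}{V(p,r)}$, which is even better than the first. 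It therefore suffices to prove $A(p,r)/V(p,r)\to 0$ as $r\to\infty$.

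For the latter, the minimal horospheres hypothesis combined with formula \eqref{hor} says $\Theta'_p(v,r)/\Theta_p(v,r)\to 0$ as $r\to\infty$ for every $v\in S_pM$; by smoothness in $v$ and compactness of $S_pM$ this convergence is uniform in $v$. Integrating $A(p,r)=\int_{S_pM}\Theta_p(v,r)\,dv$, for every $\epsilon>0$ there is $R$ with $A'(p,r)\le\epsilon\,A(p,r)$ for $r\ge R$. A Gr\"onwall/ODE comparison then gives $A(p,r)\le A(p,R)e^{\epsilon(r-R)}$ while $V(p,r)\ge \int_R^r A(p,s)\,ds\ge A(p,R)\,(e^{\epsilon(r-R)}-1)/\epsilon$, so $\limsup_{r\to\infty}A(p,r)/V(p,r)\le\epsilon$; letting $\epsilon\downarrow 0$ closes the argument. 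Therefore $xu(p)=0$ for all $x\in S_pM$ and $p\in M$, so $\nabla u\equiv 0$ and $u$ is constant.

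The main obstacle I anticipate is the step promoting pointwise decay of $\Theta'_p(v,r)/\Theta_p(v,r)$ on each ray to the integrated decay of $A'(p,r)/A(p,r)$; one must ensure the convergence is uniform (or at least dominated) on $S_pM$, which is cheap in the AHM setting due to the constant universal mean curvature $h=0$ of horospheres, but should be stated carefully so that the same argument covers the more general setting promised by Theorem \ref{ffs}.
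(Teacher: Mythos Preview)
Your approach is correct and is the one the paper uses: the paper does not prove this corollary separately (it is quoted from \cite{T.08}), but its proof of the more general Theorem~\ref{SLTP} follows exactly your plan---bound both summands of the identity in Theorem~\ref{min1} by a constant multiple of $A(p,r)/V(p,r)$ via Lemma~\ref{derv}, and then argue that this ratio goes to zero.

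There is, however, a genuine error in your Gr\"onwall step. From $A'(p,r)\le\epsilon\,A(p,r)$ you correctly deduce the upper bound $A(p,r)\le A(p,R)\,e^{\epsilon(r-R)}$, but you then assert
\[
\int_R^r A(p,s)\,ds \;\ge\; A(p,R)\,\frac{e^{\epsilon(r-R)}-1}{\epsilon}.
\]
That inequality would require the \emph{lower} bound $A(p,s)\ge A(p,R)\,e^{\epsilon(s-R)}$, which you do not have; an upper bound on the integrand cannot produce a lower bound on the integral. (Relatedly, the earlier assertion that ``smoothness in $v$ and compactness of $S_pM$'' forces uniform convergence of $\Theta'_p/\Theta_p$ is not valid in general---pointwise convergence of a continuous family on a compact set need not be uniform---and you correctly flag this yourself as an obstacle at the end.)

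The repair is simpler than Gr\"onwall and also sidesteps the uniformity concern. Since $xu(p)$ is independent of $r$, you only need $\liminf_{r\to\infty}A(p,r)/V(p,r)=0$. Write $A/V=V'/V=(\log V)'$; if this stayed bounded below by some $c>0$ for all large $r$, then $V(p,r)$ would grow at least like $e^{cr}$, contradicting subexponential volume growth. Hence along a sequence $r_k\to\infty$ the right-hand side of Theorem~\ref{min1} tends to zero, giving $xu(p)=0$. This is exactly the content of the paper's display~\eqref{sub} (written there as a full limit, though only the $\liminf$ is justified and only the $\liminf$ is needed). The residual question---whether ``minimal horospheres'' alone forces subexponential growth without a uniformity hypothesis on $\Theta'_p/\Theta_p$---is the same caveat you already isolated, and it is automatic in the AHM setting.
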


\subsection{Integral formula for the derivative of mean value of $C^1$ function}
In this subsection, we prove our main result viz., the Strong Liouville Type Property,
for a complete, connected, non-compact Riemannian manifold of infinite injectivity radius and of subexponential volume growth. In particular, we obtain the desired result for AHM. \\\\
First we prove the integral formula for the derivative of mean value of a $C^1$  function on $(M,g)$, a complete, connected, non-compact Riemannian manifold of infinite injectivity radius. The proof uses techniques of the proof of Theorem \ref{min1}
of \cite{T.08} and proof of Theorem \ref{HL} of \cite{RS.02b}.

\begin{Thm}\label{IDM}
Let $(M,g)$ be a complete, connected, non-compact Riemannian manifold of infinite injectivity radius. Let $u$ be a $C^1$ differentiable function on $M$. Then the derivative of mean value of $u$ satisfies the integral formula: 
\begin{eqnarray}\label{mu}
x.A_{u,r}(p) & = & -\frac{A_{u,r}(p)}{V(p,r)} \left<\nabla V(p,r), x \right>  \\ \nonumber
& + & \frac{1}{V(p,r)} \int_{S(p,r)} u \cos \theta_{x} \;d \sigma.
\end{eqnarray}
\end{Thm}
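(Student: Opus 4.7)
The plan is to differentiate the quotient $A_{u,r}(p) = V(p,r)^{-1}\int_{B(p,r)} u\, d\mu$ via the Leibniz rule and reduce \eqref{mu} to a single Reynolds-type boundary identity. Writing $F(p) := \int_{B(p,r)} u\, d\mu$, the product rule together with $x\cdot V(p,r)^{-1} = -V(p,r)^{-2}\langle\nabla V(p,r), x\rangle$ immediately produces the first term of \eqref{mu}, so the entire theorem collapses to establishing
\[
    x\cdot F(p) \;=\; \int_{S(p,r)} u\cos\theta_x\, d\sigma.
\]

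To prove this identity I would compare $F(p_s)$ with $F(p)$ along the geodesic variation of the basepoint $p_s := \exp_p(sx)$. Because $M$ has infinite injectivity radius, the function $\rho(q) := d(p,q)$ is smooth on $M\sm\{p\}$ with $|\nabla\rho|=1$; consequently the outward unit normal $\nu(q) := \nabla\rho(q)$ along $S(p,r)$ is well defined and the map $\Psi(q,t) := \exp_q(t\,\nu(q))$ is a smooth diffeomorphism of $S(p,r)\times(-\de,\de)$ onto a tubular neighborhood of $S(p,r)$, with unit Jacobian on $\{t=0\}$. The first variation of the distance function yields, uniformly for $Q$ in this neighborhood,
\[
    d(p_s, Q) = d(p,Q) - s\cos\theta_x(Q) + o(s),
\]
where $\theta_x(Q) := \angle_p(x,\exp_p^{-1}(Q)/|\exp_p^{-1}(Q)|)$. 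Hence in the $(q,t)$ coordinates the symmetric difference $B(p_s,r)\triangle B(p,r)$ is, fiberwise over $q\in S(p,r)$, the signed interval between $t=0$ and $t = s\cos\theta_x(q) + o(s)$. Integrating $u$ over this shell, with the coarea factor which equals $1$ to leading order, produces $F(p_s) - F(p) = s\int_{S(p,r)} u\cos\theta_x\, d\sigma + o(s)$, which is precisely the desired identity.

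Substituting back into the Leibniz expression completes the proof of \eqref{mu}. I expect the main obstacle to be the uniform control of the $o(s)$ remainders across the compact sphere $S(p,r)$: one needs bounded second derivatives of $\rho$ on a full neighborhood of $S(p,r)$ and a uniform Jacobian estimate for the tubular chart $\Psi$. Both are delivered by the infinite injectivity radius hypothesis, which makes $\rho$ smooth on all of $M\sm\{p\}$. This is exactly the geometric input that underlies the proofs of Theorem \ref{min1} of \cite{T.08} and Theorem \ref{HL} of \cite{RS.02b}, so the present argument is a direct geometric adaptation of those techniques — notably simpler in that no mean-value property, harmonicity, or integration by parts against $u$ is invoked, since $u$ is merely $C^1$.
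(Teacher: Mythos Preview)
Your argument is correct, and the overall architecture matches the paper's: both apply the Leibniz rule to the quotient $A_{u,r}=V^{-1}\int_{B}u$, which immediately produces the first term of \eqref{mu} and reduces everything to the single boundary identity
\[
\frac{d}{dt}\Big|_{t=0}\int_{B(c(t),r)} u\,d\mu \;=\; \int_{S(p,r)} u\cos\theta_x\,d\sigma .
\]
Where you diverge is in the proof of this identity. The paper constructs an explicit one-parameter family of diffeomorphisms $f_t=\exp_{c(t)}\circ P_t\circ\exp_p^{-1}$ carrying $B(p,r)$ onto $B(c(t),r)$, pulls back, and applies Cartan's formula together with Stokes' theorem; the infinitesimal generator $J$ is, along each radial geodesic, the Jacobi field with $J(0)=x$, $J'(0)=0$, and the radial component $\langle J,\partial_r\rangle=\cos\theta_x$ is what survives on $S(p,r)$. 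You instead run a Reynolds-transport argument: first variation of distance gives $d(p_s,Q)=d(p,Q)-s\cos\theta_x(Q)+o(s)$, so the symmetric difference $B(p_s,r)\triangle B(p,r)$ is a thin signed shell of normal width $s\cos\theta_x$ over $S(p,r)$, and integrating $u$ against the tubular Jacobian (which is $1+O(t)$) yields the result. Your approach is more elementary in that it avoids Jacobi fields, Lie derivatives and Stokes altogether, at the cost of having to justify the uniform $o(s)$ control --- which, as you note, follows from compactness of $S(p,r)$ and smoothness of $\rho$ away from $p$. The paper's approach packages this analytic bookkeeping into the diffeomorphism $f_t$, and has the side benefit of identifying the variational field $J$ explicitly, which connects the formula to the Jacobi-field techniques of \cite{RS.02b}.
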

\begin{proof}
Let $u$ be a $C^1$ function on $M$. 
Fix $p \in M$ and $r >0$. We have:
\begin{equation}\label{mean}
A_{u,r}(p) = \frac{1}{V(p,r)} \int_{B(p,r)}u \; d\mu.  
\end{equation}
Let $c$ be the geodesic with $c(0) = p$  and $c'(0) = x$.
Then, 
\begin{eqnarray*}
x.A_{u,r}(p) & = & \frac{d}{dt} A_{u,r}(c(t))|_{t=0} \\ \nonumber
&=&\frac{d}{dt}\left(\frac{1}{V(c(t),r)} \int_{{B(c(t),r)}} u d \mu \right)_{|t = 0}.
\end{eqnarray*}
Consequently,
\begin{eqnarray}\label{md}
x.A_{u,r}(p)
&=&-\frac{1}{V(p,r)^2} <\nabla V(p,r), x> \int_{B(p,r)} u \; d \mu  \\ \nonumber
& + &\frac{1}{V(p,r)} \frac{d}{dt} \left(\int_{{B(c(t),r)}} u \; d \mu \right)_{|t = 0} 
\end{eqnarray}
From (\ref{mean}) we obtain
\begin{eqnarray}\label{grad} 
\frac{1}{V(p,r)^2} <\nabla V(p,r), x> \int_{B(p,r)} u \; d \mu & = & \frac{A_{u,r}(p)}{V(p,r)} \left<\nabla V(p,r), x \right>. 
\end{eqnarray}
Using techniques of proof of Theorem 2.1 of \cite{RS.02b} we will show that
\begin{eqnarray}\label{angle}
\frac{d}{dt} \left(\int_{B(c(t),r)} u d \mu \right)_{|t = 0} =  \int_{S(p,r)} u \cos \theta_{x} \;d \sigma.
\end{eqnarray}
Finally, substituting (\ref{grad}) and (\ref{angle}) in (\ref{md}) we obtain the 
required integral formula (\ref{mu}). \\\\
Now it remains to prove (\ref{angle}).\\\\
Consider one parameter family of diffeomorphisms of M,
as in \cite{RS.02b},
given by 
\begin{eqnarray*}
f_{t} = exp_{c(t)}\circ P_{t}\circ exp_{p}^{-1} : M 
\rightarrow M. 
\end{eqnarray*}  
Let $J_x = J$ be the vector field induced by $f_t$ at time $t = 0$. Since $f_t$ maps radial geodesic starting from $p$ to geodesics starting from $c(t)$, J satisfies the following property which we shall make use of:\\
$J$ is the unique Jacobi field satisfying $J(0) = x, J'(0)= 0$, when restricted to any geodesic starting from $p$.\\\\
As $f_t(B(p,r)) = B(c(t),r),$ we obtain  
\begin{eqnarray*}
\left(\int_{B(c(t),r)} u d \mu \right) & = & \left(\int_{B(p,r)} {f_t}^*(u d\mu)\right)  =  \left(\int_{B(p,r)} {f_t}^*(u) {f_t}^*(d\mu)\right) 
\end{eqnarray*}
We have ${f_t}^*(d\mu) = |\det df_{t}|\; d\mu.$ 
But as $J$ satisfies $J(0) = x, J'(0)= 0$, when restricted to any geodesic starting from $p$; the Jacobian of ${f_t}$ at $t = 0$ viz., $|\det {df_{t}}_{|t = 0}| = 1$,
and $\displaystyle\frac{d}{dt} \det {df_{t}}_{|_{t = 0}} = 0.$
Therefore,
\begin{eqnarray*}
\frac{d}{dt} \left(\int_{B(c(t),r)} u d \mu \right)_{|t = 0} & = &  
\frac{d}{dt}\left(\int_{B(p,r)}({f_t}^*(u\;d\mu))  \right)_{|t = 0}\\
& = & \int_{B(p,r)} \frac{d}{dt}({f_t}^*(u))_{|t = 0} \; d\mu
\end{eqnarray*}
Consequently,
\begin{eqnarray*}
\frac{d}{dt} \left(\int_{B(c(t),r)} u d \mu \right)_{|t = 0} & = & 
 \int_{B(p,r)} L_{J}u \; d \mu =  \int_{B(p,r)} L_{J}{(ud \mu)} \\
 & = & \int_{S(p,r)} i_{J}{(ud \mu)}, 
\end{eqnarray*}
where $i_{J}$ denotes the interior product with the field $J$.
However, $\omega = dr \wedge d\sigma$ and $dr|_{S(p,r)} = 0$. 
Therefore, $i_{J}(u\omega) = u i_{J}(\omega)$. Also 
$i_{J}dr = <J, \frac{\partial}{\partial r}>$.  Hence, 
$i_{J}(u\omega) =  u <J, \frac{\partial}{\partial r}> d\sigma$ and we obtain,
\begin{eqnarray*}
\frac{d}{dt} \left(\int_{B(c(t),r)} u d \mu \right)_{|t = 0} & = & 
\int_{S(p,r)}  u <J, \frac{\partial}{\partial r}> d\sigma.
\end{eqnarray*}
Since $J$ is a Jacobi field with $J(0) = x, J'(0) = 0$, its parallel component is
\begin{eqnarray*}
<J, \frac{\partial}{\partial r}> & = & <J(0), v> + <v, J'(0)>r
 =  \cos \theta_{x}.
\end{eqnarray*} 
Finally we obtain,
\begin{eqnarray*}
\frac{d}{dt} \left(\int_{B(c(t),r)} u d \mu \right)_{|t = 0} =  \int_{S(p,r)} u \cos \theta_{x} \;d \sigma.
\end{eqnarray*}
\end{proof}

\begin{Cor}
If $u$ is a harmonic function on a harmonic manifold $(M,g)$, then we recover
Theorem \ref{HL}. 
\end{Cor}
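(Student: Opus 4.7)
The corollary asks us to recover Theorem \ref{HL} from the newly established Theorem \ref{IDM} in the special case of a harmonic function $u$ on a harmonic manifold $(M,g)$. The plan is to apply the general integral formula \eqref{mu} and show that, under these hypotheses, the ``correction'' term involving $\nabla V(p,r)$ drops out while the mean value on the left hand side collapses to $u(p)$ itself.

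The key observation is the mean value property for harmonic functions on harmonic manifolds (Willmore, \cite{W.50}): if $u$ is harmonic, then $A_{u,r}(p) = u(p)$ for every $p$ and every $r$. This immediately identifies the left hand side of \eqref{mu} with $x \cdot u(p) = xu(p)$, and simultaneously lets us replace $A_{u,r}(p)$ in the first term of the right hand side by $u(p)$. Thus Theorem \ref{IDM} yields
\[
xu(p) = -\frac{u(p)}{V(p,r)}\bigl\langle \nabla V(p,r), x \bigr\rangle + \frac{1}{V(p,r)} \int_{S(p,r)} u \cos \theta_{x} \; d\sigma.
\]

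Next I would invoke the defining property of a harmonic manifold: the volume density $\Theta_p(v,r)$ depends only on $r$, and consequently the volume $V(p,r)$ of the ball of radius $r$ is a function of $r$ alone, independent of the center $p$. Therefore the gradient (in the basepoint variable) $\nabla V(p,r)$ vanishes identically, and the first term on the right collapses to zero. What remains is precisely the formula \eqref{harder} of Theorem \ref{HL}, completing the proof.

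There is no real obstacle here; the corollary is essentially a one-line check once one notes the two structural facts on harmonic manifolds, namely the mean value property and the center-independence of $V(p,r)$. The only subtlety worth flagging in the write-up is that both facts are consequences of the same underlying feature (constancy of the volume density on geodesic spheres), so one could also dispose of the correction term by noting that, as $V(p,r)$ reduces to a radial function, its directional derivative in any tangent direction at $p$ is zero.
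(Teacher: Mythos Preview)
Your proposal is correct and follows essentially the same approach as the paper: invoke the mean value property (so $A_{u,r}(p)=u(p)$) and the fact that on a harmonic manifold the volume density, and hence $V(p,r)$, is independent of the basepoint (so $\nabla V(p,r)=0$), then read off \eqref{harder} from \eqref{mu}. The paper's own proof is just these two observations stated in two sentences.
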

\begin{proof}
By a characterization of harmonic manifolds, any harmonic function on a harmonic manifold satisfies mean value property. And the volume density function $\Theta_{p}$
is independent of point $p \in M$. Therefore, in this case $\nabla V(p,r) = 0$
and $A_{u,r}(p) = u(p)$. Therefore, the conclusion follows from the above
Theorem \ref{IDM}. 
\end{proof}

Now we can prove our main theorem of this section.

\begin{Thm}\label{SLTP}
If $(M,g)$ is a complete, connected, non-compact Riemannian manifold of infinite injectivity radius and of subexponential volume growth, then there are no non-trivial bounded subharmonic functions on $(M,g)$. In particular, an  AHM also has no non-trivial bounded subharmonic functions.           
\end{Thm}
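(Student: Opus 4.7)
The plan is to apply the integral formula of Theorem \ref{IDM} to a bounded subharmonic function $u$ on $M$ and extract constancy by letting $r\to\infty$, using the subexponential volume growth hypothesis. Normalize so that $|u|\leq M_0$ and substitute $u$ into Theorem \ref{IDM}, then take absolute values. Using $|A_{u,r}(p)|,|u|\leq M_0$ together with the bound $|\nabla V(p,r)|\leq A(p,r)$ (immediate from Lemma \ref{derv} and the definition $H(p,r)=\int_{B(p,r)}\exp_p^{-1}(q)\,d\mu(q)$, since $|\exp_p^{-1}(q)|=r$ on $S(p,r)$), one obtains the uniform estimate
$$|x\cdot A_{u,r}(p)|\leq 2M_0\,\frac{A(p,r)}{V(p,r)}.$$

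Next, the subexponential growth hypothesis rules out $A(p,r)/V(p,r)$ staying bounded below by a positive constant, for otherwise $(\log V)'(r)=A(p,r)/V(p,r)\geq c>0$ would integrate to exponential growth of $V(p,r)$. Thus there is a sequence $r_n\to\infty$ along which $A(p,r_n)/V(p,r_n)\to 0$, so $x\cdot A_{u,r_n}(p)\to 0$ for every $x\in S_pM$ and every $p\in M$. Since $u$ is subharmonic, the ball-mean $A_{u,r}(p)$ is non-decreasing in $r$ (a consequence of Green's identity on $B(p,r)$, which gives monotonicity of spherical means and hence of the ball means), and is bounded by $M_0$, so it converges to a limit $\bar u(p)\geq u(p)$. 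Combining pointwise convergence with the vanishing gradient bound — made locally uniform in $p$ via the same inequality — one deduces that $\bar u\equiv c$ is a constant.

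Finally, to upgrade from $\bar u\equiv c$ and $u\leq c$ to the pointwise identity $u\equiv c$, I would consider the non-negative superharmonic function $v=c-u\geq 0$ and apply the same machinery: $A_{v,r}(p)=c-A_{u,r}(p)\to 0$ while $A_{v,r}$ is non-increasing in $r$ and $v\geq 0$. The strong maximum/minimum principle (Theorem \ref{smax}) combined with this averaging behaviour should then force $v\equiv 0$, giving $u\equiv c$.

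The hard part will be this last passage, namely translating the averaged statement $A_{u,r}(p)\to c$ together with the inequality $u\leq c$ into pointwise equality. This essentially amounts to a parabolicity-type property — every bounded, non-negative, superharmonic function on $M$ is constant — which one hopes to extract from subexponential growth together with infinite injectivity radius. The quantitative form of Theorem \ref{IDM} applied to $v$ is the tool I would use to close this loop, but without an additional geometric input (such as the minimal horosphere structure that AHMs enjoy), making the last step rigorous on an arbitrary subexponential-growth manifold is where the real difficulty lies.
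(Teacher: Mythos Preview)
Your first half is exactly the paper's: apply Theorem \ref{IDM}, bound $|\nabla V(p,r)|\le A(p,r)$ via Lemma \ref{derv} and $|\exp_p^{-1}(q)|=r$ on $S(p,r)$, and arrive at $|x\cdot A_{u,r}(p)|\le 2M_0\,A(p,r)/V(p,r)$. (The paper asserts $A/V\to 0$ as a full limit by a L'H\^opital step through $\Theta'/\Theta\to 0$; your sequential version is the more honest statement for general subexponential growth, and is what is actually forced by the hypothesis.)

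Where you diverge from the paper is precisely the endgame you flag as incomplete. The paper does \emph{not} pass to the pointwise limit $\bar u(p)=\lim_r A_{u,r}(p)$ and then try to show that the non-negative superharmonic $v=c-u$ vanishes; as you correctly sense, that is essentially a parabolicity statement which is not on the table. Instead the paper adds one more estimate you did not compute: differentiating $V(p,r)\,A_{u,r}(p)=\int_{B(p,r)}u$ with respect to $r$ yields
\[
\bigl|A_{u,r}'(p)\bigr|\;\le\;2\alpha\,\frac{A(p,r)}{V(p,r)}\;\longrightarrow\;0.
\]
With both the spatial gradient and the $r$-derivative of $A_{u,r}$ tending to zero, the paper concludes that $A_{u,r}(p)$ is independent of $p$ and of $r$ for all sufficiently large $r$, hence equal to some constant $C$, and from this that $u\equiv C$ on all sufficiently large spheres $\partial B(p,r)$. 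The final move is then Theorem \ref{smax} in the following form: for a subharmonic $u$, the boundary maximum $\max_{\partial B(p,r)}u$ is non-decreasing in $r$ (positive outward normal derivative at a boundary maximizer), so $u\le C$ on every smaller ball as well; a subharmonic function attaining its global supremum is constant.

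So the concrete missing ingredient in your plan is the direct $r$-derivative bound on $A_{u,r}$, together with the use of the strong maximum principle to propagate constancy from large spheres inward. Your monotonicity-in-$r$ observation is correct but, on its own, only delivers $u\le \bar u\equiv c$; the paper's route avoids the parabolicity question entirely by controlling $A_{u,r}$ as a function of \emph{both} variables and then invoking Theorem \ref{smax}.
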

\begin{proof}
Let $u$ be a bounded subharmonic function on $M$. Then there exists a constant
$\alpha > 0$ such that $|u| \leq \alpha$. From the above integral formula, $\forall p \in M \; \mbox{and} \; r > 0$ and Lemma \ref{derv} we obtain,
\begin{eqnarray}\label{u}
||x.A_{u,r}(p)|| \leq \frac{\alpha}{V(p,r)}||\frac{1}{r}\frac{\partial}{\partial r} H(p,r)|| + \alpha \frac{A(p,r)}{V(p,r)}. 
\end{eqnarray} 
 But:\\
\begin{eqnarray*}
||\frac{1}{r}\frac{\partial}{\partial r} H(p,r)|| & = & ||\frac{1}{r}\frac{\partial}{\partial r} \int_{B(p,r)} exp_{p}^{-1}(q) d \mu(q)||\\
& = & ||\frac{1}{r}\frac{\partial}{\partial r} \int_{S(p,r)} exp_{p}^{-1}(q) d \sigma(q)||\\
& \leq & \frac{1}{r} \int_{S(p,r)} ||exp_{p}^{-1}(q)|| d \sigma(q)\\
& = & A(p,r),\; \mbox{since}\; ||exp_{p}^{-1}(q)|| = r,\; \forall q \in S(p,r).
\end{eqnarray*}
Consequently, 
\begin{eqnarray}\label{u}
||x.A_{u,r}(p)|| \leq 2 \alpha \frac{A(p,r)}{V(p,r)}.
\end{eqnarray}
We have,
\begin{eqnarray}\label{sub}
\lim_{r \rightarrow \infty} \frac{A(p,r)}{V(p,r)} \leq  \lim_{r \rightarrow \infty} \frac{\int_{S}\Theta_{p}(r,u) d\sigma}{\int_{0}^{r} \int_{S}\Theta_{p}(r,u) dr d\sigma}  = \lim_{r \rightarrow \infty} \frac{\Theta_{p}'(r,u)}{\Theta_{p}(r,u)}=0.
\end{eqnarray} 
Hence, for any $p \in M$ and for any $x \in S_{p}M$,
\begin{eqnarray}\label{mader}
||x.A_{u,r}(p)|| = 0 \; \mbox{as}\; r \rightarrow \infty.
\end{eqnarray}
We have, 
\begin{eqnarray}\label{mdr}
V(p,r) A_{u,r}(p) = \int_{B(p,r)}u\;d\mu =  \int_{0}^{r} \int_{S} u(r,\phi)\Theta_{p}(r,\phi)\;dr d\phi.  
\end{eqnarray}
Therefore, differentiating (\ref{mdr}) with respect to $r$, 
\begin{eqnarray}
V'(p,r)A_{u,r}(p) + A_{u,r}'(p)V(p,r) = 
\int_{S} u(r,\phi)\Theta_{p}(r,\phi)\;dr d\phi.   
\end{eqnarray}
Therefore,
\begin{eqnarray}
 A_{u,r}'(p) =
\frac{1}{V(p,r)} \int_{S} u(r,\phi)\Theta_{p}(r,\phi)\;dr d\phi - \frac{V'(p,r)}{V(p,r)}A_{u,r}(p)   
\end{eqnarray}
As
$$V(p,r) = \displaystyle \int_{0}^{r} \int_{S} \Theta_{p}(r,\phi)\;dr d\phi,$$
we have
\noindent
$V'(p,r) = \displaystyle \int_{S} \Theta_{p}(r,\phi)\;dr d\phi = A(S(p,r)).$
Consequently,
\begin{eqnarray}
|A_{u,r}'(p)| \leq 2 \alpha \frac{A(p,r)}{V(p,r)}. 
\end{eqnarray}
From (\ref{sub}) it follows that 
\begin{eqnarray}\label{rdr}
\lim_{r \rightarrow \infty} |A_{u,r}'(p)| = 0.
\end{eqnarray}
Thus  from  (\ref{mader}) and (\ref{rdr}), $A_{u,r}(p)$ is independent of 
$p \in M$ and also independent of $r$ for sufficiently large $r$. Hence, this clearly implies that $u$ is  constant on all balls of sufficiently large radii.\\
We obtain that $u(x)= C = \mbox{constant},\;\; \forall {x \in B(p,r), r \geq N,}$
for $N$ sufficiently large.   
By the maximum principle of subharmonic functions,
$$\max_{x \in B(p,r)} u(x) = \max_{y \in \partial B(p,r)} u(y) = u(z),\; 
\mbox{for some $z$ with r=d(p,z)}.$$ 
Now by the stronger version of the maximum principle for the subharmonic 
functions, Theorem \ref{smax}, we obtain :
$$\displaystyle \max_{x \in B(p,r), r < N} u(x) = \max_{x \in \partial B(p,r), r < N}  u(x) \leq \max_{x \in \partial B(p,r), r \geq N} u(x) = C.$$ 
Therefore, for all $x \in M$, we have $u(x) \leq C$ i.e. $u$ attains global 
maximum on $M$.  Therefore, $u$ is  a constant function on $M$, again by the maximum 
principle for subharmonic functions.    
\end{proof}

We observe that Liouville Property holds on an AHM, in  comparison  with  
an HM.  

\begin{Cor} \label{Lio1} 
$(M,g)$ be a complete, connected, non-compact Riemannian manifold of infinite injectivity radius and of subexponential volume growth. Then $(M,g)$ satisfies Liouville Property on $M$. In particular, an  AHM also satisfies Liouville Property.   
\end{Cor}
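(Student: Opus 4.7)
\emph{Proof plan.} The statement follows almost immediately from Theorem \ref{SLTP}. The key observation is that every harmonic function $u$ on $M$ satisfies $\Delta u = 0 \geq 0$, so $u$ is in particular a subharmonic function. Consequently, any bounded harmonic function is a bounded subharmonic function, and Theorem \ref{SLTP} then forces it to be constant. This is precisely the Liouville Property of Definition \ref{sl}(2).

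For the ``in particular'' clause about AHM's, I would verify that the two hypotheses of Theorem \ref{SLTP} hold on any AHM. Infinite injectivity radius is built into the standing assumptions of the paper: an AH manifold is complete, simply connected, and without conjugate points, so by the Cartan--Hadamard theorem $\exp_p \colon T_pM \to M$ is a diffeomorphism for every $p \in M$, and in particular the injectivity radius at each point is $+\infty$. For subexponential volume growth, the defining identity $\Delta b_v^{\pm} \equiv h = 0$ together with equation (\ref{hor}) gives
\[
\lim_{r \to \infty} \frac{\Theta_p'(v,r)}{\Theta_p(v,r)} \; = \; h \; = \; 0.
\]
Integrating this asymptotic (given any $\ep>0$, we have $\Theta_p'/\Theta_p < \ep$ for $r$ large) yields $\log \Theta_p(v,r)/r \to 0$, and hence $\log V(p,r)/r \to 0$, which is exactly the definition of subexponential volume growth.

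There is essentially no obstacle here: all of the substantive analytic work has already been carried out in Theorem \ref{SLTP}, and upstream in the integral formula Theorem \ref{IDM}. The corollary is a one-line unpacking of definitions (harmonic $\Rightarrow$ subharmonic) together with the routine verification that AHM's satisfy the two hypotheses of the stronger theorem. The only point that merits a sentence of care is the passage from $\Theta_p'/\Theta_p \to 0$ to subexponentiality of $V(p,r)$, but this is standard.
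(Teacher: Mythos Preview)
Your proposal is correct and matches the paper's approach: the corollary is stated without proof because it follows immediately from Theorem~\ref{SLTP} via the observation that bounded harmonic implies bounded subharmonic, and the paper has already noted (just after the definition of volume growth types) that an AHM has a~priori subexponential volume growth, with infinite injectivity radius coming from the standing Cartan--Hadamard assumptions.
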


\begin{Cor}\label{Lio2} 
$(M,g)$ be a complete, connected, non-compact Riemannian manifold of infinite injectivity radius and of subexponential volume growth. Then $(M,g)$ satisfies Strong Liouville Type Property. In particular, an  AHM also satisfies Strong Liouville Type Property.
\end{Cor}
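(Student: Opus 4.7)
The plan is to reduce Corollary \ref{Lio2} to Theorem \ref{SLTP} via the classical exponential substitution, which converts a subharmonic function bounded above into a (strictly positive, hence) two-sided bounded subharmonic function to which Theorem \ref{SLTP} applies directly.

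Concretely, let $u$ be a subharmonic function on $M$ with $u \leq C$ for some constant $C$. Set $v := e^{u}$. Since $u$ is smooth (the standing convention in the paper) and $\Delta u \geq 0$, a direct computation gives
\begin{equation*}
\Delta v \;=\; e^{u}\bigl(\Delta u + |\nabla u|^{2}\bigr) \;\geq\; 0,
\end{equation*}
so $v$ is also subharmonic; this is the well-known fact that the composition of a subharmonic function with a $C^{2}$ convex increasing function is subharmonic. Moreover $0 < v \leq e^{C}$, so $v$ is bounded. Applying Theorem \ref{SLTP} to $v$ I conclude that $v$ is constant on $M$, and therefore $u$ is constant. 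This establishes the Strong Liouville Type Property for the class of manifolds in question.

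The \emph{in particular} assertion for AHMs is then immediate: by the paper's standing convention on AH manifolds, $M$ is complete, simply connected and without conjugate points, so Cartan--Hadamard gives that $\exp_{p} : T_{p}M \to M$ is a diffeomorphism for every $p$ and hence $M$ has infinite injectivity radius; the subexponential volume growth of an AHM was noted just after the definition of volume growth type. Both hypotheses of the first part therefore hold.

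I do not expect a substantive obstacle here: the $e^{u}$ trick does all the work once Theorem \ref{SLTP} is in hand. The only point requiring minor care is the smoothness regularity needed to compute $\Delta e^{u}$ pointwise; under the paper's smoothness convention on subharmonic test functions this is automatic, and for a merely distributional subharmonic $u$ the same trick is standard via mollification followed by a limit, using that the mollified functions remain subharmonic and converge locally uniformly.
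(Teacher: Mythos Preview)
Your proof is correct and is essentially identical to the paper's own argument: the paper also sets $g(x)=e^{u(x)}$, computes $\Delta g=e^{u}(\Delta u+\|\nabla u\|^{2})\ge 0$, observes that $g$ is bounded and non-negative, and then invokes Theorem~\ref{SLTP} to conclude that $g$, and hence $u$, is constant. Your additional remarks on regularity and the verification that AHMs satisfy the hypotheses are reasonable elaborations, but the core argument matches the paper exactly.
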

\begin{proof}
If $u$  is  a  subharmonic function on $M$ bounded from  above, then
$g(x)  =  e^{u(x)}$  is a bounded  non-negative  subharmonic  function on  $M$,  as
$\Delta  g  =   e^{u(x)}  (\Delta  u  +   || \nabla u||^2)   \geq  0$. Therefore,  by  Theorem \ref{SLTP},  $g$  is  a  constant  function and in turn,  $u$ is  a  constant  function.
\end{proof}

\begin{Cor}\label{bdder} 
$(M,g)$ be a complete, connected, non-compact Riemannian manifold of infinite injectivity radius and of exponential volume growth. 
If $u$ is a bounded function on $M$ satisfying mean value property, then the derivative of $u$ is also bounded. In particular, harmonic functions on harmonic manifolds
of exponential volume growth satisfy this property.  
\end{Cor}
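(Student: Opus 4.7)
The plan is to apply the integral formula of Theorem \ref{min1} directly and then bound both summands using only the boundedness of $u$. Since $u$ satisfies the mean value property, for every $p\in M$, every $x\in S_pM$ and every $r>0$ we have
$$
xu(p) \;=\; \frac{1}{V(p,r)}\int_{S(p,r)} u\cos\theta_x\, d\sigma \;-\; \frac{u(p)}{r\,V(p,r)}\,\langle \nabla V(p,r),x\rangle.
$$
Because $|u|\leq\alpha$ for some $\alpha>0$ and $|\cos\theta_x|\leq 1$, the first term is bounded in absolute value by $\alpha\,A(p,r)/V(p,r)$. For the second term I would invoke Lemma \ref{derv} to rewrite $\nabla V(p,r)=\frac{1}{r}\frac{\partial}{\partial r}H(p,r)$ and then repeat the computation already appearing in the proof of Theorem \ref{SLTP}: differentiating $H(p,r)$ under the integral sign and using $\|\exp_p^{-1}(q)\|=r$ for $q\in S(p,r)$ yields $\|\frac{1}{r}\frac{\partial}{\partial r}H(p,r)\|\leq A(p,r)$. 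Combining both estimates gives
$$
|xu(p)| \;\leq\; 2\alpha\,\frac{A(p,r)}{V(p,r)}\qquad \text{for all } r>0,\; x\in S_pM.
$$

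The final step is to let $r\to\infty$ and use the exponential volume growth hypothesis. Unlike the subexponential case of Theorem \ref{SLTP}, where $A(p,r)/V(p,r)\to 0$ killed the derivative entirely, here I only need that this ratio stays bounded. Taking $\liminf_{r\to\infty}$ of the right-hand side therefore produces a finite upper bound for $|xu(p)|$ uniform in $p$ and in $x\in S_pM$, hence a uniform bound on $\|\nabla u\|$. For the second assertion, harmonic functions on a harmonic manifold automatically satisfy the mean value property, and on a harmonic manifold the density $\Theta_p(r,u)=\Theta(r)$ is radial, so $A(p,r)/V(p,r)=V'(r)/V(r)$, whose limit is precisely the asymptotic mean curvature $h$ of the horospheres; in the exponential growth regime this $h$ is a positive finite constant, giving $\|\nabla u\|\leq 2\alpha h$ globally.

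The main obstacle is the last step, namely ensuring that the ratio $A(p,r)/V(p,r)$ really remains bounded under the chosen definition of exponential volume growth. The condition $V(p,r)\geq c^r$ by itself only gives a lower bound and in principle allows super-exponential behavior along subsequences, on which $A/V$ could blow up. The argument therefore has to be applied in contexts (such as harmonic manifolds, or more generally manifolds admitting the asymptotic Riccati control) where both an upper exponential bound and the convergence of $V'/V$ are available. In that setting the estimate above is sharp and the corollary follows immediately.
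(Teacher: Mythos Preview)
Your argument is essentially the paper's own: it too invokes the estimate $|xu(p)|\le 2\alpha\,A(p,r)/V(p,r)$ from the proof of Theorem~\ref{SLTP} (which, under the mean value property, is exactly your application of Theorem~\ref{min1} together with Lemma~\ref{derv}) and then passes to the limit using $\lim_{r\to\infty}\Theta_p'/\Theta_p=h>0$ to obtain $|xu(p)|\le 2\alpha h$. Your caveat about the bare lower bound $V(p,r)\ge c^r$ not by itself controlling $A/V$ is well taken; the paper's proof implicitly assumes the same convergence of $\Theta'/\Theta$ that you isolate, so you have not introduced any extra hypothesis beyond what the paper uses.
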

\begin{proof}
As $(M,g)$ is of exponential volume growth, we have 
\begin{eqnarray*}
\lim_{r \rightarrow \infty} \frac{A(p,r)}{V(p,r)} \leq  \lim_{r \rightarrow \infty} \frac{\int_{S}\Theta_{p}(r,u) d\sigma}{\int_{0}^{r} \int_{S}\Theta_{p}(r,u) dr d\sigma}  = \lim_{r \rightarrow \infty} \frac{\Theta_{p}'(r,u)}{\Theta_{p}(r,u)}=h >0.
\end{eqnarray*}
Therefore, from the proof of Theorem \ref{SLTP}, $||xA_{u,p}(r)|| = |xu(p)| \leq 2 \alpha h,$ for all $p \in M$.
\end{proof}

\noindent
Now  we  compare  results  of  this  section  with  some similar
type of  results  on   Liouville   Property.\\

\noindent
Li and  Wang \cite{LW.99} have  proved the Liouville property in
the set up of the theorem given below. 
\begin{Thm}\label{m} $(M,g)$ be a complete manifold satisfying the
mean value inequality
$\cM(\lambda)$ i.e. ${\cM}_R(\lambda), \; \forall R$.
If $\lambda < 2$ and $M$ has
subexponential volume growth, then
bounded harmonic functions are constants.
\end{Thm}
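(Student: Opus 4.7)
The plan is to derive a contradiction from assuming that $u$ is a non-constant bounded harmonic function on $M$. After an affine normalization one may assume $\inf_M u = 0$ and $\sup_M u = 1$, so that both $u$ and $1-u$ are non-negative harmonic (and therefore subharmonic) functions on $M$ to which the mean value inequality $\cM_R(\lambda)$ applies on balls of every radius.

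Fix two points $p, q \in M$ and set $D = d(p,q)$. The key computation is to apply $\cM(\lambda)$ to $u$ at $p$ on the ball of radius $r$ and to $1-u$ at $q$ on the ball of radius $r+D$, then use the inclusion $B(p,r) \subset B(q, r+D)$ to relate the resulting integrals. This produces
\begin{align*}
u(p) &\leq \frac{\lambda}{V(p,r)} \int_{B(p,r)} u \, d\mu \leq \frac{\lambda}{V(p,r)} \int_{B(q,r+D)} u \, d\mu, \\
1-u(q) &\leq \frac{\lambda}{V(q,r+D)} \int_{B(q,r+D)} (1-u) \, d\mu = \lambda - \frac{\lambda}{V(q,r+D)} \int_{B(q,r+D)} u \, d\mu.
\end{align*}
Eliminating the common integral gives the master estimate
\[
u(p) \leq \frac{V(q, r+D)}{V(p,r)}\bigl(\lambda - 1 + u(q)\bigr).
\]

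Next I would invoke subexponential volume growth via the elementary lemma that if $f(r) := \log V(p,r)$ satisfies $f(r)/r \to 0$ then $\liminf_{r \to \infty}[f(r+c) - f(r)] = 0$ for every fixed $c > 0$; otherwise iterating $f(r+c) - f(r) \geq \delta > 0$ would produce linear growth of $f$, contradicting $f(r)/r \to 0$. Combined with the inclusion $B(q, r+D) \subset B(p, r+2D)$, this yields a sequence $r_n \to \infty$ along which $V(q, r_n + D)/V(p, r_n) \to 1$. Passing to the limit in the master estimate gives $u(p) - u(q) \leq \lambda - 1$, and the symmetric argument obtained by swapping $p$ and $q$ yields $u(q) - u(p) \leq \lambda - 1$. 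Hence $|u(p) - u(q)| \leq \lambda - 1$ for every pair $p, q \in M$, so taking $p_n, q_n$ with $u(p_n) \to 1$ and $u(q_n) \to 0$ forces $\lambda - 1 \geq 1$, i.e.\ $\lambda \geq 2$, contradicting the hypothesis $\lambda < 2$.

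The main obstacle is the volume-ratio lemma: upgrading subexponentiality from a statement about $\log V(p,r)/r$ to the concrete assertion that $V(q, r+D)/V(p, r)$ approaches $1$ along a subsequence. This is handled by replacing the two-base-point ratio by the monotone one-base-point ratio $V(p, r+2D)/V(p, r)$, where the subadditive argument applies cleanly. The rest of the proof is a bookkeeping exercise designed so that the unknown integral of $u$ cancels and only the hypothesis $\lambda < 2$ is needed to close the contradiction.
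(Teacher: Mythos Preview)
Your argument is correct and is essentially the standard Li--Wang proof. Note, however, that the paper does not supply its own proof of this theorem: it is quoted as a result of Li and Wang \cite{LW.99} and stated without proof, so there is no in-paper argument to compare against. What you have written is a faithful reconstruction of the Li--Wang approach: normalize $u$ to have oscillation $1$, apply $\cM(\lambda)$ to $u$ and to $1-u$ on nested balls, eliminate the common integral to obtain $u(p)-u(q)\le \tfrac{V(q,r+D)}{V(p,r)}(\lambda-1+u(q))-u(q)$, and then use subexponential growth to make the volume ratio tend to $1$ along a subsequence. One small remark: you should note explicitly that $\lambda\ge 1$ automatically (apply $\cM(\lambda)$ to the constant function $1$), so that $\lambda-1+u(q)\ge 0$ and the master inequality is not vacuous; and that the lower bound $V(q,r+D)/V(p,r)\ge 1$ follows from $B(p,r)\subset B(q,r+D)$, so the ratio is squeezed to $1$ along the chosen subsequence. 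With those two observations made explicit, the proof is complete.
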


In fact, a slight modification of their proof of Theorem \ref{m} in \cite{LW.99} shows:

\begin{Thm}\label{m1} $(M,g)$ be a complete manifold of
subexponential volume growth, then $(M,g)$ satisfies Strong Liouville Type Property.
\end{Thm}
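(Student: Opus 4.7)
The plan is to adapt the Li--Wang proof of Theorem \ref{m} so that it applies to subharmonic functions bounded from above, instead of merely bounded harmonic ones. The main adjustment is a convexity reduction, followed by a careful comparison of two nearby values using a sub-mean-value estimate and the subexponential volume growth.

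\textbf{Step 1 (Reduction to bounded non-negative subharmonic).} Suppose $u$ is subharmonic on $M$ with $\sup_M u=C<\infty$. Put $f=e^{u-C}$. Then $0<f\le 1$ and
$$\Delta f = f\bigl(\Delta u + |\nabla u|^{2}\bigr)\ge 0,$$
so $f$ is a bounded non-negative subharmonic function. (This is the same convexity trick already used in the proof of Corollary \ref{Lio2}.) It is therefore enough to show that every bounded non-negative subharmonic function on $M$ is constant, and then read off the conclusion for $u$.

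\textbf{Step 2 (Sub-mean-value inequality).} Apply to $f$ the weak mean-value inequality ${\cW\cM}(\lambda)$ that underlies the Li--Wang argument: there is an absolute constant $\lambda>0$ (independent of $R$) such that
$$f(x)\le \frac{\lambda}{V(x,R)}\int_{B(x,R)} f\,d\mu
\qquad\text{for every }x\in M,\; R>0.$$
The same bound is valid at any other point $y\in M$.

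\textbf{Step 3 (Comparing two nearby values).} Fix $x,y\in M$ with $d(x,y)=\rho$, and let $R\gg\rho$. Then $B(x,R)\,\triangle\, B(y,R)\subset B(x,R+\rho)\setminus B(x,R-\rho)$. Applying the sub-mean-value estimate of Step 2 at both $x$ and $y$ and using $0\le f\le 1$ yields, after a short computation replacing integrals on $B(x,R)$ and $B(y,R)$ by integrals on $B(x,R-\rho)$ up to the symmetric-difference error,
$$|f(x)-f(y)|\;\le\;\lambda\cdot \frac{V(x,R+\rho)-V(x,R-\rho)}{V(x,R-\rho)}.$$

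\textbf{Step 4 (Killing the right-hand side by subexponential growth).} If there were $\delta>0$ with $V(x,R+\rho)\ge(1+\delta)V(x,R-\rho)$ for all large $R$, iterating would give $V(x,\cdot)$ geometric growth along the arithmetic progression $R-\rho,\,R+\rho,\,R+3\rho,\ldots$, contradicting $\log V(x,R)/R\to 0$. Hence there is a sequence $R_k\to\infty$ along which $V(x,R_k+\rho)/V(x,R_k-\rho)\to 1$. Substituting $R=R_k$ in the inequality of Step 3 forces $f(x)=f(y)$. Since $x,y$ are arbitrary, $f$ is constant on $M$, and therefore so is $u$. This is the Strong Liouville Type Property.

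\textbf{Main obstacle.} The delicate step is Step 3: one needs an absolute constant $\lambda$, independent of $R$, so that the volume-ratio factor on the right is what really drives the limit. In Li and Wang this is precisely what ${\cW\cM}(\lambda)$ with $\lambda<2$ guarantees. The content of the ``slight modification'' is therefore to verify that the sub-mean-value estimate of Step 2, applied to the bounded non-negative subharmonic functions produced by the exponential reduction in Step 1, is available with an $R$-independent constant under the standing completeness hypothesis, and to check that the resulting comparison estimate of Step 3 closes without needing the sharper bound $\lambda<2$ required in the harmonic case. Everything else reduces to elementary properties of subexponential growth.
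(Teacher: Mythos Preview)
The paper gives no proof of Theorem~\ref{m1}; it only asserts that a ``slight modification'' of Li--Wang's argument for Theorem~\ref{m} suffices. Your reconstruction---the exponential reduction of Step~1 followed by the Li--Wang volume comparison---is the natural reading, and Step~1 is correct (it is exactly the trick the paper itself uses in Corollary~\ref{Lio2}). Step~4 is also fine. But Steps~2 and~3 both have genuine gaps, and the one in Step~3 is more serious than the one you flag.

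On Step~2 you already note the problem: the mean-value inequality $\cM(\lambda)$ is an explicit \emph{hypothesis} of Li--Wang's Theorem~\ref{m}, not a consequence of completeness. There is no general mechanism producing an $R$-independent sub-mean-value constant on an arbitrary complete manifold, so the sentence ``is available \ldots\ under the standing completeness hypothesis'' is not justified. Most likely the paper intends the mean-value hypothesis to be silently retained, and the ``slight modification'' is only your Step~1.

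The deeper issue is Step~3. The Li--Wang comparison for bounded \emph{harmonic} $u$ works because one applies the mean-value inequality to the two non-negative harmonic functions $1+u$ and $1-u$; this yields matched upper bounds on $u(x)-u(y)$ and on $u(y)-u(x)$, and the condition $\lambda<2$ is exactly what makes the resulting iteration contract. For your bounded non-negative \emph{subharmonic} $f$, the function $1-f$ is superharmonic, not subharmonic, so only one of the two inequalities survives. A sub-mean-value bound gives
\[
f(x)\ \le\ \frac{\lambda}{V(x,R)}\int_{B(x,R)} f,\qquad
f(y)\ \le\ \frac{\lambda}{V(y,R)}\int_{B(y,R)} f,
\]
and two upper bounds cannot be subtracted to control $|f(x)-f(y)|$; the displayed estimate in Step~3 does not follow. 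Thus even granting Step~2, the argument as written does not close, and the ``slight modification'' you propose does not actually carry Li--Wang's proof from the harmonic to the subharmonic case.
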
  

\begin{Thm}\label{l1}
Let $(M,g)$ be a manifold of subexponential volume growth, then 
any non-negative superharmonic function $u \in L^1(M)$ is  constant. 
\end{Thm}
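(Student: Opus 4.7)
The plan is to reduce the $L^1$ hypothesis to a bounded-case hypothesis by truncation, and then invoke the Strong Liouville Type Property of Theorem \ref{m1}. For any constant $c > 0$, set $v_c := \min(u, c)$. Since $u$ is superharmonic and the constant $c$ is superharmonic, the pointwise minimum $v_c$ is again superharmonic (a standard fact: the infimum of two superharmonic functions is superharmonic), and it satisfies $0 \le v_c \le c$. Hence $-v_c$ is a subharmonic function bounded from above by $0$, so Theorem \ref{m1} applies and forces $-v_c$, and therefore $v_c$, to be a constant $d_c \in [0, c]$.

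Next I would analyze the two possibilities for $d_c$. If $d_c < c$ for some single choice of $c > 0$, then $\min(u(x), c) = d_c < c$ at every $x \in M$ forces $u(x) = d_c$ everywhere, so $u \equiv d_c$ and we are done. If, on the contrary, $d_c = c$ for every $c > 0$, then $v_c \equiv c$ means $u(x) \ge c$ pointwise for every $c > 0$, which is absurd for a finite-valued function. The $L^1$ hypothesis gives a sharper form of the same contradiction, namely $c \cdot V(M) \le \int_M u\, d\mu = \|u\|_{L^1(M)}$ for every $c > 0$: this is impossible whenever $V(M) = \infty$, and letting $c \to \infty$ is impossible when $V(M) < \infty$ as well. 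Thus only the first case can occur, and $u$ is constant.

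The main obstacle I anticipate is relying on the full strength of Theorem \ref{m1}, which in the paper is obtained only as a "slight modification" of the Li--Wang argument in \cite{LW.99} and whose hypotheses are weaker than those of the proven Theorem \ref{SLTP}. If a self-contained proof that genuinely uses $u \in L^1(M)$ is preferred, the alternative route is to apply the integral formula for the derivative of the mean value from Theorem \ref{IDM} to $u$: the $L^1$ hypothesis implies $\int_0^{\infty} \int_{S(p,r)} u\, d\sigma\, dr = \|u\|_{L^1(M)} < \infty$, so $\int_{S(p,r)} u\, d\sigma \to 0$ along a subsequence $r_k \to \infty$, while subexponential volume growth gives $A(p,r)/V(p,r) \to 0$; combining these in (\ref{mu}) should force $A_{u,r_k}(p)$ to be asymptotically independent of $p$ (and tending to $0$), from which the constancy of $u$ follows by a minimum-principle argument analogous to the end of the proof of Theorem \ref{SLTP}.
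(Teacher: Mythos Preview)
Your first argument is correct, but it can be simplified: since $u\ge 0$ is superharmonic, $-u$ is already subharmonic and bounded above by $0$, so Theorem~\ref{m1} applies directly to $-u$ without any truncation. Your case analysis on $d_c$ is then unnecessary. Note in particular that your route never actually uses the hypothesis $u\in L^1(M)$; this is consistent, since the Strong Liouville Type Property is a strictly stronger statement than the $L^1$-Liouville property.

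The paper proceeds quite differently: it does not argue at all, but simply cites Theorem~13.2 of Grigor'yan~\cite{G.96}, which establishes the $L^1$-Liouville property directly for manifolds of subexponential volume growth. So your approach is genuinely different and in a sense more internal to the paper, deducing Theorem~\ref{l1} as an immediate corollary of Theorem~\ref{m1}. The trade-off is that Theorem~\ref{m1} itself is only asserted in the paper (as ``a slight modification'' of the Li--Wang argument in~\cite{LW.99}), not proved, so you are leaning on one black box rather than another. Your alternative route through Theorem~\ref{IDM} would require the additional hypothesis of infinite injectivity radius, which Theorem~\ref{l1} as stated does not assume, so that second approach would prove a weaker statement.
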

\begin{proof}
The proof follows from Theorem 13.2 of \cite{G.96}.
\end{proof}

\begin{itemize}
\item[1)] Note that for manifolds of subexponential volume growth, 
we obtain  Liouville  property as an application of Theorem \ref{m1}.
{\it On the other hand Liouville  property, Corollary  \ref{Lio1}, 
obtained here by direct method and  also  without using the 
mean value property, as opposed to the usual practice}.
\item[2)] Note that the Strong Liouville Type Property for harmonic manifolds 
follows from Theorem \ref{SHL} directly. But, the proof of Theorem \ref{SHL}
 \cite{KP.13} uses harmonicity heavily. On the other hand Theorem 
\ref{IDM} is quite general and suffices to prove Strong Liouville Property
on manifolds of subexponential volume growth.
\item[3)] Note that conclusion of Corollary \ref{bdder} need not imply
that $u$ is a constant function. In fact, Strong Liouville Property {\it does not
hold on AH manifolds with constant $h >0$.} \cite{RS.02b} shows that on harmonic manifolds of $h >0$, for each $v \in S_{p}M$, 
$$h_{v}(x) = - n \displaystyle\frac{\int_{0}^{r} \Theta(r)}{\Theta(r)} \cos \theta_{v}(x),$$ is a family of non-constant bounded harmonic functions. \\
\end{itemize}

\section{Existence of Killing Vector  Fields on AHM}

In  this  section,  we  show  that   any  Killing  vector  field   on  an  AHM,  $M$,  
is non-trivial and parallel. And thus, there exists a Killing  vector  field   on  an  AHM. Further, we  also  show  that  on  an   AH  manifold  of  $h >0$,
there does  not  exist  any  Killing  vector  field  of  constant 
length.\\

First  we  describe  some  preliminaries  on  Killing field.

\subsection{Killing  Vector  Fields}

\begin{Def}
A vector field $X$ on a Riemannian manifold $(M,g)$ is called a {\it Killing field}, if the local flows generated by $X$ acts by isometries. 
 Equivalently, $X$ is  a  Killing field if  and  only  if   $L_X g = 0$,  where $L$  denotes the Lie  derivative of  metric $g$  with respect to  $X$.  
 \end{Def}

Now  we  recall  the  following  results  about   Killing  vector fields.   Proofs  of  the results  can be  found in  \cite{P.98}.

\begin{Pro}\label{k1}
$X$  is  Killing  field  if  and  only  if  $v \rightarrow  \nabla_{v}X$ is a skew symmetric $(1,1)$-tensor.  
\end{Pro}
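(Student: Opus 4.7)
The plan is to unpack the equivalent definition $L_X g = 0$ and rewrite it in terms of the Levi-Civita connection, so that the skew-symmetry of $v \mapsto \nabla_v X$ drops out as a direct reformulation.

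First I would expand $(L_X g)(Y,Z)$ using the standard Lie-derivative formula
\begin{equation*}
(L_X g)(Y,Z) \;=\; X\,g(Y,Z) \;-\; g([X,Y],Z) \;-\; g(Y,[X,Z]).
\end{equation*}
Then I would substitute the two defining properties of the Levi-Civita connection: metric compatibility, which yields $X\,g(Y,Z) = g(\nabla_X Y,Z) + g(Y,\nabla_X Z)$, and the torsion-free identity $[X,Y] = \nabla_X Y - \nabla_Y X$ (likewise for $[X,Z]$). After the obvious cancellations one is left with
\begin{equation*}
(L_X g)(Y,Z) \;=\; g(\nabla_Y X, Z) \;+\; g(Y, \nabla_Z X).
\end{equation*}

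From this identity the proposition is immediate: $X$ is Killing, i.e.\ $(L_X g)(Y,Z) = 0$ for all vector fields $Y,Z$, if and only if $g(\nabla_Y X, Z) = -g(\nabla_Z X, Y)$ for all $Y,Z$, which is precisely the statement that the $(1,1)$-tensor $A\colon v \mapsto \nabla_v X$ is skew-symmetric with respect to $g$. The $C^\infty$-linearity of $v \mapsto \nabla_v X$ in $v$ (so that it genuinely is a tensor, not merely a differential operator) is automatic from the properties of $\nabla$, so no further work is needed there.

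There is no substantive obstacle here; the only thing to be careful about is bookkeeping the signs when substituting the torsion-free identity into the Lie bracket terms, and making clear that the identity must hold for \emph{all} pairs $(Y,Z)$ in order to draw the skew-symmetry conclusion pointwise. Since the computation is purely algebraic at each point, no AH hypothesis, Busemann function, or Liouville-type input is invoked — this proposition is a standard preliminary that will be used in the subsequent sections where Killing fields on AHM's are actually produced.
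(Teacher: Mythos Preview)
Your argument is correct and is exactly the standard proof: expand $(L_Xg)(Y,Z)$ via the Lie-derivative formula, substitute metric compatibility and torsion-freeness of $\nabla$, and read off $g(\nabla_Y X,Z)+g(Y,\nabla_Z X)=0$. The paper does not supply its own proof of this proposition; it simply records it as a known preliminary and refers to \cite{P.98}, so there is nothing further to compare.
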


\begin{Pro}\label{k2}
For  a  given $p  \in  M$,  a  Killing  vector  field  $X$  is  uniquely  determined  by  $X(p)$  and 
$(\nabla X)(p)$.
\end{Pro}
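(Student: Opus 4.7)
The plan is to show that the restriction of any Killing field $X$ to a geodesic of $M$ is a Jacobi field. Once this is in hand, the second-order linear Jacobi ODE along a geodesic $\gamma$ from $p$ uniquely determines $X \circ \gamma$ from the initial data $X(p)$ and $(\nabla X)(p) \cdot \gamma'(0)$, and completeness plus connectedness of $M$ then promotes this to global uniqueness via Hopf--Rinow, since every point of $M$ lies on some geodesic through $p$.

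The cleanest route to the Jacobi property is geometric. The local flow $\{\phi_t\}$ of $X$ acts by local isometries, hence sends geodesics to geodesics. Given a geodesic $\gamma$ with $\gamma(0)=p$ and $\gamma'(0)=v$, the two-parameter map $(s,t)\mapsto \phi_t(\gamma(s))$ is therefore a variation of $\gamma$ through geodesics, and its variation vector field along $\gamma$ is
\[
Y(s) \;=\; \tfrac{d}{dt}\Big|_{t=0}\phi_t(\gamma(s)) \;=\; X(\gamma(s)).
\]
Since variation fields through geodesics are Jacobi fields, $Y$ satisfies $Y'' + R(\gamma',Y)\gamma' = 0$. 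A purely infinitesimal alternative uses Proposition \ref{k1}: skew-symmetry of $\nabla X$ combined with the Killing identity yields $\nabla_{\gamma'}\nabla_{\gamma'}X = R(X,\gamma')\gamma'$ along a geodesic, which is the same equation. I would proceed with the flow-by-isometries argument because it makes the Jacobi property transparent.

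Since the Jacobi equation is a second-order linear ODE when $TM$ is trivialized along $\gamma$ by parallel transport, $Y$ is uniquely determined by its initial value and initial derivative. By construction $Y(0) = X(p)$ and $Y'(0) = \nabla_{\gamma'(0)}X\big|_p = (\nabla X)(p)\cdot v$, and both are fixed by the hypotheses of the proposition. Hence $X$ is determined along every geodesic issuing from $p$, and by completeness of $M$ every point of $M$ lies on such a geodesic, giving uniqueness of $X$ on all of $M$. The main technical nuisance will be justifying that the flow $\phi_t$ is defined on a large enough open set for the geodesic variation to make sense all along a given $\gamma$; one handles this routinely by covering a compact piece of $\gamma$ by finitely many neighborhoods on which $\phi_t$ is defined for a uniform small $t$, running the uniqueness argument on each piece, and patching the conclusions.
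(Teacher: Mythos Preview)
Your argument is correct and is essentially the standard textbook proof. Note, however, that the paper does not supply its own proof of this proposition: it is one of several preliminary facts about Killing fields (Propositions~\ref{k1}--\ref{delta}) for which the paper simply refers the reader to Petersen~\cite{P.98}. The ingredients you use---that a Killing field restricts to a Jacobi field along every geodesic, and the second-order Killing identity $\nabla^2_{Y,Y}X = -R(X,Y)Y$---appear separately in the paper as Proposition~\ref{J} and Lemma~\ref{hess-k}, again without proof. So your write-up is in fact more detailed than what the paper offers, and it matches the approach one finds in the cited reference.

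One small remark: your global step invokes Hopf--Rinow and completeness to reach every point by a geodesic from $p$. In the paper's setting (complete, simply connected, no conjugate points) this is of course fine. The version of the proposition that holds on an arbitrary connected Riemannian manifold is usually phrased via an open--closed argument on the set where two Killing fields (and their covariant derivatives) agree; your geodesic argument supplies openness in a normal neighborhood, and closedness is immediate, so the completeness hypothesis can be dropped if one reorganizes slightly. This is not needed for the paper's purposes.
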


\begin{Pro}\label{dim}
The set of Killing fields $iso(M,g)$ is a Lie algebra of dimension $\leq \frac{n(n+1)}{2}$. 
Furthermore, if $M$ is complete, then iso$(M,g)$ is the Lie  algebra of Iso$(M, g)$.
\end{Pro}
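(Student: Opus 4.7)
The plan is to reduce the dimension bound to a linear-algebraic count using the two preceding propositions, and to handle the completeness statement separately by upgrading Killing fields to global one-parameter isometry groups. Fix a basepoint $p \in M$ and consider the evaluation map
\begin{equation*}
\Phi_p : iso(M,g) \to T_pM \oplus \mathrm{End}(T_pM), \qquad \Phi_p(X) = \bigl(X(p),\, (\nabla X)(p)\bigr).
\end{equation*}
First I would verify that $iso(M,g)$ is in fact a Lie algebra: it is clearly a real vector space, and a direct computation using $L_Xg=0$ shows that $L_{[X,Y]}g = L_XL_Yg - L_YL_Xg = 0$, so the bracket of two Killing fields is again a Killing field.

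Next I would extract the dimension bound from Propositions \ref{k1} and \ref{k2}. By Proposition \ref{k2} the map $\Phi_p$ is injective. By Proposition \ref{k1} the endomorphism $v \mapsto \nabla_v X$ is skew-symmetric with respect to $g_p$, so the image of $\Phi_p$ lies in $T_pM \oplus \mathfrak{so}(T_pM,g_p)$. This subspace has dimension $n + \binom{n}{2} = \frac{n(n+1)}{2}$, which gives the bound $\dim iso(M,g) \le \frac{n(n+1)}{2}$ immediately.

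For the completeness statement I would show that when $(M,g)$ is complete, every Killing field $X$ is a complete vector field: the Killing equation forces $\|X\|$ to grow at most linearly along geodesics (one checks that $\frac{d}{dt}\|X(\gamma(t))\|^2 = 2g(\nabla_{\gamma'}X,X)$ and uses the skew-symmetry from Proposition \ref{k1} plus a Gronwall-type argument along geodesic rays, together with geodesic completeness), so its local flow extends to a global one-parameter subgroup $\varphi_t^X \subset \mathrm{Iso}(M,g)$. By Myers--Steenrod, $\mathrm{Iso}(M,g)$ is a Lie group, and the correspondence $X \leftrightarrow \{\varphi_t^X\}$ identifies $iso(M,g)$ as its Lie algebra.

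The main obstacle I anticipate is the completeness half: showing that the local flow of $X$ is defined for all time requires an a priori bound on $\|X\|$ along geodesics, and the Myers--Steenrod identification must be invoked to conclude that the abstract Lie algebra of Killing fields really is the Lie algebra of the isometry group (not merely a subalgebra). The dimension bound itself is routine once Propositions \ref{k1} and \ref{k2} are in hand.
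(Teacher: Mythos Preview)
The paper does not actually prove this proposition: it is listed among preliminary facts about Killing fields with the remark that ``Proofs of the results can be found in \cite{P.98}'' (Petersen's textbook). So there is no in-paper proof to compare against; your proposal is a reconstruction of the standard textbook argument.

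Your treatment of the dimension bound is correct and is exactly the standard proof: the evaluation map $\Phi_p$ into $T_pM \oplus \mathfrak{so}(T_pM, g_p)$ is injective by Proposition~\ref{k2}, and the target has dimension $n + \binom{n}{2} = \frac{n(n+1)}{2}$.

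Your completeness argument, however, has a genuine gap. The claim that $\|X\|$ grows at most linearly along geodesics is false without curvature hypotheses: on hyperbolic space, for instance, Killing fields restrict to Jacobi fields along geodesics and typically grow exponentially. No Gronwall estimate along geodesic rays will rescue this in general. The correct and much simpler fix is to apply the skew-symmetry from Proposition~\ref{k1} along the integral curves of $X$ itself rather than along geodesics: setting $U = V = X$ in $g(\nabla_U X, V) + g(U, \nabla_V X) = 0$ yields $g(\nabla_X X, X) = 0$, so $\|X\|$ is \emph{constant} along each integral curve of $X$. Any maximal integral curve $\sigma$ defined on $[0,T)$ with $T < \infty$ then has finite length $T\,\|X(\sigma(0))\|$, and completeness of $M$ forces $\sigma(t)$ to converge as $t \to T$, contradicting maximality. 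This makes the flow of $X$ global, and the identification of $iso(M,g)$ with the Lie algebra of $\mathrm{Iso}(M,g)$ then follows from Myers--Steenrod as you indicate.
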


\begin{Pro}\label{delta}
If $X$  is a Killing field on $(M,g)$ and consider the function  $f  =  \frac{1}{2} g(X,X)  =  \frac{1}{2} ||X||^2$,
  then 
 \begin{itemize}
 \item[(1)]  $\nabla f  =  -  \nabla_{X} X$.
 \item[(2)] Hess $f (V, V ) = g (\nabla_{V} X,  \nabla_{V} X)  -  R (V, X, X, V )$.
 \item[(3)] $\Delta f =  ||\nabla  X||^2  -  Ric(X,X)$.
 \end{itemize}
 \end{Pro}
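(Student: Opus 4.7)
All three statements are standard identities for a Killing field, and I would derive them by exploiting the skew-symmetry of the $(1,1)$-tensor $V \mapsto \nabla_V X$ guaranteed by Proposition \ref{k1}. For (1), the computation
\[
V(f) \;=\; \tfrac12 V\,g(X,X) \;=\; g(\nabla_V X, X) \;=\; -g(V,\, \nabla_X X),
\]
where the last equality uses the skew-symmetry, holds for every $V \in T_pM$, so $\nabla f = -\nabla_X X$. This is a one-line argument once Proposition \ref{k1} is in hand.

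For (2), I would start from the definition
\[
\mathrm{Hess}\,f(V,V) \;=\; V(Vf) - (\nabla_V V)f \;=\; g\bigl(\nabla^2_{V,V} X,\, X\bigr) + |\nabla_V X|^2,
\]
where $\nabla^2_{V,V} X := \nabla_V \nabla_V X - \nabla_{\nabla_V V} X$. The crux is the Kostant-type identity $g(\nabla^2_{V,W} X, U) = -R(X,V,W,U)$ valid for Killing $X$. I would derive it by writing the skew-symmetry relation $g(\nabla_A X, B) + g(\nabla_B X, A) = 0$ in three cyclic forms in $(V,W,U)$, combining it with the curvature commutator $\nabla_A \nabla_B X - \nabla_B \nabla_A X - \nabla_{[A,B]} X = R(A,B)X$, and invoking the first Bianchi identity to resolve the resulting linear system. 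Specializing this identity to $W=V$, $U=X$ and using the pair symmetry $R(X,V,V,X) = R(V,X,X,V)$ yields $g(\nabla^2_{V,V}X, X) = -R(V,X,X,V)$, which delivers (2).

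For (3), I would take the trace of (2) over an orthonormal basis $\{e_i\}$ of $T_pM$:
\[
\Delta f \;=\; \sum_i \mathrm{Hess}\,f(e_i, e_i) \;=\; \sum_i |\nabla_{e_i} X|^2 \;-\; \sum_i R(e_i, X, X, e_i) \;=\; \|\nabla X\|^2 - \Ric(X,X),
\]
using the definition of Ricci curvature in the last equality. The main obstacle is the derivation of the Kostant identity in step (2) — once that algebraic fact is secured, (1) and (3) are immediate short computations, and indeed the proofs are classical and can be found in \cite{P.98} as the paper notes.
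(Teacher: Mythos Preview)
Your proof is correct and follows the standard route. Note, however, that the paper does not actually supply a proof of this proposition: it prefaces the block of results on Killing fields with the remark that ``Proofs of the results can be found in \cite{P.98}'' and states Proposition~\ref{delta} without argument. Your derivation is precisely the classical one found in that reference, and in fact the Kostant-type identity $\nabla^2_{V,W}X = -R(X,V)W$ that you isolate as the crux of (2) is recorded separately in the paper as Lemma~\ref{hess-k}, immediately following the proposition. So there is nothing to compare against beyond the citation, and your write-up is exactly what one would expect a reader to reconstruct from \cite{P.98}.
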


\begin{Lem}\label{hess-k}
If  $K$  is  a  Killing  field  on  a  Riemannian manifold $(M,g)$,  then 
\begin{eqnarray}\label{k1}
{\nabla^2_{X,Y}} K  =  - R(K,X)Y
\end{eqnarray}
\end{Lem}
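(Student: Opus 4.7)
The plan is the standard cyclic computation that exploits the skew-symmetry of $\nabla K$ together with the first Bianchi identity. Throughout, it is convenient to set
\[
A(X,Y,Z) := g(\nabla^2_{X,Y}K,\, Z),
\qquad
\nabla^2_{X,Y}K := \nabla_X\nabla_Y K - \nabla_{\nabla_X Y}K,
\]
and to write $R(X,Y,Z,W) := g(R(X,Y)Z, W)$.

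First I would record the Killing identity as
\[
g(\nabla_X K, Y) + g(\nabla_Y K, X) = 0,
\]
which is just Proposition~\ref{k1}. Differentiating this equation covariantly in a third direction $Z$, applying the Leibniz rule, and rearranging into second covariant derivatives, one obtains
\[
A(Z,X,Y) + A(Z,Y,X) = 0,
\]
so $A$ is skew-symmetric in its last two slots. (The cleanest way to see this is to extend $X, Y$ so that their covariant derivatives along $Z$ vanish at the chosen point, which removes the Christoffel terms; the tensorial statement then follows.)

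Next I would combine this skew-symmetry with the curvature commutator
\[
A(X,Y,Z) - A(Y,X,Z) = R(X,Y,K,Z),
\]
which is just the definition of the Riemann tensor applied to $K$. Permuting $X,Y,Z$ cyclically and iteratively substituting these two relations produces the identity
\[
2A(X,Y,Z) = R(X,Y,K,Z) + R(K,X,Z,Y) - R(K,Y,X,Z),
\]
after invoking the pair-swap symmetry $R(Z,Y,K,X) = R(K,X,Z,Y)$ and its analogue. A single application of the first Bianchi identity to the first term, together with the antisymmetry of $R$ in the last pair, collapses the right-hand side to $-2R(K,X,Y,Z)$. Hence $A(X,Y,Z) = -g(R(K,X)Y, Z)$ for every $Z$, which gives
\[
\nabla^2_{X,Y}K = -R(K,X)Y,
\]
as required.

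The only real obstacle is bookkeeping: one must be careful not to mix up the two natural conventions for the curvature tensor and to apply the pair-swap symmetry correctly when rewriting $R(Z,Y,K,X)$ and $R(X,Z,K,Y)$ in a form where the first Bianchi identity actually produces cancellation. Once those symmetries are written out unambiguously, the collapse is immediate.
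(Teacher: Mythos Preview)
Your argument is correct: this is the standard cyclic-permutation proof of the Kostant formula, and the bookkeeping you outline (skew-symmetry of $A$ in the last two slots from the differentiated Killing equation, the Ricci commutator identity, and a single use of the first Bianchi identity after pair-swapping) does collapse exactly to $-2R(K,X,Y,Z)$. The paper itself does not give a proof of this lemma; it is listed among the preliminary facts on Killing fields with the remark that proofs can be found in Petersen \cite{P.98}, and your sketch is precisely the argument one finds there.
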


We   refer  \cite{BN.08}  for more  geometric  exposition
on  Killing vector  fields. The  following  Proposition  \ref{c} can 
be  found  in   \cite{BN.08}. 
\begin{Pro}\label{c}
A  Killing  vector  field  $X$  on a Riemannian manifold $(M,g)$  has constant  length
if  and  only  if  every  integral  curve  of  the  field  $X$  is geodesic.  
\end{Pro}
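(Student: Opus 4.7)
The plan is to reduce the equivalence to a single identity already recorded in the paper: by Proposition \ref{delta}(1), $\nabla f = -\nabla_X X$ where $f = \frac{1}{2} g(X,X)$. Since $|X|$ is constant if and only if $f$ is constant if and only if $\nabla f \equiv 0$, this reduces both implications of the proposition to showing that $\nabla_X X \equiv 0$ is equivalent to every integral curve of $X$ being a geodesic.

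For the forward direction, assume $|X|$ is constant. Then $\nabla f \equiv 0$, and hence $\nabla_X X \equiv 0$. If $\gamma$ is any integral curve of $X$, then $\dot\gamma = X \circ \gamma$, so $\nabla_{\dot\gamma}\dot\gamma = (\nabla_X X)\circ \gamma = 0$, i.e.\ $\gamma$ is a geodesic. For the converse, suppose every integral curve of $X$ is a geodesic. At any $p$ with $X(p)\neq 0$, taking the integral curve $\gamma$ through $p$ gives $\nabla_X X(p) = \nabla_{\dot\gamma}\dot\gamma(p) = 0$; at the zero set of $X$ the identity $\nabla_X X = 0$ holds trivially. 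Hence $\nabla_X X \equiv 0$ on $M$, so $\nabla f \equiv 0$, and on the connected manifold $M$ this forces $f$, and therefore $|X|$, to be constant.

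The only mild subtlety is the bookkeeping at the zero set of $X$ in the converse; this is not a genuine obstacle since both sides of the identity vanish there. If preferred, one can bypass Proposition \ref{delta}(1) and work directly from the Killing condition $g(\nabla_V X, W) + g(V, \nabla_W X) = 0$ of Proposition \ref{k1}. Setting $W = X$ and using $g(X, \nabla_V X) = \frac{1}{2} V(|X|^2)$ yields the pointwise identity $g(\nabla_X X, V) = -\frac{1}{2} V(|X|^2)$ for every tangent vector $V$, from which both implications are immediate: $|X|$ constant makes the right side vanish and forces $\nabla_X X = 0$, while $\nabla_X X = 0$ makes the left side vanish and forces $V(|X|^2) = 0$ for all $V$, so $|X|$ is constant. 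I expect the proof to be essentially a two-line pairing of the Killing identity with the derivative of $|X|^2$, with no genuine obstacle.
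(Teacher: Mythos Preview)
Your proof is correct. The paper does not actually supply its own proof of this proposition; it simply cites \cite{BN.08} for the result, so there is nothing to compare your argument against. Your reduction via Proposition~\ref{delta}(1) (or equivalently the direct use of the Killing skew-symmetry from Proposition~\ref{k1}) is the standard and efficient route, and handles both directions cleanly.
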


\begin{Pro}\label{J}
If  $X$  is  a Killing  vector  field  on a Riemannian manifold $(M,g)$,
then  $X$  is  a  Jacobi  field  along  every  geodesic  $\gamma(t)$,
for  all  $t \in  \R$.
\end{Pro}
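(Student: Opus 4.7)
The plan is to give a short proof that uses Lemma \ref{hess-k} essentially as a black box, after specializing to the tangent field of a geodesic. An alternative geometric route using the flow of $X$ is also available, and I will indicate it as a parallel approach.

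Fix a geodesic $\gamma:\R\to M$ and set $V(t)=\gamma'(t)$. I want to show that the restriction $K\circ\gamma$ satisfies the Jacobi equation $\nabla_V\nabla_V K + R(K,V)V=0$. The key observation is that along $\gamma$ one has $\nabla_V V=0$, so the second covariant derivative along $\gamma$ coincides with the Hessian $\nabla^2_{V,V}K$, since by definition
\begin{equation*}
\nabla^2_{V,V}K \;=\; \nabla_V\nabla_V K \;-\; \nabla_{\nabla_V V} K \;=\; \nabla_V\nabla_V K.
\end{equation*}
Now I apply Lemma \ref{hess-k} with $X=Y=V$ at each point of $\gamma$; this yields $\nabla^2_{V,V}K = -R(K,V)V$. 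Combining the two displays gives $\nabla_V\nabla_V K = -R(K,V)V$, which is exactly the Jacobi equation along $\gamma$. Hence $K\circ\gamma$ is a Jacobi field, and this holds for all $t\in\R$ since $\gamma$ is defined on all of $\R$.

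As a sanity check, one can also see the statement geometrically: since $K$ is Killing, its local flow $\phi_s$ acts by isometries wherever it is defined, so $\Gamma(s,t):=\phi_s(\gamma(t))$ is, for each fixed $s$, a geodesic (isometries carry geodesics to geodesics). Thus $\Gamma$ is a variation of $\gamma$ through geodesics, and its variation field $\partial\Gamma/\partial s|_{s=0}=K\circ\gamma$ is a Jacobi field by the standard fact that variation fields of geodesic variations solve the Jacobi equation. No completeness of the flow is needed because the Jacobi equation is a local ODE along $\gamma$, so working on any open interval on which $\phi_s$ is defined suffices to identify $K\circ\gamma$ with a Jacobi field, and this identification then extends to all of $\R$ by uniqueness of solutions.

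There is essentially no obstacle here: once Lemma \ref{hess-k} is granted, the only subtle point is the vanishing $\nabla_V V=0$ that lets one pass from the Hessian expression $\nabla^2_{V,V}K$ to the iterated covariant derivative $\nabla_V\nabla_V K$ appearing in the Jacobi equation. The argument is purely pointwise along $\gamma$, so no global hypothesis (such as completeness of $M$ or of the flow of $K$) enters.
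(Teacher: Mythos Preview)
Your proof is correct. The paper itself does not supply a proof of this proposition: it lists it among several standard facts about Killing fields and refers the reader to Petersen \cite{P.98} for proofs. Your argument via Lemma~\ref{hess-k} is the standard computational route (and is essentially how Petersen proves it), and your alternative geometric argument via the isometric flow is also a well-known approach; both are valid and nothing further is needed.
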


\subsection{Killing  Vector Fields  on AHM}   

\vspace{0.5cm}

 In  this  subsection,  we  prove the main
result of  this paper that, in  an  AHM every Killing  vector field  on  $M$  is  parallel.

\begin{Thm}\label{const}
 If $X$  is  a  Killing  vector  field on  an AHM $M$, then  there exists  a
constant  $C >0$  such  that  $||X|| \leq  C,$  and consequently, $X$ is  parallel on $M$.
\end{Thm}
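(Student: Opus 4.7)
The plan is to set $f=\tfrac{1}{2}\|X\|^{2}$ and conclude via Corollary~\ref{Lio2}. By Proposition~\ref{delta}(3), $\Delta f = \|\nabla X\|^{2}-\Ric(X,X)$, and since $\Ric_{M}\le 0$ on an AHM (Proposition~\ref{non-pos}), we obtain $\Delta f\ge \|\nabla X\|^{2}\ge 0$, so $f$ is subharmonic on $M$. Once $f$ is shown to be bounded above, Corollary~\ref{Lio2} forces $f\equiv\const$, hence $\Delta f\equiv 0$; the identity $\|\nabla X\|^{2}=\Ric(X,X)\le 0$ then makes both sides vanish, so $\nabla X\equiv 0$. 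Therefore $X$ is parallel on $M$, and $\|X\|$ is automatically constant and bounded by some $C>0$.

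The substantial step is the bound $\|X\|\le C$. Along any geodesic $\gamma$, $X|_\gamma$ is a Jacobi field by Proposition~\ref{J}. Writing $X|_{\gamma} = \langle X,\gamma'\rangle\gamma' + X^{\perp}$, skew-symmetry of $\nabla X$ (Proposition~\ref{k1}) combined with $\nabla_{\gamma'}\gamma'=0$ yields
$\tfrac{d}{dt}\langle X(\gamma(t)),\gamma'(t)\rangle = \langle \nabla_{\gamma'}X,\gamma'\rangle = 0$,
so the tangential component is constant along $\gamma$, and the problem reduces to controlling $X^{\perp}\in\gamma'^{\perp}$. This transverse Jacobi field is governed by the Riccati structure \eqref{eq:ricatti}: since $M$ has no conjugate points, both stable and unstable solutions $u^{\pm}$ exist globally along $\gamma$ and satisfy $\tr(u^{\pm})=\pm h=0$. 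Combining these trace conditions with the subexponential volume growth of $M$ and Theorem~\ref{norm} (which for $h=0$ gives $\|J_{1}\wedge\cdots\wedge J_{n-1}\|\equiv 1$ for Jacobi fields arising from asymptotic variations), one can rule out exponential growth of $X^{\perp}$ in either direction along $\gamma$, yielding the required uniform bound on $\|X\|$.

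The main obstacle is precisely this boundedness argument: the assumption $\Ric_{M}\le 0$ by itself would still permit exponentially growing Jacobi fields, so the proof must crucially exploit both the vanishing traces $\tr(u^{\pm})=0$ (the AH condition with $h=0$) and the no-conjugate-point property to control the Killing Jacobi field $X|_{\gamma}$ simultaneously at $+\infty$ and $-\infty$. Once $\|X\|\le C$ is in hand, Corollary~\ref{Lio2} applied to the bounded subharmonic $f$ together with the sign analysis of $\Delta f=\|\nabla X\|^{2}-\Ric(X,X)$ finishes the proof in one line.
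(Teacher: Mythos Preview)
Your setup is correct and matches the paper: $f=\tfrac12\|X\|^{2}$ is subharmonic on an AHM because $\Delta f=\|\nabla X\|^{2}-\Ric(X,X)\ge 0$, and once $f$ is known to be bounded, the Strong Liouville Type Property forces $f\equiv\const$, whence $\nabla X\equiv 0$. The paper's Case~(i) is exactly this.

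The gap is in your ``substantial step.'' You assert that the trace conditions $\tr(u^{\pm})=0$, subexponential volume growth, and Theorem~\ref{norm} allow one to ``rule out exponential growth of $X^{\perp}$,'' but no argument is given, and the cited tools do not do this. Theorem~\ref{norm} controls only the \emph{wedge product} $\|J_{1}\wedge\cdots\wedge J_{n-1}\|$ of the stable Jacobi fields, not any individual Jacobi field, and certainly not the Killing Jacobi field $X|_{\gamma}$, which need not lie in that stable family. Individual $J_{i}$ may well grow while others shrink, keeping the product equal to~$1$. Moreover, even if you could exclude exponential growth of $X^{\perp}$ along each geodesic, subexponential growth is still unbounded, and you would additionally need uniformity over all geodesics through all points to obtain a global bound $\|X\|\le C$. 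You acknowledge this obstacle yourself but do not resolve it.

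The paper avoids this difficulty entirely by a different device: rather than bounding $f$ directly, it assumes $\|X\|(x)\to\infty$ and considers $g=e^{-f}$, which is automatically bounded in $[0,1]$. Writing $\Delta g=e^{-f}\bigl(-\Delta f+\|\nabla f\|^{2}\bigr)$, the paper runs a case analysis on the sign of $-\Delta f+\|\nabla f\|^{2}$ and, in each case, invokes the Liouville-type results (Theorem~\ref{SLTP}, Corollary~\ref{Lio1}) together with Proposition~\ref{k2} (a Killing field is determined by its $1$-jet at a point) to reach a contradiction. The key idea you are missing is this passage to $e^{-f}$, which converts the unbounded subharmonic $f$ into a bounded function whose Laplacian can be analysed.
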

\begin{proof}
Let $X$  be  a non-trivial Killing  vector  field  on an AHM  $M$. 
 From  Proposition  \ref{non-pos},  in  an  AHM  $M$,  $Ricci_{M} \leq  0$. By  Proposition  \ref{delta},  it  follows  that
$f  =   \frac{1}{2} g(X,X)  =  \frac{1}{2} ||X||^2$   is  a  non-negative, non-zero subharmonic  function  on $M$.   \\
Case $(i)$  Suppose  that  $||X||  <  C$, for  some $C > 0$, then $f$ 
is   a  bounded   non-negative subharmonic  function  on $M$.   By  Theorem
\ref{SLTP},  $f$  is  a  constant function  on  $M$.  Hence,
 $$ ||\nabla X||^2  =  Ricci(X,X)  \leq 0.$$   Consequently,  $||\nabla X||^2 = 0$ and thus $X$ is a parallel  vector  field.  \\

\noindent
Case $(ii)$  
Fix  $p  \in  M$.  Suppose  that  there  exists $x \in  M$ such  that \\ $||X||(x)  \rightarrow  \infty$  as  $d(p,x)  \rightarrow  \infty$  on  $M$.  Consider 
$g(x)  =  e^{-f(x)}$. Then $g$  is a non-negative  bounded  function on  $M$   with  $0  \leq  g \leq 1$.  We  have,
\begin{eqnarray} \label{g}
\Delta  g (x) =   e^{-f(x)}  ( -\Delta  f  +   || \nabla f||^2).
\end{eqnarray}
\begin{eqnarray} 
\mbox{If}\;\;\;   h(x)  =   -\Delta  f  +   || \nabla f||^2, \; \; \mbox{then}   \;\;  \Delta  g (x) =   e^{-f(x)}  h(x).
\end{eqnarray}

Note  that   $g$  is  a smooth  non-negative  function,  
 which  converges to zero outside  a compact  set  $K$ containing 
$p$.  \\\\
 (a)  Suppose   that  $h  \equiv  0$,  then  $g$   is a  bounded  harmonic  function 
on  AHM.  Then  $g$  is  constant  function on $M$  by  
Corollary  \ref{Lio1},  Liouville  Property.  If  $g(q)  =  0$  for some  $q  \in  K$,  then  
$g  \equiv  0$.  This  implies  that  $f$  converges  to  $\infty$  
everywhere. This  is  a contradiction  to  smoothness  of  $f$.\\
If  $g(q)  =  c >0,$  for some  $q  \in  K$,  then $g \equiv  c,$ 
which  again contradicts  to smoothness  of  $g$,  as  $g$
 converges to zero outside  $K$.  Therefore,  this  case  can't  occur.  \\\\
(b)   Suppose  that  there  exists  a  point  $q  \in  K$   such  that $h(q) > 0$.   Then  by  continuity  we  can  find
a  neighbourhood  of  $q$ say  $N \subset K$  in  which  $h >0$.  Then  $g$  is  bounded  subharmonic  function  on 
$N$.  Again  by  Theorem \ref{SLTP},  $g$  is  constant  function  on  $N$.   Note  that   by  argument  of  case (a),
$g = c > 0$  on  $N$.    Therefore,  $f$  is  a  positive constant  on  $N$  and  consequently,  $X$  is a non-trivial parallel  vector  field  
on  $N$. Therefore,  we  can assume $||X||  = 1$   on  $N$.  But  from Proposition  \ref{k2},  a Killing  vector  field  on $M$
is  uniquely determined  by  $X(q) = v,  \nabla X(q) = 0$.  Thus,  $X$  is  parallel  vector  field  on all of $M$.   But,  this  is  
a  contradiction  to   $||X||(x)  \rightarrow  \infty$.  Thus,   this  case  also can't  occur.    \\\\
We  conclude that, $X$  must  be  a bounded  vector  field  and by  case (i)  $X$  is parallel.
\end{proof}

Now we show  that  there exists a non-trivial  parallel  vector field on an AHM.
We need the  following important Lemma.

\begin{Lem}\label{dis}
If $(M,g)$ is  an AHM,  then Iso$(M,g)$ is  diffeomorphic to Iso$({\R}^n,can)$.
Consequently, Iso$(M,g)$ is diffeomorphic to semidirect product of $O(n)$ and 
${\R}^n$. Hence, in particular, Iso$(M,g)$ can't be a discrete group.  
\end{Lem}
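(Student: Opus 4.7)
My plan is to show first that $\operatorname{Iso}(M,g)$ has the structure of a Lie group (via Myers--Steenrod), then to exhibit enough one-parameter isometry subgroups of $M$ to match the dimension $n(n+1)/2$ of $\operatorname{Iso}(\R^n,\mathrm{can})$, with the upper bound from Proposition~\ref{dim} forcing equality. The key source of isometries should be the horospherical structure: an $n$-dimensional family of ``translations'' coming from the gradient flows of Busemann functions, together with an $O(n)$ isotropy group. Once the dimensions match and the group is connected in the right sense, the semidirect product structure $O(n)\ltimes\R^n$ should follow from the general theory of Euclidean motion groups, yielding non-discreteness as an immediate corollary.

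The main construction is as follows. For $v\in S_pM$, let $\phi^v_t$ denote the flow of $\nabla b_v$. By Corollary~\ref{b1}, the integral curves of $\phi^v_t$ are exactly the asymptotic geodesics to $\gamma_v$, so the flow moves each point along its asymptotic geodesic. From the earlier area-preservation corollary (specialized to $h=0$) we already know $\phi^v_t$ is volume-preserving. I would upgrade this to an isometric flow by examining the tensor $T_t = (\phi^v_t)^*g - g$: Corollary~\ref{bi-asy} on bounded strips guarantees that $T_t$ is bounded, while the harmonicity $\Delta b_v\equiv 0$ combined with the Riccati equation \eqref{eq:ricatti} should yield a subharmonic scalar invariant built from $T_t$ (for example $\|T_t\|^2$). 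Applying the Strong Liouville Type Property (Corollary~\ref{Lio2}) then forces this invariant to be constant, and evaluation along $\gamma_v$ (where $T_t$ vanishes identically because the flow acts as a time-shift on $\gamma_v$) makes the constant zero. Hence $\phi^v_t\in\operatorname{Iso}(M,g)$ for every $t$. Varying $v$ over an orthonormal basis of $T_pM$ gives $n$ one-parameter isometry subgroups; pairwise commutativity (modulo showing that distinct horospherical flows commute, which should follow from the bi-asymptotic property) produces the $\R^n$-factor.

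For the $O(n)$ isotropy, I would argue that the isotropy representation at $p$ of $\operatorname{Iso}(M,g)$ must act transitively on $S_pM$: given any two Busemann functions $b_v$ and $b_w$ with $v,w\in S_pM$, they have the same Laplacian and equivalent asymptotic geometry, so one can build the desired isometry by matching them via the construction above and extending. Combined with the $\R^n$ of translations, this gives $\dim\operatorname{Iso}(M,g)=n(n+1)/2$, matching the upper bound and yielding the diffeomorphism to $O(n)\ltimes\R^n$. The main obstacle I anticipate is the upgrade from volume-preserving to isometric for the flow $\phi^v_t$: the boundedness of $T_t$ needed to invoke the Strong Liouville Type Property has to be extracted carefully from the bounded-strip corollary, and the subharmonicity of the scalar invariant of $T_t$ requires genuinely using the Riccati equation together with $h=0$. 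The transitivity of the isotropy representation is a secondary obstacle, but once one has the full $\R^n$-action by translations the quotient structure makes this essentially a matter of identifying the stabilizer with a closed subgroup of $O(n)$ of maximal dimension.
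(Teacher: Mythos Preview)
Your approach is entirely different from the paper's. The paper gives a three-line argument: since an AHM is diffeomorphic to $\R^n$ via some $\phi:M\to\R^n$, it asserts that $\psi\mapsto \phi\circ\psi\circ\phi^{-1}$ carries $\operatorname{Iso}(M,g)$ to $\operatorname{Iso}(\R^n,\mathrm{can})$ and is a diffeomorphism of groups. (That argument is itself questionable---conjugating by an arbitrary diffeomorphism yields a map preserving $\phi_*g$, not the canonical metric---but the only downstream use of the lemma is the corollary that $\operatorname{Iso}(M,g)$ is not discrete, for which far less is needed.)

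Your proposal, by contrast, tries to construct the isometry group explicitly by showing that the gradient flows $\phi^v_t$ of Busemann functions are isometric. This has a genuine gap and is, in the logical structure of the paper, circular. The flow $\phi^v_t$ is isometric exactly when $\nabla b_v$ is Killing; since the Hessian $\nabla^2 b_v$ is symmetric, this forces $\nabla^2 b_v\equiv 0$, i.e.\ $\nabla b_v$ is parallel. That is precisely the main theorem of the paper, established \emph{after} this lemma through Theorem~\ref{const}, Corollary~\ref{non-trivial}, and Theorem~\ref{bus-par}. Your proposed mechanism---that $\|T_t\|^2$ with $T_t=(\phi^v_t)^*g-g$ is bounded and subharmonic, hence constant by Corollary~\ref{Lio2}---does not go through: Corollary~\ref{bi-asy} bounds distances between bi-asymptotic geodesics, not the pointwise norm of $T_t$, and no argument is given for subharmonicity of $\|T_t\|^2$ (the Riccati equation and $h=0$ control $\tr(\nabla^2 b_v)$, not $\|\nabla^2 b_v\|^2$). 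The $O(n)$-isotropy step has the same circularity: transitivity of the isotropy action on $S_pM$ would already make $M$ two-point homogeneous, which is stronger than what is being proved.
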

\begin{proof}
As $(M,g)$ is  an AHM, then by geometry of $M$, it is diffeomorphic to
${\R}^n$. Let  $\phi : M \rightarrow {\R}^n$ be diffeomorphism. 
Define
\begin{eqnarray*}
Iso(M,g) \rightarrow  Iso({\R}^n,can)\\
\psi \rightarrow \phi \circ \psi \circ {\phi}^{-1}
\end{eqnarray*}
Note that as $\psi$ is an isometry of $M$ and $\phi$ is a diffeomorphism,
$\phi \circ \psi \circ {\phi}^{-1} :  {\R}^n \rightarrow {\R}^n$ 
is  a distance preserving surjective map, hence is  an isometry, by Myer-Steenrod 
theorem.  Clearly, the above map defines diffeomorphism between the two isometry 
groups.    
\end{proof}

\begin{Cor}\label{non-trivial}
If  $X$  is  a Killing  vector  field on  an AHM,  then $X$  is  a
non-trivial  parallel  vector field on  $M$.
\end{Cor}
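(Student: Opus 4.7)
The plan is to obtain this corollary as a direct combination of the two preceding results of the subsection, Theorem \ref{const} and Lemma \ref{dis}, with essentially no new computation.

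For the \emph{parallel} half of the conclusion, I would simply invoke Theorem \ref{const} verbatim: given any Killing field $X$ on the AHM $M$, that theorem considers the non-negative subharmonic function $f=\tfrac12\|X\|^2$ and handles the two alternatives (either $\|X\|$ is bounded, in which case Strong Liouville Type Property forces $f$ constant and hence $\|\nabla X\|^2=-\operatorname{Ric}(X,X)\le 0$, or $\|X\|$ blows up, in which case the auxiliary function $g=e^{-f}$ leads to a contradiction via Corollary \ref{Lio1} and Proposition \ref{k2}). In both cases the conclusion is $\nabla X\equiv 0$, so $X$ is parallel on $M$.

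For the \emph{non-trivial} half, I would appeal to Lemma \ref{dis}: since $M$ is diffeomorphic to $\R^n$, the lemma identifies $\operatorname{Iso}(M,g)$ with $\operatorname{Iso}(\R^n,\mathrm{can})\cong O(n)\ltimes\R^n$, which is a Lie group of positive dimension $\tfrac{n(n+1)}{2}$, and in particular is not discrete. By Proposition \ref{dim}, the Lie algebra of $\operatorname{Iso}(M,g)$ is precisely $\mathrm{iso}(M,g)$, the space of Killing fields; since its Lie group is not discrete, $\mathrm{iso}(M,g)\ne\{0\}$, so $M$ admits a non-trivial Killing field. Combined with the previous paragraph, any such non-trivial Killing field is parallel, giving the desired non-trivial parallel vector field, and proving that the Killing field $X$ under consideration (which is non-trivial by convention, since the zero field carries no geometric information) is in fact non-trivial and parallel.

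The main (and really only) subtlety is a purely interpretive one: the word ``non-trivial'' in the conclusion must be read as recording both that the hypothesis $X\not\equiv 0$ is preserved and that the class of such $X$ is non-empty, the latter being exactly the content of Lemma \ref{dis}. There is no analytic obstacle to overcome — Theorem \ref{const} has already absorbed all the work (Strong Liouville Type Property, Ricci non-positivity, and the maximum-principle argument for $g=e^{-f}$), and Lemma \ref{dis} supplies the existence. The corollary is therefore a one-line concatenation of these two ingredients.
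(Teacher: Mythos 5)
Your proposal is correct and takes essentially the same route as the paper: parallelism of any Killing field comes from Theorem \ref{const}, and non-triviality (i.e.\ existence of a nonzero Killing field) comes from Lemma \ref{dis} via the observation that a trivial space of Killing fields would force $\operatorname{Iso}(M,g)$ to be discrete. The paper merely phrases this second step as a proof by contradiction rather than in the contrapositive form you use; the content is identical.
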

\begin{proof}
If  $X$  is  a Killing  vector  field on  an  AHM,  then $X$  is  a
parallel  vector field on  $M$  by  Theorem \ref{const}.
Thus, either a Killing  vector  field  on an AHM is 
identically zero or is non-trivial and parallel.  
Suppose that there doesn't exist any non-trival Killing vector field on $M$.
Then, as  Iso$(M,g)$ is a Lie group, its connected component of identity 
is trivial. Hence, Iso$(M,g)$ must be a discrete group.    
This  is  a contradiction, in view of Lemma \ref{dis}, proves  the
 required result.
 \end{proof}

\begin{Cor}\label{dim}
In  an AHM  $M$, we  have $1\leq \dim iso(M,g) \leq \dim M = n$.      
\end{Cor}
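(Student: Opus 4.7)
The plan is to establish the two inequalities separately, using only results already available in this section.

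For the lower bound $\dim iso(M,g)\ge 1$, I would invoke Corollary \ref{non-trivial}: if $iso(M,g)$ were the zero Lie algebra, then the connected component of the identity in $\mathrm{Iso}(M,g)$ would be trivial and $\mathrm{Iso}(M,g)$ would be discrete, which directly contradicts Lemma \ref{dis}. Hence at least one non-trivial Killing field exists, giving $\dim iso(M,g)\ge 1$.

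For the upper bound $\dim iso(M,g)\le n$, the key observation is that every Killing field on an AHM is parallel by Theorem \ref{const}. The plan is to consider, for a fixed basepoint $p\in M$, the evaluation map
\begin{equation*}
\mathrm{ev}_{p}:iso(M,g)\longrightarrow T_{p}M,\qquad X\longmapsto X(p).
\end{equation*}
This map is linear. I would then argue injectivity: if $X$ is parallel and $X(p)=0$, then for any $q\in M$ the value $X(q)$ is obtained from $X(p)$ by parallel transport along any curve joining $p$ to $q$, and parallel transport of the zero vector is zero, so $X\equiv 0$. Therefore $\mathrm{ev}_{p}$ is injective and
\begin{equation*}
\dim iso(M,g)\le \dim T_{p}M = n.
\end{equation*}

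There is no real obstacle here; the work has been done in the preceding results. The only mild subtlety worth double-checking is that the injectivity argument uses only parallelism (not completeness or simple connectedness beyond what we have), but since an AHM is by standing convention complete and simply connected, the parallel transport argument is path-independent and the conclusion is immediate. Combining the two inequalities yields $1\le\dim iso(M,g)\le n$, which is strictly sharper than the general Bianchi bound $n(n+1)/2$ recorded in Proposition \ref{dim}.
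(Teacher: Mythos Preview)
Your proposal is correct and follows essentially the same approach as the paper: the paper also uses that every Killing field on an AHM is parallel (so the evaluation map $X\mapsto X(p)$ into $T_pM$ is injective) to get the upper bound, and invokes the existence of a non-trivial Killing field for the lower bound. Your version is slightly more explicit about why parallelism forces injectivity of the evaluation map, but the argument is the same.
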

\begin{proof}   
Since  any  Killing field $X$ on $M$ is a  non-trivial parallel vector field,  the  linear  map, 
 $X  \rightarrow  X(p)$  is  injective from  $iso(M,g)$  to  $T_pM$. 
 And  therefore, $1\leq \dim iso(M,g) \leq  \dim M = n$.
\end{proof}

\begin{Cor}\label{Jac}
In  an  AHM $M$, there exists a non-trivial 
Jacobi  field on  $M$  which  is  parallel, and which arises out of variation
of  bi-asymptotic  geodesics.
\end{Cor}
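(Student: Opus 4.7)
The plan is to exploit the non-trivial parallel Killing vector field whose existence has just been established and to read off the three required properties (non-trivial, parallel Jacobi field, bi-asymptotic variation) from general facts about Killing fields together with the bounded-strip property of AHM.

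First, invoke Corollary \ref{non-trivial} to pick a non-trivial Killing vector field $X$ on $M$; by that corollary, $X$ is automatically parallel, so $\|X\|\equiv c$ for some constant $c>0$. Since $X$ has constant length, Proposition \ref{c} implies that every integral curve of $X$ is a (unit-speed up to rescaling) geodesic. Next, by Proposition \ref{J}, $X$ is a Jacobi field along every geodesic of $M$; restricting $X$ to any fixed geodesic $\gamma$ therefore gives a Jacobi field $J:=X\circ\gamma$ which is parallel (because $\nabla X\equiv 0$) and non-trivial (because $X$ is non-trivial and $\|X\|$ is the positive constant $c$).

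The remaining point is to realize $J$ as coming from a variation through bi-asymptotic geodesics. Let $\phi_s$ denote the local (in fact global, by completeness of $M$) flow of $X$, which acts by isometries since $X$ is Killing. Define the variation $\alpha(s,t):=\phi_s(\gamma(t))$. The variation vector field is $X$ restricted to $\gamma$, so $J$ is precisely the Jacobi field arising from this variation. Moreover, each variation curve $t\mapsto\phi_s(\gamma(t))$ is a geodesic, because $\phi_s$ is an isometry and $\gamma$ is a geodesic. Finally, I would bound
\[
d\bigl(\gamma(t),\phi_s(\gamma(t))\bigr)\ \le\ \mathrm{length}\bigl(u\mapsto\phi_u(\gamma(t)),\ u\in[0,s]\bigr)\ =\ |s|\,\|X\|\ =\ c|s|,
\]
using that the curve $u\mapsto\phi_u(\gamma(t))$ is an integral curve of $X$ hence has speed $\|X\|=c$. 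Thus, for each fixed $s$, the two geodesics $t\mapsto\gamma(t)$ and $t\mapsto\phi_s(\gamma(t))$ remain at distance at most $c|s|$ as $t\to\pm\infty$; by Corollary \ref{bi-asy} this forces them to be bi-asymptotic. Consequently $J$ is a non-trivial parallel Jacobi field arising from a variation through bi-asymptotic geodesics, which is exactly Corollary \ref{Jac}.

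I do not expect any genuine obstacle here: the content lies entirely in the already-proved Corollary \ref{non-trivial} together with the standard facts about Killing fields (Propositions \ref{c} and \ref{J}) and the AHM bounded-strip property (Corollary \ref{bi-asy}). The only small point to be careful about is the uniform-in-$t$ nature of the bound $d(\gamma(t),\phi_s(\gamma(t)))\le c|s|$, which rests crucially on the constancy of $\|X\|$ and hence on $X$ being parallel.
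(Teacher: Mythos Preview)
Your argument is correct and in fact more self-contained than the paper's. Both proofs start identically: take the non-trivial parallel Killing field $X$ from Corollary \ref{non-trivial} and invoke Proposition \ref{J} to view $X$ as a parallel Jacobi field along geodesics. The divergence is in how the bi-asymptotic variation is obtained. The paper restricts to a geodesic $\gamma_w$ with $w\perp X(p)$, so that $X$ is a perpendicular parallel Jacobi field along $\gamma_w$, and then simply cites Eschenburg \cite{E.77} for the conclusion that such a Jacobi field arises from a variation through bi-asymptotic geodesics. You instead construct the variation explicitly via the isometric flow $\phi_s$ of $X$, bound $d(\gamma(t),\phi_s(\gamma(t)))\le c|s|$ using the constancy of $\|X\|$, and then apply the paper's own Corollary \ref{bi-asy}. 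Your route avoids the external reference and keeps everything internal to the paper (indeed it uses Corollary \ref{bi-asy}, which the paper set up precisely for this kind of purpose); the paper's route is shorter on the page but outsources the geometric content to \cite{E.77}. Either way the substance is the same: a bounded (here parallel) Jacobi field forces the variation geodesics to stay at bounded distance, hence to be bi-asymptotic in an AHM.
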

\begin{proof}   
Note  that  from  Corollary \ref{non-trivial}, a  Killing field  $X$ on AHM is non-trivial  parallel  and  
by  Proposition  \ref{J},  $X$  is  Jacobi  field  along  every  geodesic. 
We  may  assume  that  $||X|| =1$.   Let  $X(p)  =  v$.  Consider  geodesic 
 $\gamma_{w}$,  passing  through $p$  with  initial  velocity  vector  $w  \perp v$.  
Then  $X$  is  parallel perpendicular  Jacobi  field  along  $\gamma_{w}$.
From  \cite{E.77},  it  follows  that  $X$  arises out  of  variation of 
bi-asymptotic  geodesics,  bi-asymptotic  to  $\gamma_{w}$. 
\end{proof}

Thus,  we  have shown:

\begin{Cor}\label{cl}
In  an  AHM  $M$,  the  following properties of  a  Killing vector field are  equivalent:
\begin{itemize}
\item[1)]  The  field  $X$  has bounded  length.
\item[2)]  The  field  $X$  is non-trivial and parallel  on $(M,g)$.
\item[3)]  The  field  $X$   is non-trivial parallel  central Jacobi  field  on  $M$.
\item[4)]  $X$  has  constant  length.
\end{itemize}
\end{Cor}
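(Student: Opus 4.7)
The plan is to establish the four-way equivalence as a cycle $(1) \Rightarrow (2) \Rightarrow (3) \Rightarrow (4) \Rightarrow (1)$, with each link essentially assembling results already proved in the paper. Since all the hard analytical work (Strong Liouville, Theorem \ref{const}, Corollary \ref{Jac}) is already in hand, the argument here is essentially one of bookkeeping.

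For $(1) \Rightarrow (2)$, I would directly invoke Theorem \ref{const}: any Killing field of bounded length on an AHM is parallel. Combined with Corollary \ref{non-trivial}, which asserts that a Killing field on an AHM is either identically zero or non-trivial and parallel, this delivers $(2)$ (setting aside the trivial zero field, for which all four clauses become vacuous). For $(2) \Rightarrow (3)$, Proposition \ref{J} guarantees that the parallel Killing field $X$ is a Jacobi field along every geodesic. After normalizing so $\|X\|=1$, I would fix $p\in M$ and pick a geodesic $\gamma_w$ with $w \perp X(p)$; then $X$ restricted to $\gamma_w$ is a parallel perpendicular Jacobi field along a geodesic in a manifold without conjugate points, and the Eschenburg-type argument from \cite{E.77}, already invoked in Corollary \ref{Jac}, shows that such a Jacobi field is central, i.e., arises from a variation of $\gamma_w$ by bi-asymptotic geodesics.

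The implication $(3) \Rightarrow (4)$ is immediate, because a parallel vector field has constant pointwise length: along any curve $c$,
\[
\tfrac{d}{dt}\, g(X,X) \;=\; 2\, g(\nabla_{c'} X, X) \;=\; 0.
\]
Finally, $(4) \Rightarrow (1)$ is trivial since a field of constant length is bounded, which closes the cycle. The only mildly substantive step is $(2)\Rightarrow(3)$, which is essentially absorbed by Corollary \ref{Jac}; the remaining work consists simply in noting that each of the four conditions forces the others. In this sense Corollary \ref{cl} is a compact repackaging of Theorem \ref{const}, Corollary \ref{non-trivial} and Corollary \ref{Jac} into a single symmetric statement.
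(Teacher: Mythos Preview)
Your proof is correct and follows exactly the same cycle $(1)\Rightarrow(2)\Rightarrow(3)\Rightarrow(4)\Rightarrow(1)$ as the paper, invoking Theorem \ref{const} for the first step, Corollary \ref{Jac} for the second, and treating the last two as trivial. Your version is simply more explicit (and your parenthetical remark about the zero field is a fair observation about the statement itself).
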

\begin{proof}
 Clearly,  $1)$  $\implies$  $2)$   from   Theorem \ref{const}.
 $2)$   $\implies$   $3)$ follows  from  Corollary  \ref{Jac}  and
 trivially, $3)$    $\implies$  $4)$   $\implies$  $1)$.
 Thus,  all  the  assertions  $1)$, $2)$,   $3)$  and $4)$  are equivalent.
\end{proof}

\begin{Rem}
 Assertions  $1),  2), 4)$  of  Corollary \ref{cl} are  equivalent, for  a Killing vector field  in any manifold of  non-positive sectional  curvature
\cite{BN.08}.
In  case  of   AHM, we  obtain  the  same conclusion without any  assumption on sectional  curvatures. 
\end{Rem}

\subsection{Non  Existence of Killing  Vector Field  of  Constant Length on AH Manifolds with  \boldmath$h >0$\unboldmath.}   

In  this  subsection,  we  find  estimate  
 on  Ricci  curvature  of AH manifold  $M$ of  $h  >0$.  Then  using  this  estimate  we  show  that  there does not
 exist  any  Killing  vector field  on  $M$  of  constant length.  

\begin{Pro}\label{neg-r}
If $(M,g)$  is  an AH manifold  of constant $h > 0$.
Then   Ricci $(v,v)  \leq  \frac{-h^2}{(n-1)} <  0$,  for  any  $v \in  SM$,  where  Ricci  denotes 
the  Ricci  curvature of  $M$.  
\end{Pro}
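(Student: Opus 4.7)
The plan is to derive the estimate directly from the Riccati equation \eqref{eq:ricatti}, coupled with the Cauchy–Schwarz inequality applied to the trace of the square of the unstable Riccati solution. The argument mirrors the one already used in Proposition \ref{non-pos}, but refines it quantitatively in the case $h>0$.

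First, I would fix $v\in SM$ and work along the geodesic $\gamma_v$. Applying the trace to the Riccati identity $(u^+)'(t)+(u^+)^2(t)+R(t)=0$ on the $(n-1)$-dimensional space $\gamma_v'(t)^\perp$ gives
\begin{equation*}
\tfrac{d}{dt}\tr u^+(t) + \tr\bigl((u^+)^2(t)\bigr) + \tr R(t) = 0.
\end{equation*}
Because $M$ is AH, $\tr u^+(t)\equiv h$ is the universal constant, so its derivative vanishes. Moreover, $\tr R(t)=\Ric(\gamma_v'(t),\gamma_v'(t))$ by the definition of the Jacobi operator. At $t=0$ this already yields the key identity
\begin{equation*}
\tr\bigl((u^+)^2\bigr) = -\Ric(v,v).
\end{equation*}

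Next, I would exploit the well-known symmetry of the shape operator $u^+(v)$ of the (smooth) unstable horosphere. Being a symmetric endomorphism of the $(n-1)$-dimensional space $v^\perp$, its eigenvalues $\lambda_1,\dots,\lambda_{n-1}$ are real, with $\sum\lambda_i=h$ and $\sum\lambda_i^2=\tr((u^+)^2)$. The Cauchy–Schwarz inequality then gives
\begin{equation*}
\tr\bigl((u^+)^2\bigr) \;=\; \sum_{i=1}^{n-1}\lambda_i^2 \;\ge\; \frac{1}{n-1}\Bigl(\sum_{i=1}^{n-1}\lambda_i\Bigr)^{\!2} \;=\; \frac{h^2}{n-1}.
\end{equation*}
Combining the two displays yields $\Ric(v,v)\le -\tfrac{h^2}{n-1}$. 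Since $h>0$, this bound is strictly negative, which is the desired conclusion.

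No serious obstacle is anticipated: everything is algebraic once the Riccati equation and the symmetry of $u^+$ are in hand. The only point worth a line of justification is why $u^+$ is symmetric, which follows from its interpretation as the Hessian of the (smooth, by Proposition \ref{smooth}) Busemann function $b^+_{-v}$, restricted to $v^\perp$. The argument is uniform in $v\in SM$, so the bound holds pointwise on $SM$, giving the stated estimate on $\Ric_M$ everywhere.
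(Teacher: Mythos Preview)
Your proposal is correct and follows essentially the same route as the paper: trace the Riccati equation, use the constancy of $\tr u^+\equiv h$ to kill the derivative term, and then apply Cauchy--Schwarz to $\tr((u^+)^2)$ to obtain the bound $-\Ric(v,v)=\tr((u^+)^2)\ge h^2/(n-1)$. The only difference is that you spell out the justification for the symmetry of $u^+$ and the eigenvalue form of Cauchy--Schwarz, which the paper leaves implicit.
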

\begin{proof}
 Taking  the  trace of \eqref{eq:ricatti},
we obtain that on any asymptotically harmonic manifold $M$, 
\begin{equation}\label{eq:ricci<0}
- Ricci(\gamma_v'(t),\gamma_v'(t) )=\tr (u^{+})^2(t),  \;   \forall  v\in SM.
\end{equation}
By  Cauchy-Schwartz  inequality,\\
$$\tr (u^{+})^2(t)  \geq \frac{(\tr u^{+})^2}{(n-1)} =   \frac{h^2}{(n-1)}.$$
Thus,
$Ricci(v,v) \leq - \frac{h^2}{(n-1)} <  0$.
\end{proof}

We  also  recover  Proposition  \ref{non-pos}.

\begin{Cor}
If  $(M,g)$  is  an AHM, then   Ricci $(v,v)  \leq  0, \forall v \in  SM$. 
\end{Cor}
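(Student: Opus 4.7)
The plan is to recognize that this corollary is an immediate specialization of the preceding Proposition \ref{neg-r} to the case $h = 0$, which by definition of AHM is precisely the setting at hand. The entire machinery needed has already been developed: one only has to trace back where the strict inequality in Proposition \ref{neg-r} came from and observe that it degenerates to a non-strict inequality when $h$ vanishes.

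Concretely, I would take the trace of the Riccati equation \eqref{eq:ricatti} along a geodesic $\gamma_v$ with $v \in SM$. Since on an AH manifold $\tr u^{+}(t) \equiv h$ is constant along geodesic flow orbits, the derivative term $(\tr u^{+})'(t)$ vanishes, yielding the pointwise identity
\[
\mathrm{Ricci}(\gamma_v'(t),\gamma_v'(t)) \;=\; -\tr (u^{+})^{2}(t).
\]
This is \eqref{eq:ricci<0}, the very formula established inside the proof of Proposition \ref{neg-r}. The Cauchy–Schwarz inequality applied to the symmetric endomorphism $u^{+}(t)$ on the $(n-1)$-dimensional normal space gives $\tr(u^{+})^{2}(t) \geq (\tr u^{+})^{2}/(n-1) = h^{2}/(n-1)$, and substitution of $h=0$ for an AHM collapses this to the trivial bound $\tr(u^{+})^{2}(t) \geq 0$. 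Evaluating at $t=0$ gives $\mathrm{Ricci}(v,v) \leq 0$ for all $v \in SM$, as required.

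There is no obstacle to speak of: the corollary is a one-line consequence of Proposition \ref{neg-r}, and indeed it merely re-derives Proposition \ref{non-pos} (which was proved earlier in the paper by exactly the same trace-of-Riccati argument). Its inclusion at this location serves to make explicit that the unified estimate $\mathrm{Ricci}(v,v) \leq -h^{2}/(n-1)$ covers both the AHM case $h=0$ and the strictly negative case $h>0$ in a single framework, so I would simply present it as a short remark citing Proposition \ref{neg-r} with $h=0$.
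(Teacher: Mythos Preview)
Your proposal is correct and matches the paper's treatment exactly: the paper presents this corollary without proof, immediately after Proposition~\ref{neg-r}, prefaced by the remark ``We also recover Proposition~\ref{non-pos},'' indicating precisely the specialization to $h=0$ that you describe. Your observation that the Cauchy--Schwarz step degenerates to the trivial bound $\tr(u^{+})^{2}\geq 0$ when $h=0$ is the intended content.
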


\begin{Lem}\label{k}
If  $(M,g)$  is  any  Riemannian  manifold  of  negative  Ricci curvature,
then  $M$  has  no  non-trivial Killing  vector  fields  of  constant  length.
\end{Lem}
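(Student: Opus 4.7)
The plan is to use Proposition \ref{delta}(3), which expresses $\Delta f$ for $f=\tfrac{1}{2}\|X\|^2$ in terms of $\|\nabla X\|^2$ and $\mathrm{Ric}(X,X)$, and then exploit the sign incompatibility forced by the negative Ricci hypothesis.

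Suppose, for contradiction, that $X$ is a non-trivial Killing vector field on $(M,g)$ with $\|X\|\equiv c$ constant. Set $f=\tfrac{1}{2}\|X\|^2=\tfrac{c^2}{2}$. Since $f$ is constant, $\nabla f\equiv 0$ and $\Delta f\equiv 0$. Applying Proposition \ref{delta}(3), I would conclude
\[
0=\Delta f=\|\nabla X\|^2-\mathrm{Ric}(X,X),
\]
so that $\|\nabla X\|^2=\mathrm{Ric}(X,X)$ at every point of $M$.

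Now at any point $p$ where $X(p)\neq 0$, the assumption that $(M,g)$ has negative Ricci curvature gives $\mathrm{Ric}(X(p),X(p))<0$, whereas the left side $\|\nabla X\|^2(p)\geq 0$. This is a contradiction, so $X(p)=0$ at every such point; hence $X\equiv 0$, contradicting non-triviality. Therefore no non-trivial Killing field of constant length can exist.

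There is essentially no technical obstacle here: the argument is a one-line consequence of the Bochner-type identity recorded in Proposition \ref{delta}(3). The only thing to be careful about is the precise meaning of ``negative Ricci curvature''—one must read it as $\mathrm{Ric}(v,v)<0$ for every nonzero $v$ (strict negativity), which is exactly the sign needed to force the contradiction above; if the hypothesis were only non-positivity, one would instead conclude that $X$ is parallel and $\mathrm{Ric}(X,X)\equiv 0$ along $X$, which is the AHM situation already exploited in Theorem \ref{const}.
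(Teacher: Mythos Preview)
Your proof is correct and follows essentially the same route as the paper's: both apply Proposition \ref{delta}(3) to the constant function $f=\tfrac12\|X\|^2$ to obtain $\|\nabla X\|^2=\mathrm{Ric}(X,X)$ and then read off the sign contradiction. Your version is slightly more explicit about localizing the contradiction at a point where $X(p)\neq 0$, but the argument is the same.
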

\begin{proof} 
$(M,g)$  be a   Riemannian  manifold  with  $Ricci_{M}  < 0$.
If  $X$  is  a Killing  vector  field  of  constant  length, then  $f$  as in
Proposition  \ref{delta},  is  a  constant function  and hence  harmonic function.
Therefore,  from Proposition  \ref{delta} (3), 
$\Delta f = 0 =  ||\nabla  X||^2  -  Ric(X,X).$
Therefore,
$$ 0 \leq  ||\nabla  X||^2  = Ric(X,X)  < 0.$$
This  implies  that $X =0$.  Thus,  
 Ricci $(X,X)  =  0$ if and  only if  $X  =0$.
\end{proof}

\begin{Cor}
If $(M,g)$  is an  AH manifold  with  $h > 0$,
then  $M$  has  no  non-trivial Killing  vector  field  of  constant  length.
\end{Cor}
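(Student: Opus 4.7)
The plan is to invoke the two results immediately preceding the statement and to observe that they fit together directly. By Proposition \ref{neg-r}, the hypothesis $h>0$ forces the strict Ricci bound $\Ric(v,v)\le -h^2/(n-1)<0$ for every $v\in SM$, so $(M,g)$ has everywhere negative Ricci curvature. Then Lemma \ref{k} applies verbatim: any Riemannian manifold with $\Ric<0$ admits no non-trivial Killing vector field of constant length.

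To be slightly more explicit about why the two combine cleanly, I would note that a hypothetical non-trivial Killing field $X$ of constant length on $M$ would in particular be nonzero at some point $p$; taking a unit vector $v=X(p)/\|X(p)\|\in S_pM$, Proposition \ref{neg-r} gives $\Ric(X(p),X(p))=\|X(p)\|^{2}\Ric(v,v)<0$. Meanwhile the function $f=\tfrac12 g(X,X)$ is constant by hypothesis, so $\Delta f=0$, and Proposition \ref{delta}(3) reads $0=\|\nabla X\|^{2}-\Ric(X,X)$, forcing $\Ric(X(p),X(p))=\|\nabla X(p)\|^{2}\ge 0$. The two inequalities contradict each other, so no such $X$ exists.

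There is essentially no obstacle here: the corollary is a clean consequence of the Riccati trace inequality (which gives the Ricci estimate from $h>0$) and the standard Bochner-type identity for $|X|^{2}$ that underlies Lemma \ref{k}. The only thing to be careful about is to point out that the strict inequality in Proposition \ref{neg-r} is what is needed---Lemma \ref{k} fails under the mere non-positivity $\Ric\le 0$ (as is already visible from Case (i) of Theorem \ref{const}, where a parallel, hence constant-length, Killing field is perfectly compatible with $\Ric\le 0$ in the AHM setting $h=0$). Thus the hypothesis $h>0$ is used precisely to convert the non-positivity of Ricci, which always holds on AH manifolds, into the strict negativity needed to rule out constant-length Killing fields.
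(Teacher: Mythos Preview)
Your proof is correct and follows exactly the paper's approach: invoke Proposition~\ref{neg-r} to get strictly negative Ricci curvature from $h>0$, and then apply Lemma~\ref{k}. The extra unpacking you give of the contradiction inside Lemma~\ref{k} and the remark about why strict negativity (rather than mere non-positivity) is essential are accurate and helpful, but the argument is the same as the paper's.
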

\begin{proof}
If  $(M,g)$  is  an  AH manifold of constant $h > 0$,
then  by  Proposition \ref{neg-r},   $Ricci_{M}  < 0$. Hence,  from  Lemma
\ref{k},  the conclusion  follows.  
\end{proof}

\section{ First  order   flatness  of  AHM }

In  case  of   harmonic  manifolds  with  polynomial  volume  growth,
the  vector  spaces   $V =  span\{b_{v} | v  \in T_{p}M \}$  and 
 $W =  span\{b_{v}^2 | v \in T_{p}M \}$  are finite  dimensional;
 where  for  a non-unit  tangent  vector  $v  \in  T_{p}M$,  the  corresponding  Busemann 
function is defined by  $ b_{v}^{+} =  b_{v}  =  ||v||  b_{{v}/||v||}$.  
The   proof   of  flatness  of   HM  relies  heavily   on 
finite  dimensionality  of  $W$  (\cite{RS.02b}).
We  term this  as  {\it second  order  flatness}  of  $M$.   
However,  it  turns  out  that   HM  is  flat  if  and  only  if   
 dim $V =  span\{b_{v} | v  \in T_{p}M \}  =  n = $ dim  $M$.
  This  is  the  most  natural  and  the  strongest  criterion  for  flatness 
 of  HM.  We  term this  as  {\it first  order  flatness}  of  complete,  simply connected  Riemannian  manifold  without 
conjugate  points.   \\ 
 \indent
 In this  section,  we  prove the first  order flatness  of  AHM; viz.,
 that  dim $V =  span\{b_{v} | v  \in T_{p}M \}  =  n$,  consequently,   
 an  AHM is flat.   In fact,  we  prove this  type of  general  flatness  result  
 if  dim $V = n$,  in case  of  complete,  simply connected  Riemannian  manifold  without 
conjugate  points;  using  a  theorem  of   Myers   and  Steenrod.

\begin{Thm} (Myers-Steenrod) \label{MS}
If $\phi: M \rightarrow M$  is  a distance-preserving and 
surjective, then it is an isometry (in particular, it is smooth).
\end{Thm}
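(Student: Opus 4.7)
The Myers--Steenrod theorem is a classical result from 1939, and the paper is invoking it as a tool rather than deriving it afresh; nevertheless, the proof follows a clean conceptual arc which I would reproduce as follows. The plan is to show that $\phi$ sends geodesics to geodesics, use this to define a candidate tangent map $d\phi_p$ at each point via the exponential map, verify that $d\phi_p$ is a linear isometry between inner product spaces, and finally conclude that $\phi = \exp_{\phi(p)}\circ d\phi_p\circ \exp_p^{-1}$ locally, which gives smoothness and the isometry property in one stroke.

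First I would record the easy consequences of distance-preservation: $\phi$ is $1$-Lipschitz (hence continuous), injective (since $d(x,y)=0$ forces $x=y$), and by the surjectivity hypothesis a bijection whose inverse $\phi^{-1}$ is also distance-preserving. Next, I would show that $\phi$ carries unit-speed minimizing geodesics to unit-speed minimizing geodesics. The point is that a unit-speed curve $\gamma:[a,b]\to M$ is a minimizing geodesic iff $d(\gamma(s),\gamma(t))=|s-t|$ for all $s,t\in[a,b]$; since $\phi$ preserves distances, this property is preserved by $\phi\circ\gamma$. Locally every geodesic is minimizing, so geodesics (not just their minimizing segments) are preserved.

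Given this, for $v\in T_pM$ define
\[
d\phi_p(v)=\frac{d}{dt}\Big|_{t=0}\phi(\exp_p(tv)),
\]
which exists because $\phi\circ\gamma_v$ is a geodesic. The norm-preservation $\|d\phi_p(v)\|=\|v\|$ is immediate from the fact that $\phi\circ\gamma_v$ is unit speed when $\gamma_v$ is. The key analytic step, and the main obstacle, is to prove that $d\phi_p$ preserves inner products (equivalently, angles) and is therefore linear. I would use the short-distance expansion
\[
d\bigl(\exp_p(su),\exp_p(tv)\bigr)^2=s^2+t^2-2st\,\langle u,v\rangle+O\bigl((s+t)^3\bigr),
\]
valid for any Riemannian manifold and unit vectors $u,v\in T_pM$. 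Applying $\phi$ to both arguments and comparing the $st$-coefficient of the expansion at $\phi(p)$ with that at $p$ gives $\langle d\phi_p(u),d\phi_p(v)\rangle=\langle u,v\rangle$. The polarization identity plus norm-preservation then forces $d\phi_p$ to be additive on pairs of unit vectors pointing in arbitrary directions, and homogeneity in $t$ is built into the definition; a standard argument promotes this to full $\mathbb{R}$-linearity.

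Finally, I would close the argument by showing $\phi$ is smooth and its differential agrees with $d\phi_p$. Because $\phi$ sends the radial geodesic $t\mapsto\exp_p(tv)$ to the radial geodesic $t\mapsto\exp_{\phi(p)}(t\,d\phi_p(v))$, in a normal neighborhood of $p$ we have the identity
\[
\phi=\exp_{\phi(p)}\circ\,d\phi_p\circ\exp_p^{-1}.
\]
The right-hand side is a composition of smooth maps (the linear map $d\phi_p$ being smooth automatically), so $\phi$ is smooth at $p$, and its derivative at $p$ is the linear isometry $d\phi_p$. Since $p$ was arbitrary, $\phi$ is a smooth map whose derivative is pointwise a linear isometry, i.e.\ a Riemannian isometry. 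The main technical hurdle is genuinely the linearity/angle-preservation step, since everything else is either formal or follows from the local geometry of the exponential map; for a fully rigorous treatment I would refer to Helgason's \emph{Differential Geometry, Lie Groups, and Symmetric Spaces}, where the argument is laid out in detail.
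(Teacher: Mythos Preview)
Your outline is the standard, correct proof of the Myers--Steenrod theorem, and you have correctly identified that the paper does not prove this statement at all: it is quoted as a classical result and used as a black box in the proof of Theorem~\ref{bus}. There is therefore nothing in the paper to compare your argument against; your sketch (geodesic preservation, construction of $d\phi_p$ via the exponential map, angle preservation from the second-order distance expansion, and the local identity $\phi=\exp_{\phi(p)}\circ d\phi_p\circ\exp_p^{-1}$) is the textbook route and is fine as written.
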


\begin{Thm}\label{bus}
If $(M^n,g)$   is a complete,  simply connected  Riemannian  manifold  without 
conjugate  points,  such  that  the  vector  space  $V  =  span\{b_{v} | v  \in T_{p}M \}$  is  $n$-dimensional,  then
 the  map   $F  :  M  \rightarrow  {\R}^n$ defined by
$$F(x)  =  (b_{e_1}(x),  b_{e_{2}}(x),  \cdots, b_{e_{n}}(x)),$$
is  a  $C^1$ isometry. Therefore, $F$ is an  isometry  and  hence 
$F$  is  smooth.  Consequently, $M$  is  flat. 
\end{Thm}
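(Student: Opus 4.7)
The plan is to show that $F$ is a $C^1$ bijection with pointwise isometric differential, deduce that $F$ is distance-preserving, and then conclude via Myers--Steenrod (Theorem \ref{MS}) that $F$ is a Riemannian isometry, so $M$ is flat.

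Fix the orthonormal basis $\{e_i\}$ of $T_pM$. Since $M$ has no conjugate points, every Busemann function is $C^1$ with unit gradient, and $\nabla b_v(p) = -v$ for unit $v$. The linear evaluation map $V \to T_pM$, $f \mapsto \nabla f(p)$, sends $b_{e_i}$ to $-e_i$; since $\dim V = n = \dim T_pM$ by hypothesis, this map is an isomorphism, so $\{b_{e_i}\}$ is a basis of $V$. For any unit $v \in T_pM$, the function $b_v$ lies in $V$, so $b_v = \sum_i c_i b_{e_i}$ for some coefficients; applying $\nabla(\cdot)(p)$ gives $-v = -\sum_i c_i e_i$, whence $c_i = \langle v, e_i\rangle$. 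Identifying $T_pM \cong \R^n$ via $\{e_i\}$, one obtains the key identity
\begin{equation*}
 b_v(x) \;=\; \sum_i \langle v, e_i\rangle\, b_{e_i}(x) \;=\; \langle v, F(x)\rangle \qquad \forall\, v \in T_pM,\ x \in M.
\end{equation*}
Differentiating this at an arbitrary $x \in M$ gives $\nabla b_v(x) = \sum_i \langle v, e_i\rangle\, \nabla b_{e_i}(x)$; taking norms shows $\bigl\|\sum_i a_i \nabla b_{e_i}(x)\bigr\| = 1$ whenever $\sum a_i^2 = 1$, which forces the Gram matrix of $\{\nabla b_{e_i}(x)\}$ to be the identity. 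Hence $dF_x: T_xM \to \R^n$ is a linear isometry at every $x \in M$.

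To show $F$ is bijective I would use the $1$-Lipschitz property of $b_v$ combined with $b_v(p) = 0$: then $|\langle v, F(x)\rangle| = |b_v(x)| \le d(p,x)$ for every unit $v$, so $\|F(x)\| \le d(p,x)$. For $x = \gamma_v(t)$ with $t \ge 0$ this gives $\|F(x)\| \le t$, while $b_v(\gamma_v(t)) = -t$ forces the $v$-component of $F(x)$ to equal $-t$; the only vector of norm $\le t$ with such a component is $-tv$ itself, so $F(\gamma_v(t)) = -tv$. Equivalently, $F \circ \exp_p = -\id$ on $T_pM$, and since $\exp_p$ is a diffeomorphism by the Cartan--Hadamard theorem (applied to a simply connected complete manifold without conjugate points), $F$ is a $C^1$ bijection. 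The inverse function theorem then gives that $F^{-1}$ is $C^1$ with $d(F^{-1})$ also a pointwise isometry; both maps preserve the length of every $C^1$ curve, so the standard infimum argument yields $d_M(x,y) = d_{\R^n}(F(x), F(y))$. Myers--Steenrod promotes this distance-preserving bijection to a smooth Riemannian isometry, and therefore $M$ is isometric to $\R^n$ and flat.

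The main obstacle, as I see it, is the transition from the global algebraic hypothesis ``$\dim V = n$'' to the pointwise geometric statement that $\{\nabla b_{e_i}(x)\}$ is orthonormal at \emph{every} $x \in M$. This crucially uses the fact that \emph{every} Busemann function $b_v$ lies in the finite-dimensional $V$ by construction (yielding the linear expansion $b_v = \sum \langle v,e_i\rangle b_{e_i}$), combined with the unit-gradient property $\|\nabla b_v(x)\| = 1$. The second delicate point is pinning $F(\gamma_v(t))$ down to exactly $-tv$: the $1$-Lipschitz bound $\|F(x)\| \le d(p,x)$ and the identity $b_v(\gamma_v(t)) = -t$ have to be combined so tightly that all components of $F(x)$ perpendicular to $v$ vanish.
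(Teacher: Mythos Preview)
Your proof is correct and follows the same strategy as the paper: use the isomorphism $V \cong T_pM$ given by $b \mapsto \nabla b(p)$ to obtain the linear expansion $b_v = \sum_i \langle v, e_i\rangle\, b_{e_i}$, deduce from $\|\nabla b_v\| \equiv 1$ that $\{\nabla b_{e_i}(x)\}$ is orthonormal at every $x$ so that $dF_x$ is a linear isometry, and conclude via Myers--Steenrod. Your explicit bijectivity argument via $F \circ \exp_p = -\mathrm{id}$ (combining the $1$-Lipschitz bound $\|F(x)\| \le d(p,x)$ with $b_v(\gamma_v(t)) = -t$ and the equality case of Cauchy--Schwarz) is actually sharper than the paper's, which simply asserts that $F$ is a homeomorphism mapping geodesics to geodesics before invoking Myers--Steenrod.
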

\begin{proof}
If  $(M^n,g)$   is a complete,  simply connected  Riemannian  manifold  without 
conjugate  points,   then  $b_{v}   \in  C^{1}(M)$   \cite{E.77}. 
Define,   
$$G  :   V  \rightarrow  T_{p}M  \;\;\;\mbox{by}\;\;    G(b)  =  \nabla b(p).  $$
Note  that   $G$  is  surjective   as   $\nabla b_{v}(p)  =  -v$. 
Since  both the  vector  spaces   $V$  and $T_{p}M$  have  same  dimension,
 $G$  is  $1-1$  too. \\
  Hence,  
 $G(b_{v+w})  =  - (v+w)  =    G(b_{v})  +    G(b_{w}).$   Thus,
 \begin{eqnarray}\label{s&p}
 b_{v+w}   =  b_{v}  +   b_{w}, \;\;\  b_{av}   = a b_{v} \; \mbox{for} \; a \in  \R.
 \end{eqnarray}
 Observe  that  for  any  $v  \in  TM,$  we  have  
$b_{v}  =  ||v||  b_{{v}/||v||}$,  thus  $||\nabla b_{v}||  =  ||v|| ||\nabla b_{{v}/||v||}||  =  ||v||.$   
 Therefore,  we  obtain,
  \begin{eqnarray*}
  || v_{1}+v_{2}||^2 & = & \left<\nabla b_{v_{1} + v_{2}}  (q),   \nabla b_{v_{1} + v_{2}}  (q)  \right> \\
 &  =  &   \left<\nabla b_{v_{1}}(q) , \nabla b_{v_{1}}(q) \right>  +  \left<\nabla b_{v_{2}}(q) , \nabla b_{v_{2}}(q) \right>   +
 2   \left<\nabla b_{v_{1}}(q) , \nabla b_{v_{2}}(q) \right>   \\
 & = &  ||v_{1}||^2  +   ||v_{2}||^2  +  2  \left<\nabla b_{v_{1}}(q) , \nabla b_{v_{2}}(q) \right>.  
   \end{eqnarray*}
 Consequently,  
 \begin{eqnarray}
 \left<\nabla b_{v_{1}}(q) , \nabla b_{v_{1}}(q) \right>   =     \left<v_{1} , {v_{2}}\right>   
  \end{eqnarray} 
 Thus,   $\{\nabla b_{e_{i}} (q) \},  i  =  1,2, \cdots, n$,  forms  an  orthonormal  basis  of   $T_{q}M$ for  all  $q  \in  M$.  \\
 If $F  :  M  \rightarrow  {\R}^n$ is defined by  $F(x)  =  (b_{e_1}(x),  b_{e_{2}}(x),  \cdots, b_{e_{n}}(x)), \;   \forall  q  \in M.$
 Then,  $F$  is  $C^1$  and  $DF(q)  :  T_{q}M   \rightarrow  {\R}^n$  is  given by
$$  DF(q)(w)  =  ( \left< \nabla b_{e_1}(q),  w \right>,  \cdots,      \left< \nabla b_{e_n}(q),  w\right>). $$
 This  implies  that 
 $$||DF(q)(w)||^2 =   \sum_{i=1}^{n}   \left<\nabla b_{e_i}(q),    w \right>  =  ||w||^2.$$    
 Therefore,
  $DF(q)  :  T_{q}M   \rightarrow  {\R}^n$  is a linear $C^1$ isometry.  
 We  obtain, $F$ is  an  homoeomorphism  which  maps  geodesics  to  geodesics,
 We  conclude   that  $F$  is  distance  preserving  and  we  have  from  Theorem  \ref{MS}, that 
 $F$  is a smooth  isometry and hence  $M$  is  flat.
  \end{proof}

\begin{Cor}
If $(M^n,g)$   is a complete,  simply connected  Riemannian  manifold  without 
conjugate  points.  Let  $\{{e_i}\}$  be  an orthonormal  basis  of  $T_{p}M$  and let  $\{b_{e_{i}}\} $ 
 be  the corresponding  Busemann  functions  on $M$.   Then   the  following  are equivalent:
\begin{itemize}
\item[(i)] The  vector  space  $V =  span\{b_{v} | v \in T_{p}M \}$
is  finite  dimensional  and  dim  $V  = $  dim $M = n$.   
\item[(ii)]
$F  :  M  \rightarrow  {\R}^n$ defined by
$F(x)  =  (b_{e_1}(x),  b_{e_{2}}(x),  \cdots, b_{e_{n}}(x)),$
is  an  isometry   and  therefore, $M$  is  flat. 
\item[(iii)]  $\{\nabla b_{e_i}(p) \}$  is  a  global  parallel  orthonormal basis  of  $T_{p}M$  
for  any  $p \in  M$.   Thus,  $M$  is  a  parallizable  manifold. 
\end{itemize}
\end{Cor}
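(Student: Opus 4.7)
The corollary is simply a compact repackaging of Theorem~\ref{bus} together with its converses, so the plan is to prove the chain
$(i)\Rightarrow(ii)\Rightarrow(iii)\Rightarrow(i)$, leaning on Theorem~\ref{bus} for one direction and using the Myers--Steenrod Theorem~\ref{MS} together with the standard Cartan--Hadamard-type local-isometry-to-covering argument for the other two.

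\textbf{Step 1: $(i)\Rightarrow(ii)$.} This is precisely the content of Theorem~\ref{bus}: once $\dim V=n$, one defines $F(x)=(b_{e_1}(x),\dots,b_{e_n}(x))$ and shows, via the surjective-hence-bijective map $G:V\to T_pM,\ b\mapsto \nabla b(p)$, that $F$ is a $C^1$ distance preserving surjection, hence an isometry by Myers--Steenrod, so $M$ is flat. No new work is required.

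\textbf{Step 2: $(ii)\Rightarrow(iii)$.} Assume $F$ is an isometry of $M$ onto flat $\R^n$. On $\R^n$ the coordinate vector fields $\partial/\partial y_i$ form a global parallel orthonormal frame. The $i$-th component of $F$ is $b_{e_i}$, so $\nabla b_{e_i}=F^{*}(\partial/\partial y_i)$. Since $F$ is an isometry it preserves the Levi-Civita connection, and therefore $\{\nabla b_{e_i}\}$ is a global parallel orthonormal frame on $M$; in particular, evaluating at any $p$ gives a parallel orthonormal basis, so $M$ is parallelizable.

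\textbf{Step 3: $(iii)\Rightarrow(i)$.} This is the step doing real work. Assume $\{\nabla b_{e_i}\}$ is a global parallel orthonormal frame. Define $F:M\to\R^n$ by $F(x)=(b_{e_1}(x),\dots,b_{e_n}(x))$. Computing as in Theorem~\ref{bus}, at every $q\in M$ the differential $DF(q)$ sends the orthonormal basis $\{\nabla b_{e_i}(q)\}$ to the standard basis of $\R^n$, so $DF(q)$ is a linear isometry. Hence $F$ is a local isometry from the complete, simply connected manifold $M$ to complete $\R^n$; by the standard argument (a local isometry from a complete manifold is a Riemannian covering, and a covering of a simply connected space from a connected space is a diffeomorphism), $F$ is a global isometry. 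In particular $M$ is flat. Now in $\R^n$ the Busemann function associated to a vector $v=\sum c_i e_i$ is the affine functional $x\mapsto -\langle v,x\rangle$; transporting this back via $F$, and using that $\nabla b_{e_i}(p)=-e_i$ at the basepoint, one gets $b_v=\sum c_i\, b_{e_i}$ for every $v\in T_pM$. Thus $V$ is spanned by the $n$ functions $b_{e_1},\dots,b_{e_n}$; their linear independence follows from that of their gradients at $p$, so $\dim V=n$.

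\textbf{Expected obstacle.} The one non-formal ingredient is in Step~3, namely promoting $F$ from a pointwise linear isometry to a global isometry. This rests on the local-isometry-to-covering lemma, which requires completeness (given), and then on simple connectedness to promote the covering to a diffeomorphism; this uses the Cartan--Hadamard structure of $M$ (no conjugate points plus simple connectedness makes $\exp_p$ a diffeomorphism), ensuring $F$ is well-defined as a map to $\R^n$ and not merely a local construction. Once this is in place, identifying Busemann functions on $\R^n$ with affine functionals is routine and closes the loop back to~(i).
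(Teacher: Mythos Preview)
Your proof is correct and follows precisely the route the paper intends: the Corollary is stated in the paper without proof, as an immediate repackaging of Theorem~\ref{bus}, and your cycle $(i)\Rightarrow(ii)\Rightarrow(iii)\Rightarrow(i)$ fills in exactly the expected details. The only minor remark is that in Step~3 you could equally well finish, as the paper does in Theorem~\ref{bus}, by noting that $F$ is distance-preserving and invoking Myers--Steenrod (Theorem~\ref{MS}) rather than the covering-map argument; both routes are standard and equivalent here.
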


\begin{Thm}\label{bus-par}
If in  an  AHM, $X$  is a  parallel   vector  field  with  $||X|| = 1$,  then  $X =  \nabla  b_{v}$, 
for  some  $v \in  SM$.  Consequently,  a parallel  Killing  vector  field  in  AHM  with  $||X|| = 1$  is  of the 
form   $\nabla  b_{v}$.
\end{Thm}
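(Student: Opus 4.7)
My plan is to exploit the fact that a parallel unit vector field generates a flow by isometries whose orbits are geodesics staying at constant distance from one another, and then use the bi-asymptotic property of AHM (together with uniqueness of asymptotes) to identify this field with the gradient of a Busemann function.

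First, since $\nabla X \equiv 0$, the endomorphism $Y \mapsto \nabla_Y X$ is trivially skew-symmetric, so Proposition \ref{k1} gives that $X$ is a Killing field, and hence its flow $\Phi_t$ acts on $M$ by isometries. Moreover $\nabla_X X = 0$ together with $||X|| = 1$ (or directly Proposition \ref{c}) shows that every integral curve of $X$ is a unit speed geodesic. Fix a basepoint $p \in M$ and set $v = -X(p) \in S_pM$; then the integral curve of $X$ through $p$ coincides with $\gamma_{-v}(t) = \exp_p(-tv)$, because both $X$ and $\gamma_{-v}'$ are parallel unit vector fields along $\gamma_{-v}$ agreeing at $t=0$.

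For any other point $q \in M$, the orbit $c(t) := \Phi_t(q)$ is again a geodesic with $c'(0) = X(q)$, and the isometry property of the flow gives
\[
d(\gamma_{-v}(t), c(t)) \;=\; d(\Phi_t(p), \Phi_t(q)) \;=\; d(p,q)
\]
uniformly in $t \in \R$. Corollary \ref{bi-asy} therefore forces $c$ to be bi-asymptotic to $\gamma_{-v}$, while Corollary \ref{b1} identifies the unique asymptote to $\gamma_{-v}$ starting at $q$ as the geodesic with initial velocity $-\nabla b_{-v}(q)$. Using Lemma \ref{lem:bus} to rewrite $b_{-v} = b_v^- = -b_v$, we get $-\nabla b_{-v}(q) = \nabla b_v(q)$; matching initial velocities yields $X(q) = \nabla b_v(q)$. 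Since $q$ was arbitrary this gives $X = \nabla b_v$ globally, and the ``consequently'' clause of the theorem is immediate since every parallel field is automatically Killing.

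The main point to get right is purely bookkeeping: keeping straight the sign conventions among $v$, $-v$, the Busemann functions $b_v = b_v^+$, $b_{-v}^+ = b_v^-$, and the direction in which $\nabla b_v$ points away from infinity. Once that is nailed down, the identification is essentially forced by uniqueness of asymptotes combined with the isometric character of the flow generated by a parallel unit vector field.
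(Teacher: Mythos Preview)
Your proof is correct, but it follows a genuinely different route from the paper's own argument. The paper first invokes the general fact (cited from \cite{P.98}) that a parallel unit vector field on a simply connected manifold is the gradient of a smooth distance function $f$, so that $X = \nabla f$ with $\|\nabla f\| = 1$; it then compares $f$ with $b_v^+$ directly, using that the distance between the parallel level sets $f^{-1}(-t)$ and $f^{-1}(-c)$ equals $|t-c|$ to obtain $b_v^+ \ge f$, and concludes via the strong minimum principle applied to the nonnegative harmonic function $b_v^+ - f$ (which vanishes at $p$).

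Your argument is more dynamical: you use that a parallel field is Killing, that its flow is by isometries with geodesic orbits, and that consequently all orbits stay at constant distance from the reference orbit $\gamma_{-v}$; you then feed this into Corollary~\ref{bi-asy} and the uniqueness of asymptotes (Corollary~\ref{b1}) to pin down $X(q) = -\nabla b_{-v}(q) = \nabla b_v(q)$. This is conceptually clean and reuses the \S2 machinery, but note that it is logically heavier: Corollary~\ref{bi-asy} depends on the Liouville property (Corollary~\ref{Lio1}) established only in \S3, whereas the paper's proof needs nothing beyond the elementary strong minimum principle for harmonic functions. On the other hand, your route avoids having to cite the external fact that parallel unit fields are gradients of distance functions.
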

\begin{proof}
It is  well  known  that   a  parallel  vector  field  in a  simply  connected  
manifold  is  the  gradient field for a  distance  function   (cf.  \cite{P.98},  pg.  $192$).
We  obtain a  $f  \in  C^{\infty}(M)$  such  that  $||\nabla f|| = 1$
and  $X  =  \nabla f$.   Therefore,   integral  curves  of  $X$  are unit speed  geodesics  of $M$
and level  sets  of  $f$  are  parallel  family  of  hypersurfaces  in  $M$  (refer  \cite{RS.03}
and  \cite{P.98}).  We  have  that every  geodesic  of an  AHM $M$  is a line.  Let  $\gamma_{v}$  be  a  geodesic  of  $M$
with  $\gamma_{v} (0) = p$, then   $X(\gamma_{v}(t))  =  \gamma_{v}'(t) =  \nabla{f}(\gamma_{v}(t)).$
We  may  assume  that  $f(p) =0$. 
Thus,  $\nabla f(p) = \gamma_{v}'(0) = v$  implies  that $f(\gamma_{v}(t)) = -t$.  If  $x \in f^{-1}(-c)$, then
 $d(x,\gamma_{v}(t))  \geq  d( f^{-1}(-t), f^{-1}(-c)) = |-t + c| = t-c $.  Equivalently,   $b_{v}^{+}(x) \geq f(x)$.
 But,  as  $\nabla X  =  {\nabla}^2 f = 0 $,  $f$  is  a  harmonic  function. 
 Therefore,  $b_{v}^{+}(x) - f(x)$  is  a nonnegative  harmonic  function  which  attains  its  minimum  at  $p$
 and  hence  must  be  a  constant  function.   Therefore,   $f =  b_{v}^{+}$.  
 \end{proof}

Now  we  can  prove  our  main  Theorem  \ref{f}  as  stated  in the introduction. \\     

Theorem 1.5 : $(M^n,g)$ be  an AHM  with $\{{e_i}\}$  an orthonormal  basis  of  $T_{p}M$  and   $\{b_{e_{i}}\}$,
 the corresponding  Busemann  functions  on $M$.   Then,
 \begin{itemize}
\item[(1)] The  vector  space  $V =  span\{b_{v} | v \in T_{p}M \}$
is  finite  dimensional  and  dim  $V  = $  dim $M = n$.
\item[(2)]  $\{\nabla b_{e_i}(p) \}$  is  a  global  parallel  orthonormal basis  of  $T_{p}M$  
for  any  $p \in  M$.   Thus,  $M$  is  a  parallizable  manifold.\\
And
\item[(3)]
$F  :  M  \rightarrow  {\R}^n$ defined by
$F(x)  =  (b_{e_1}(x),  b_{e_{2}}(x),  \cdots, b_{e_{n}}(x)),$
is  an  isometry   and  therefore, $M$  is  flat. 
\end{itemize}
  \begin{proof}
If $(M^n,g)$  is an AHM, then  by  Theorem  \ref{bus-par}
there  exists  a  parallel   Killing  vector  field $X$ of the  form   $X =  \nabla  b_{v}$.
Since,  $X$  is  parallel,  the  vector  distribution  orthogonal  to  $X$  is  also  parallel and
involute  on  $M$.  Thus,  by  Theorem \ref{bus-par}, if  $\{{e_i}\}$  is  an orthonormal  basis  of  $T_{p}M$  for  any  $p  \in M$,
then  $\{\nabla b_{e_{i}} \}$  is  a  global  parallel  basis  of  $T_{p}M$  for  any  $p \in  M$.\\
Now  consider the vector  space  $V =  span\{b_{v} | v \in T_{p}M \}$.
Let  $b  =  \sum_{\alpha}  a_{\alpha}  b_{v_{\alpha}} $. But,  $\nabla b  =   \sum_{i=1}^{n}  a_{i}  \nabla  b_{e_i}  $.
This  implies  that   $a_{\alpha}  =  a_{i}$  and   $\nabla  b_{v_{\alpha}}  =    \nabla  b_{e_i}$.
We obtain,   $V =  span\{b_{v} | v \in T_{p}M \}  =   span \{b_{e_{i}} \},$  is  of  dimension $n$.
We conclude from  Theorem  \ref{bus}  that  $M$  is  flat. 
\end{proof}

Second  order  flatness  of  HM  is  already known  from \cite{RS.02b}.  Now  from  the  proof   of  Theorem  \ref{f},
we  also have first  order  flatness  of  HM,  as  per  our  expectations.

\begin{Cor}
If $(M^n,g)$  is an HM, then  the  vector  space  $V =  span\{b_{v} | v \in T_{p}M \}$
is  finite  dimensional  and  dim  $V  = $  dim $M = n$  and  hence,  $M$  is  flat
of order one.
\end{Cor}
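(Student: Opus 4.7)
The plan is to obtain this corollary as a direct consequence of Theorem \ref{f}, once we observe that the class HM of harmonic manifolds with minimal horospheres is contained in the class AHM. First I would verify this inclusion. Recall from the introduction that any simply connected, complete, non-compact harmonic manifold has no conjugate points (Allamigeon's theorem) and is asymptotically harmonic, with horospheres of constant mean curvature $h$ (see \cite{RS.03}). When the horospheres are further assumed to be minimal, $h=0$, which is exactly the defining property of an AHM in the sense of Definition \ref{ah}.

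Once the inclusion HM $\subset$ AHM is in place, the corollary is immediate. Applying part (1) of Theorem \ref{f} to the AHM $(M^n,g)$ gives that $V = \mathrm{span}\{b_v \mid v \in T_p M\}$ is finite dimensional of dimension $n$, and applying part (3) gives that the map $F(x) = (b_{e_1}(x),\dots,b_{e_n}(x))$ is an isometry onto $\R^n$, so $M$ is flat. Thus $M$ is first order flat in precisely the sense introduced earlier in this section.

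There is essentially no further obstacle, since all the hard work has already been carried out inside Theorem \ref{f}: Strong Liouville Type Property (Corollary \ref{Lio1}), the fact that every Killing field on an AHM is parallel (Theorem \ref{const}, Corollary \ref{non-trivial}), the identification of parallel unit vector fields with gradients of Busemann functions (Theorem \ref{bus-par}), and the Myers--Steenrod argument (Theorem \ref{bus}). The content of this corollary is therefore not a new calculation but a conceptual strengthening of the flatness result of \cite{RS.02b}: whereas that paper required finite dimensionality of the second order space $W = \mathrm{span}\{b_v^2\}$, the inclusion HM $\subset$ AHM together with Theorem \ref{f} now yields the sharper first order statement $\dim V = n$ directly, without any averaging argument specific to harmonic manifolds.
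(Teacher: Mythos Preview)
Your proposal is correct and matches the paper's approach exactly: the paper does not give a separate proof of this corollary but simply remarks, just before stating it, that ``from the proof of Theorem \ref{f}, we also have first order flatness of HM,'' relying implicitly on the inclusion HM $\subset$ AHM that you spell out. Your write-up in fact makes the logic more explicit than the paper does.
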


\subsection{Comparison  of  first  order and  second order flatness  of  HM}
 In this  subsection,  we  sketch the  proof of second order flatness  of  HM
  of  \cite{RS.02b}  and observe the  merits  of  first  order flatness  of  AHM,
  described  in  this  paper. \\
  $(M,g)$  be  a  complete  manifold.   Fix $p  \in  M$,  $r(x) =  r(p,x)$,
 denotes  distance of  $x \in  M$ from  fixed  point.  ${\cW \cM}_R(\lambda, b)$,
 denotes  the  weak mean value  inequality  as  defined  in  $\S  3$.
  \noindent
 ${\cH}_l(M)$ be the linear space of all harmonic
functions $f$ on $M$ with a polynomial growth of order at
most $l$.
$$ {\cH}_l(M) = \left\{f:M \rightarrow {\R},\;\Delta f 
=  0\;\;\mbox{and}
\;\;\left|f(x)\right| \leq O\left( r^l(x) \right)\right\}.$$

 Theorem 4.2 of  Li -Wang  \cite{LW.99}  is  important
 in  our  discussion.
 
\begin{Thm}\label{fd}   \cite{LW.99}  
  $(M,g)$  be  a  complete  manifold  satisfying  the  weak mean value inequality  
${\cW \cM}_R(\lambda, b)$.  Suppose  that  the  volume growth  of $M$  satisfies
$V_{p}(r)  = O (r^{\nu})$  for  some  point $p \in  M$   and  any  $\nu  > 0$.
 Then   ${\cH}_l(M)$  is  finite dimensional   for  all  $l  \geq  0$  and
dim ${\cH}_l(M) \leq  \lambda (2b+1)^{(2l+\nu)}.$ 
\end{Thm}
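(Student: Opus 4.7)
The plan is to follow the Li--Wang dimension estimate for polynomial-growth harmonic function spaces, playing off the weak mean-value inequality against the polynomial volume growth. Let $K \subset {\cH}_l(M)$ be any finite-dimensional subspace of dimension $k$; it suffices to prove $k \le \lambda(2b+1)^{2l+\nu}$, from which the stated bound on $\dim {\cH}_l(M)$ follows.

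For each $r>0$, equip $K$ with the inner product $\langle u,v\rangle_r = \int_{B_p(r)} uv$ (non-degenerate for $r$ large, by unique continuation), choose an orthonormal basis $\{\phi_i^r\}_{i=1}^k$, and consider the trace $F_r(x) = \sum_i (\phi_i^r(x))^2$. This is basis-independent and satisfies both $\int_{B_p(r)} F_r = k$ and the variational characterization $F_r(x) = \sup\{f(x)^2 : f \in K,\ \|f\|_r \le 1\}$. Each $f \in K$ is harmonic, so $f^2$ is non-negative and subharmonic. Setting $s = r/(2b+1)$ ensures $B_x(bs) \subset B_p((b+1)s) \subset B_p(r)$ whenever $x \in B_p(s)$; the weak mean-value inequality then gives $f(x)^2 \le (\lambda/V(x,s))\,\|f\|_r^2$, and taking the variational supremum yields the pointwise bound $F_r(x) \le \lambda / V(x,s)$ on $B_p(s)$.

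To couple this with the polynomial growth hypothesis, note that $|f(x)| \le C_f(1+r(x))^l$ together with $V(p,r) \le Cr^\nu$ forces $\|f\|_r^2 \le C_f'\,r^{2l+\nu}$, so the Gram determinant $\det G(r)$ (in any fixed basis of $K$) grows no faster than $r^{k(2l+\nu)}$. A pigeonhole argument on $\log \det G$ along the geometric sequence $r_m = (2b+1)^m r^*$ produces arbitrarily large $r_0$ at which the uniform doubling
\[
\|f\|_{(2b+1)r_0}^2 \le (2b+1)^{2l+\nu}\|f\|_{r_0}^2
\]
holds simultaneously for every $f \in K$.

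Finally, set $R = (2b+1) r_0$ at such a good $r_0$, and let $\{\phi_i^R\}$ be the $\|\cdot\|_R$-orthonormal basis. Applying the pointwise estimate to $F_R$ with $s = r_0$, combined with the volume comparison $V(x, r_0) \ge V(p, r_0/2)$ valid on $B_p(r_0/2)$, gives $\int_{B_p(r_0/2)} F_R \le \lambda$. On the other hand, the uniform doubling (applied once, plus a benign comparison between $\|\cdot\|_{r_0/2}$ and $\|\cdot\|_{r_0}$ coming from the same pigeonhole) forces $\int_{B_p(r_0/2)} F_R = \sum_i \|\phi_i^R\|_{r_0/2}^2 \ge k\,(2b+1)^{-(2l+\nu)}$. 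Comparing the two estimates yields $k \le \lambda (2b+1)^{2l+\nu}$. The principal obstacle is the uniform doubling step: polynomial growth is a function-by-function assertion, so one must pass through the full Gram determinant and the pigeonhole along a long geometric sequence of radii to extract the precise constant $(2b+1)^{2l+\nu}$ valid on all of $K$ at once; the remaining bookkeeping of WMV and concentric volume comparisons is straightforward.
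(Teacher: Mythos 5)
The paper does not prove this statement at all: it is quoted as Theorem 4.2 of Li and Wang \cite{LW.99}, so there is no internal proof to compare against. Your reconstruction follows the correct (and essentially standard) strategy --- Peter Li's Gram-determinant pigeonhole combined with the weak mean value inequality applied to the subharmonic functions $f^2$ --- and the two endpoint estimates (the upper bound $\int F_R\le\lambda$ via the variational characterization of the trace function, and a lower bound of the form $k\cdot(\mathrm{const})^{-1}$ from the pigeonhole) are the right ones.

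However, the bridge between them has a genuine flaw. The pigeonhole on $\log\det G$ along $r_m=(2b+1)^m r^*$ controls the \emph{determinant ratio} $\det A_{r_0}/\det A_{(2b+1)r_0}=\prod_j\mu_j$, where $\mu_j\le 1$ are the eigenvalues of the relative Gram form. From $\prod_j\mu_j\ge (2b+1)^{-k(2l+\nu+\delta)}$ one can only conclude $\mu_j\ge (2b+1)^{-k(2l+\nu+\delta)}$ for each $j$; that is, your ``uniform doubling for every $f\in K$'' holds only with the constant raised to the power $k$. With the constant $(2b+1)^{2l+\nu}$ as you state it, it is an assertion about the smallest eigenvalue that the pigeonhole does not deliver. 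What you actually need --- and what Li's lemma asserts --- is the \emph{trace} bound $\sum_i\|\phi_i^R\|^2_{r_0}\ge k(2b+1)^{-(2l+\nu+\delta)}$, which follows from the determinant ratio by the arithmetic--geometric mean inequality, not by bounding each basis element separately. Two smaller points: (i) your upper and lower bounds live on mismatched balls ($B_p(r_0/2)$ versus $B_p(r_0)$), so as written the radius ratio entering the pigeonhole is $2(2b+1)$ and you only obtain $k\le\lambda(2(2b+1))^{2l+\nu}$; the standard fix is to take $s=2r_0$ in the WMV step, so that $V(x,2r_0)\ge V(p,r_0)$ for $x\in B_p(r_0)$ and both estimates are taken over $B_p(r_0)$ with $R=(2b+1)r_0$. (ii) Since $\det A_r\le Cr^{k(2l+\nu)}$ only with an uncontrolled constant $C$, the pigeonhole yields the exponent $2l+\nu+\delta$ for each $\delta>0$, and the clean constant is recovered by letting $\delta\to 0$ at the very end.
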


\begin{Cor}\label{sqo}
Under  the  conditions  of  Theorem \ref{fd},
the  vector  space  
\begin{eqnarray*}
{\cF} = \left\{ f:M \rightarrow {\R},\;\Delta f =
c\;\;\mbox{and} \;\;\left| f(x) \right|
\leq O \left( r^2(x) \right) \right\}.
\end{eqnarray*}  
 is also a finite dimensional space  and   dim ${\cF} = 1+ $ dim   ${\cH}_2(M).$  
 \end{Cor}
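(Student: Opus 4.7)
The plan is to reduce the dimension count to a standard rank–nullity argument, using Theorem~\ref{fd} to handle the kernel and then exhibiting a single witness to pin down the image. More precisely, I would introduce the linear map
\begin{equation*}
 L : \mathcal{F} \longrightarrow \mathbb{R}, \qquad L(f) := \Delta f,
\end{equation*}
which is well-defined because membership in $\mathcal{F}$ forces $\Delta f$ to be a constant, and is obviously linear. Its kernel consists of quadratic-growth harmonic functions, so $\ker L = \mathcal{H}_2(M)$, which is finite dimensional by Theorem~\ref{fd} (applied with $l=2$), with the bound $\dim \mathcal{H}_2(M)\le \lambda(2b+1)^{4+\nu}$. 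Its image lies in $\mathbb{R}$, hence has dimension $0$ or $1$. Rank–nullity gives immediately
\begin{equation*}
 \dim \mathcal{F} \;=\; \dim \mathcal{H}_2(M) + \dim \operatorname{Im} L \;\le\; 1 + \dim \mathcal{H}_2(M),
\end{equation*}
so $\mathcal{F}$ is automatically finite dimensional; the only question is whether equality holds.

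Equality amounts to showing that $L$ is surjective, i.e.\ producing at least one $f_0\in \mathcal{F}$ with $\Delta f_0 = c_0\neq 0$. On a harmonic manifold (the setting where Corollary~\ref{sqo} is actually invoked in \cite{RS.02b}), such a witness can be written down explicitly: fixing $p\in M$ and denoting by $V(p,r)$, $A(p,r)$ the volume and area functions, the radial function
\begin{equation*}
 \phi(x) \;:=\; \int_0^{r(x)} \frac{V(p,s)}{A(p,s)}\, ds
\end{equation*}
satisfies $\Delta\phi = 1$ by a direct computation using $\Delta r = A'(p,r)/A(p,r)$ and $V'(p,r)=A(p,r)$. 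The polynomial volume growth hypothesis $V(p,r)=O(r^\nu)$ together with the harmonic-manifold identity forces $V(p,s)/A(p,s)$ to grow at most linearly, whence $\phi(x)=O(r^2(x))$, so $\phi\in\mathcal{F}$ with $L(\phi)=1$. Once one such $f_0$ is available, linearity of $L$ gives $\operatorname{Im} L=\mathbb{R}$, establishing the claimed dimension formula.

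The main obstacle, and the only place any genuine geometry enters, is producing $f_0$ in the generality of Theorem~\ref{fd}. Under the bare hypotheses of the weak mean value inequality plus polynomial volume growth, there is no a~priori reason for a quadratic-growth solution of the Poisson equation $\Delta u=1$ to exist; the rank–nullity bookkeeping is the easy half of the argument. One way around this is to pass to the Poisson equation on an exhaustion $B(p,R_k)$ with zero boundary data and extract a limit whose growth is controlled by $V(p,r)/A(p,r)$ via a standard weak mean-value gradient estimate in the spirit of Li–Wang \cite{LW.99}; this works provided $V/A$ is at most linear, which follows from the polynomial volume growth assumption. In the harmonic setting this technical step collapses to the explicit radial formula above, and this is presumably all that is needed for the application to $W=\operatorname{span}\{b_v^2\}$ in \cite{RS.02b}.
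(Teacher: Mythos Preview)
The paper states Corollary~\ref{sqo} without proof, so there is nothing to compare against directly; your rank--nullity reduction via the linear map $L:\mathcal{F}\to\mathbb{R}$, $L(f)=\Delta f$, with $\ker L=\mathcal{H}_2(M)$, is the natural argument and is correct. One minor point: rank--nullity as usually stated requires the domain to be finite dimensional, but the workaround is immediate---if $L\not\equiv 0$, pick any $f_0$ with $L(f_0)\neq 0$ and observe $\mathcal{F}=\mathbb{R}f_0\oplus\ker L$.

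On the witness: your radial construction $\phi(x)=\int_0^{r(x)}V(p,s)/A(p,s)\,ds$ is correct on a harmonic manifold, but it is more than is needed, and your claim that polynomial volume growth forces $V/A$ to be at most linear is not obvious without already knowing something close to flatness. The paper's intended application (Corollary~\ref{est}) is to harmonic manifolds with $h=0$, and there the canonical witness is simply $b_v^2$ itself: one has $\Delta b_v^2=2b_v\Delta b_v+2\lvert\nabla b_v\rvert^2=2$ and $\lvert b_v(x)\rvert\le r(x)$, so $b_v^2\in\mathcal{F}$ with $L(b_v^2)=2$. This is exactly why $W=\operatorname{span}\{b_v^2\}\subset\mathcal{F}$ in the first place, and it sidesteps any growth estimate on $V/A$.

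You are right that under the bare hypotheses of Theorem~\ref{fd} (weak mean value inequality plus polynomial volume growth) the existence of a quadratic-growth solution to $\Delta u=1$ is not automatic; the equality $\dim\mathcal{F}=1+\dim\mathcal{H}_2(M)$ as stated in full generality is therefore stronger than what the argument in the paper actually requires or justifies. For the downstream use in Corollary~\ref{est}, only the inequality $\dim W\le\dim\mathcal{F}\le 1+\dim\mathcal{H}_2(M)$ is needed, and that follows from your kernel--image bound without any witness at all.
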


It is well known that all
harmonic manifolds satisfy Mean Value Inequality $ {\cM}_R(\lambda)$ with 
$\lambda = 1$ for all $R$. See  \cite{W.50}.
Thus,  they  also  satisfy   ${\cW \cM}_R(\lambda, b)$,  for  $\lambda = 1$ 
and  any  $b >1$.  

\begin{Cor}\label{est}
 $(M^n,g)$  be  a  harmonic manifold of  polynomial volume growth,
$V_{p}(r)  = O (r^{\nu})$. Then  $V =  span\{b_{v} | v  \in T_{p}M \}$   and $W =  span\{b_{v}^2 | v \in T_{p}M \}$  
are finite  dimensional vector  spaces  of  dimension  $\leq  3^{(2+n)}$  and  $\leq  1+  3^{(4+n)}$, 
respectively.  
 \end{Cor}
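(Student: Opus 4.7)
The strategy is to embed the two Busemann function spaces into the growth-controlled function spaces governed by the Li--Wang dimension estimate of Theorem~\ref{fd}, and then invoke that theorem (respectively its Corollary~\ref{sqo}) with $\lambda=1$, $l=1$, and $l=2$.

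I would begin by noting that in a harmonic manifold one has $\Delta b_v \equiv h$, where $h=\lim_{r\to\infty}\Theta'(r)/\Theta(r)$; if $h>0$ then $\Theta(r)$ grows at least like $e^{hr}$, contradicting polynomial volume growth. Thus $h=0$, every Busemann function is harmonic, and one has $\|\nabla b_v\|\equiv \|v\|$. The same harmonic-manifold rigidity (the density $\Theta$ is independent of point and direction) pins down the polynomial exponent to $\nu=n$, so Theorem~\ref{fd} can be applied with this value of $\nu$.

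For $V$, the Lipschitz estimate $|b_v(x)-b_v(q)|\le \|v\|\,d(x,q)$ shows that $b_v$ has linear growth, so $V\subset \cH_1(M)$. Since every harmonic manifold satisfies the mean value equality $\cM_R(1)$ for all $R$, it a fortiori satisfies ${\cW \cM}_R(1,b)$ for every $b>1$, and Theorem~\ref{fd} gives
\[
\dim V \le \dim \cH_1(M) \le (2b+1)^{2+n}.
\]
Letting $b\to 1^+$ (since the dimension is integer-valued, the infimum is attained) yields $\dim V \le 3^{2+n}$. For $W$, a direct computation using $\Delta b_v=0$ and $\|\nabla b_v\|\equiv\|v\|$ gives
\[
\Delta(b_v^2)=2\|\nabla b_v\|^2+2b_v\,\Delta b_v=2\|v\|^2,
\]
which is a constant, and $b_v^2$ has quadratic growth. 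Hence each generator of $W$ lies in the space $\cF$ of Corollary~\ref{sqo}, so $W\subset\cF$ and
\[
\dim W \le \dim \cF = 1+\dim \cH_2(M) \le 1+3^{4+n},
\]
using Theorem~\ref{fd} with $l=2$ and the same limiting argument in $b$.

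The main subtlety, rather than the embeddings $V\subset\cH_1$ and $W\subset\cF$ which are essentially bookkeeping, is justifying the clean exponent $\nu=n$ in the conclusion: this relies on the structural fact that the radial density of a harmonic manifold admits no faster-than-$r^{n-1}$ polynomial behaviour, so that $V_p(r)=O(r^n)$ is the correct replacement for the abstract hypothesis of Theorem~\ref{fd}. The limiting passage $b\to 1^+$ is the second technical point, handled by the discreteness of dimension.
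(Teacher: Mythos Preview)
Your proof is correct and follows essentially the same approach as the paper: embed $V$ into $\cH_1(M)$ via the Lipschitz bound on $b_v$ and $W$ into the space $\cF$ of Corollary~\ref{sqo} via $\Delta b_v^2=\mathrm{const}$, then invoke the Li--Wang estimate with $\lambda=1$ and $\nu=n$. Your limiting argument $b\to 1^+$ (using integrality of the dimension) is in fact more careful than the paper, which simply writes $(2b+1)^{2l+\nu}=3^{2l+n}$ without comment.
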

\begin{proof}      
 If  $(M,g)$ is  a  harmonic manifold of  polynomial volume growth,
$V_{p}(r)  = O (r^{\nu})$  implies  that  $V_{p}(r)  = O (r^n)$  and
 $\Delta  b_{v}  = 0$,  for  all  $v  \in  SM$.   Note  that as $b_{v}$  is a  Lipschitz  function, 
$|b_{v}(x) | \leq  O(r(x))$ and hence  $V =  span\{b_{v} | v \in S_{p}M \}$  is  finite  dimensional
(Theorem  \ref{fd})  and  dim $V \leq  (2b+1)^{(2+\nu)}=  3^{(2+n)} >>>n.$ \\  Also,  
$\Delta  b_{v}^2  =  2$  for all $v \in  SM$  and $|b_{v}^2(x) | \leq  O(r^2(x))$.
Consequently,  $W =  span\{b_{v}^2 | v \in T_{p}M \}$  (Corollary  \ref{sqo})
is  also  finite  dimensional vector  space and  dim  $W  \leq  1+ (2b+1)^{(4+\nu)} = 1+  3^{(4+n)}.$ 
\end{proof}

We  shortly  describe the proof  of  theorem  in  \cite{RS.02b},  that  harmonic  manifolds  with
polynomial volume growth  are flat.

\begin{Thm}\label{sketch}
Harmonic  manifolds  with
polynomial  volume  growth  are flat
\cite{RS.02b}.
\end{Thm}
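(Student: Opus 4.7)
The plan is to follow the strategy of \cite{RS.02b}: reduce to the minimal-horosphere case, then use the finite-dimensionality of $W = \operatorname{span}\{b_v^2 : v \in T_pM\}$ via an averaging argument to extract a differential identity forcing Ricci flatness, and finally conclude via the Riccati equation.

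First I would observe that a harmonic manifold $M$ of polynomial volume growth must satisfy $h = 0$: on a harmonic manifold the density depends only on the radius, $\Theta_p(u,r) = \Theta(r)$, and $\Theta'(r)/\Theta(r) \to h$, so any $h > 0$ forces at least exponential growth of $\Theta$. Hence $M$ is HM and every Busemann function $b_v$ is harmonic, so by Corollary \ref{est} both $W$ and the space $\cF$ of Corollary \ref{sqo} are finite-dimensional. Fix $p \in M$ and form the averaged function
\[
\phi(x) = \int_{S_pM} b_v^2(x)\, d\sigma(v),
\]
with $d\sigma$ the normalized spherical measure. Since $\Delta b_v^2 = 2\|\nabla b_v\|^2 + 2 b_v \Delta b_v = 2$ uniformly in $v$, one obtains $\Delta\phi = 2$. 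Expanding $b_v^2$ in Taylor series along a geodesic $\gamma_w$ with $\gamma_w(0)=p$, $\gamma_w'(0)=w$, using $b_v(p)=0$, $\nabla b_v(p) = -v$ and $\nabla^2 b_v(p) = -u^-(v)$, and then averaging over $v$ with the standard moment identities $\int \langle v,w\rangle^2 d\sigma = 1/n$, etc., produces $\phi(\gamma_w(r))$ as an explicit power series in $r$ whose successive coefficients encode invariants of $u^-$ and, through \eqref{eq:ricatti}, the Ricci curvature at $p$. Combined with the radial ODE $\psi''(r) + (\Theta'(r)/\Theta(r))\psi'(r) = 2$ that $\phi$ must satisfy in geodesic polar coordinates, and the polynomial growth constraint on $\Theta(r)$, the finite-dimensional structure of $\cF$ rigidifies $\phi$ sufficiently to force $\Ric(w,w) = 0$ for every unit $w$.

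The main obstacle will be organizing this compatibility into a genuine pointwise constraint rather than just a weak averaged identity; controlling the higher-order Taylor coefficients against the asymptotics of $\Theta'/\Theta$ under polynomial growth is the technical heart of the argument. Once Ricci flatness is in hand, the Riccati equation $(u^-)' + (u^-)^2 + R = 0$ combined with $\tr(u^-) = h = 0$ gives $\tr((u^-)^2) = -\Ric = 0$ along every geodesic; symmetry of $u^-$ then forces $u^- \equiv 0$, and substituting back yields $R \equiv 0$, so all sectional curvatures vanish and $M$ is flat.
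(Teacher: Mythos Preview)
Your reduction to $h=0$ and the final step (Ricci flat $\Rightarrow$ flat via the Riccati equation) match the paper's sketch exactly, and your identification of the finite-dimensionality of $W=\operatorname{span}\{b_v^2\}$ as the key input is correct.

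The gap is in the middle, and you have flagged it yourself. Your proposed mechanism --- average $b_v^2$ over $S_pM$ to a single function $\phi$, Taylor-expand along geodesics, and combine with the radial ODE $\psi'' + (\Theta'/\Theta)\psi' = 2$ --- is not the route \cite{RS.02b} takes, and as stated it does not obviously close. A single averaged $\phi$ satisfying $\Delta\phi=2$ on a harmonic manifold is automatically radial from $p$ and is determined by the ODE and initial data; the finite-dimensionality of $\cF$ says nothing further about this one function, so there is no ``rigidification'' to extract. The information you need is spread across the whole family $\{b_v^2\}$, not its spherical average.

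The actual argument in \cite{RS.02b}, as the paper's sketch records, is more structured and quite different in flavor. One restricts the averaged objects to each geodesic $\gamma$, obtaining a \emph{parallel-displaced family} $g_\gamma:\R\to\R$; finite-dimensionality of $W$ forces $\operatorname{span}\{g_\gamma\}$ to be finite-dimensional, and hence the generator $g$ is a \emph{trigonometric polynomial}. A second family $\mu_\gamma$ of radial functions (built from the harmonic analogue of $r\cos\theta$) is introduced and related to the first. Both $g$ and $\mu$ are \emph{almost periodic}, and the characteristic property of almost periodic functions is what finally pins down Ricci flatness. None of this machinery --- parallel displacement along geodesics, trigonometric-polynomial generators, almost periodicity --- appears in your outline, and it is precisely this machinery that converts finite-dimensionality into a pointwise curvature constraint.
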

\begin{proof}      
The original proof of  Theorem \ref{flat1} given in \cite{RS.02b}  is based 
 on  the idea of the Szabo's proof of the Lichnerowicz's conjecture  in compact case.
 In the proof,  ${b_v}^2$ was averaged  (idea which can be employed only for harmonic manifolds),
  and a parallel displaced family, $g_{\gamma}$, of real valued functions on $\R$, for every geodesic $\gamma$  was  obtained.
 As  span$\{b_v ^{2}\}$  is a finite dimensional vector space  (Corollary  \ref{est}), 
 span $\{g_{\gamma} \}$  is also a finite dimensional vector space  and  therefore, the generator function $g$ is a
  trigonometric polynomial.  By using properties of $g$, $g$ was  written in the simpler form. 
  Then another family of radial functions ${\mu}_{\gamma}$  was introduced. The  generator function, $\mu$  was
   obtained by generalizing co-ordinate function  $r \cos \theta$ on a harmonic manifold. 
   Then using properties of $\mu$,  the two families were  related. It  was observed that $g$ and $\mu$ are almost periodic functions. 
   Finally, using the Characteristic Property of an almost periodic function, it  was proved that $M$ is Ricci flat  and  hence  flat.
 \end{proof}

\indent Conclusion : 
\begin{itemize}
\item[1)] Note  that above sketch of Theorem  \ref{sketch} of  \cite{RS.02b} shows  that
the  flatness  of  HM  was  proven by  using finite  dimensionality of, $W = span b_{v}^2$ completely.
\item[2)] The  sketch  also  shows  that  the  proof uses  harmonicity  of  
$M$  heavily,  while  the  proof obtained in this  paper  for  AHM  is wider  and  strongest in comparison. 
\item[3)] Also  from  Corollary \ref{est},  it  is  not  clear  whether  dim  $V  \leq  n$. In fact, the  bound  $3^{(2+n)} >>> n$  and  dimension estimate  of  \cite{LW.99}
is  not  optimal, in this  case.
\end{itemize}

\section{Consequences  of  Flatness of  AHM}

In  this  section,  we  describe  importantance  
 of  our  main  result  that  an AHM   is  flat.   We  need  the  following  definitions.

\begin{Def}\label{ncom}
 If  $(M,g)$  is a connected,  non-compact  Riemannian manifold,  then  the volume  entropy  
 $h_{vol}(M)$  of  $M$  is  defined as  :
 \begin{eqnarray}\label{ve}
h_{vol} (M,g)  =  \lim_{R  \rightarrow  \infty} \sup \frac{\log (\Vol(B_{p}(R)))}{R},
\end{eqnarray}
 \end{Def}
where  $B_{r}(p)  \subset  M$ is the open ball of radius $R$ around $p \in  M$.\\
 
 \noindent
 Note  that  (\ref{ve})  doesn't  depend  on the choice  of  reference  point  $p$
 and  $h_{vol} (M,g)$  is  therefore well  defined.

 \begin{Cor}\label{gtype1}
If $(M,g)$  is an  AH manifold  of  constant  $h \geq 0$,
then  $M$  has either  polynomial  volume  growth  or  exponential  volume growth.
\end{Cor}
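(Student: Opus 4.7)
The plan is to split the argument on the value of the constant $h$. If $h=0$, this is precisely the AHM case, and Theorem \ref{f} delivers the conclusion at once: $F\colon M\to \R^n$ is an isometry, so $\Vol(B(p,r)) = c_n r^n$ is polynomial. The content of Corollary \ref{gtype1} in this case is just the observation that polynomial growth is one of the two alternatives listed.

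The remaining case $h>0$ I would handle via the interpretation of $h$ as an asymptotic mean curvature of geodesic spheres. Formula \eqref{hor} from the preliminaries states
$$\frac{\Theta'_p(v,r)}{\Theta_p(v,r)} = h_p(\exp_p(rv)),$$
while the AH defining property gives
$$\lim_{r\to\infty}\frac{\Theta'_p(v,r)}{\Theta_p(v,r)} = \Delta b_v^{\pm} = h, \qquad \forall\, v\in S_pM.$$
Fix any $\ep\in (0,h)$. For each $v\in S_pM$ there exists $R_v>0$ with $\Theta'_p(v,r)/\Theta_p(v,r) > h-\ep$ whenever $r>R_v$. Integrating the log-derivative from $R_v$ to $r$ yields $\Theta_p(v,r) \geq C_v\, e^{(h-\ep)r}$ for $r>R_v$. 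Now pick a single direction $v_0$, and use continuity of $\Theta_p(\cdot,\cdot)$ jointly in $v$ and $r$ on the compact set $\{v_0\}\times[R_{v_0},R_{v_0}+1]$ to produce a neighborhood $U\subset S_pM$ of $v_0$ on which uniform constants $R,C>0$ can be chosen. Integrating in polar coordinates,
$$\Vol(B(p,r)) \ \geq\ \int_{R}^{r}\!\!\int_{U} \Theta_p(u,s)\,d\sigma(u)\,ds \ \geq\ C\cdot\operatorname{Area}(U)\cdot \frac{e^{(h-\ep)r}-e^{(h-\ep)R}}{h-\ep}$$
for $r>R$. Choosing $c:=e^{(h-\ep)/2}>1$ gives $\Vol(B(p,r))\geq c^r$ for all large $r$, which is exactly the exponential volume growth condition (i) of the trichotomy in Section 3.1.

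The main obstacle is the second case: the convergence $\Theta'_p(v,r)/\Theta_p(v,r)\to h$ is a priori only pointwise in $v$, whereas the volume integral ranges over all of $S_pM$. The saving observation is that we only need \emph{exponential} growth, not the sharp exponent, so it suffices to control $\Theta_p$ on a single small angular sector; the continuity of $\Theta_p$ in the direction variable, combined with compactness on a finite interval of $r$, supplies this uniformity on the neighborhood $U$ chosen above. The argument automatically recovers Niblo's Corollary \ref{growth} for harmonic manifolds, except that the $h=0$ branch is now delivered by the first order flatness Theorem \ref{f} instead of by \cite{RS.02b}.
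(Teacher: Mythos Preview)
Your argument follows the paper's exactly: split on the value of $h$, invoke Theorem~\ref{f} when $h=0$, and extract exponential growth from $\Theta'_p(v,r)/\Theta_p(v,r)\to h$ when $h>0$. The paper's own proof of the $h>0$ case is a single sentence and does not even raise the pointwise-versus-uniform issue you flag, so in that respect you are already being more careful than the source.

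Your uniformity step has a small gap, however. Joint continuity of $\Theta_p$ on the compact slab $\{v_0\}\times[R_{v_0},R_{v_0}+1]$ only controls $\Theta_p(v,r)$ for $(v,r)$ \emph{near} that slab; it says nothing about $v\in U$ and $r\gg R_{v_0}+1$, which is exactly the regime where the exponential lower bound is needed. Two clean repairs are available. First, apply Egorov's theorem to the pointwise-convergent family $v\mapsto\Theta'_p(v,r)/\Theta_p(v,r)$ on the finite-measure space $S_pM$ to obtain a positive-measure set $E$ on which the convergence to $h$ is uniform; then a single $R$ works for all $v\in E$, and $\Theta_p(\cdot,R)$ is bounded below on the compact $S_pM$, giving the uniform exponential bound on $E$. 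Second, and more in the spirit of the paper, the Riccati comparison between the geodesic-sphere solution and the unstable horosphere solution $u^+$ along each ray (both satisfy \eqref{eq:ricatti}, and the sphere solution blows up at $r=0^+$) gives $\Theta'_p(v,r)/\Theta_p(v,r)=\tr u_{\mathrm{sph}}(r)\geq\tr u^+=h$ for every $r>0$ and every $v\in S_pM$, so the inequality is global and no limiting or uniformity argument is needed at all.
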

\begin{proof}
 $(M,g)$  be an  AH manifold  of  constant  $h \geq 0$.
If  $h =0$,  then  from our main Theorem  \ref{f},  M  is  flat  and hence of  polynomial  volume  growth.
If  $h > 0$,  then  as   $\displaystyle \lim_{r  \rightarrow  \infty}   \frac{ \Theta_{q}'(r, v)}{\Theta_{q}(r, v)}  = h  > 0,$
for  any  $q \in  M$ and $v \in S_{q}M$, volume  growth  is  exponential.
\end{proof}
\noindent
Thus, we  have  generalized   the  result  of  \cite{N.05},  Corollary  $1.4$  stated in $\S 1$: \\

\noindent
Corollary 1.4: If $(M,g)$  is a harmonic  manifold  of  constant  $h \geq 0$,
then  $M$  has either  polynomial  volume  growth  or  exponential  volume growth. \\

\noindent
We  have  obtained:

\begin{Thm}\label{growth}
If $(M,g)$  is an  AH manifold  of  constant  $h \geq 0$,
then  the following propeties  are equivalent:
\begin{itemize}
\item[1)]  $M$  has  subexponential  volume  growth.
\item[2)] $h = 0$.
\item[3)]  $M$  is  flat.
\item[4)]  $M$  is  of  polynomial  volume  growth.
\item[5)]  Volume  entropy of  $M$,  $h_{vol}(M) =  0= h $.
\item[6)]  $M$ has  rank  $n$.
\end{itemize}
\end{Thm}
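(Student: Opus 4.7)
The plan is to establish the six equivalences through a cyclic chain $(1) \Rightarrow (2) \Rightarrow (3) \Rightarrow (4) \Rightarrow (1)$, together with the ancillary equivalences $(2) \Leftrightarrow (5)$ and $(3) \Leftrightarrow (6)$. The engine of the whole argument is Theorem \ref{f}, which furnishes the crucial bridge $h = 0 \Rightarrow M$ flat; everything else is either elementary or follows from the AH identification of $h$ with the mean curvature of horospheres.

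The easy directions are quickly dispatched. The step $(2) \Rightarrow (3)$ is exactly Theorem \ref{f}. For $(3) \Rightarrow (4)$, a complete simply connected flat Riemannian manifold is isometric to $\R^n$, whose ball volumes $c_n R^n$ are polynomial. And $(4) \Rightarrow (1)$ is immediate from the definition of subexponential growth.

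The decisive step is $(1) \Rightarrow (2)$, and this is where I expect the main obstacle. Here I would exploit the identity
\begin{equation*}
\lim_{r \to \infty} \frac{\Theta_p'(v,r)}{\Theta_p(v,r)} = h, \quad \forall v \in S_pM,
\end{equation*}
coming from \eqref{hor} and the AH hypothesis. Suppose for contradiction that $h > 0$ and fix $\ep \in (0,h)$. For any chosen $v_0 \in S_pM$ there exists $R_0$ with $\Theta_p'(v_0,r)/\Theta_p(v_0,r) \geq h - \ep$ for $r \geq R_0$; integrating yields $\Theta_p(v_0,r) \geq C_0 e^{(h-\ep)r}$ for $r \geq R_0$. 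By continuity of $\Theta_p$ on $S_pM \times [R_0, \infty)$, the same lower bound (with a smaller constant) holds uniformly for $v$ in some open neighborhood $U \subset S_pM$ of $v_0$, and polar integration then gives
\begin{equation*}
\Vol(B_p(R)) \geq \int_{R_0}^{R} \int_U \Theta_p(v,r) \, dv \, dr \geq C e^{(h-\ep) R}
\end{equation*}
for some $C > 0$ and all large $R$, contradicting $\lim_{R \to \infty} \log \Vol(B_p(R))/R = 0$. Hence $h = 0$. The only subtlety here is extracting just enough uniformity in $v$ that the radial exponential growth of $\Theta_p$ along one direction propagates to the full ball volume.

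The ancillary equivalences then follow mechanically. For $(2) \Leftrightarrow (5)$: $(5)$ trivially implies $(2)$, while from $(2)$ Theorem \ref{f} gives $M \cong \R^n$, whence $h_{vol}(M) = \limsup_{R\to\infty} \log(c_n R^n)/R = 0$. For $(3) \Leftrightarrow (6)$: if $M$ is flat then $M \cong \R^n$ and the $n$ coordinate directions yield $n$ linearly independent parallel Jacobi fields along every geodesic, so $\rank M = n$; conversely, rank $n$ means every geodesic carries $n$ linearly independent parallel Jacobi fields $J_1,\ldots,J_n$, each satisfying $J_i'' + R(J_i,\gamma')\gamma' = 0$ with $J_i' \equiv 0$, so $R(\cdot,\gamma')\gamma'$ annihilates a basis of $T_{\gamma(t)}M$ and vanishes identically; hence the full curvature tensor is zero and $M$ is flat. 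The genuine hard point in the whole theorem is $(1) \Rightarrow (2)$, i.e.\ the identification $h_{vol}(M) = h$ for AH manifolds; all remaining implications are either Theorem \ref{f} or standard Riemannian geometry.
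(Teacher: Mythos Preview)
Your proposal is correct and matches the paper's approach: the paper does not write out a separate proof of this theorem but presents it as a summary (``We have obtained:'') of Corollary~\ref{gtype1} together with Theorem~\ref{f}, and your cyclic chain $(1)\Rightarrow(2)\Rightarrow(3)\Rightarrow(4)\Rightarrow(1)$ plus the two side equivalences is precisely the intended unpacking. One small caution on your $(1)\Rightarrow(2)$ step: invoking ``continuity of $\Theta_p$ on $S_pM\times[R_0,\infty)$'' does not by itself yield the uniform lower bound $\Theta_p(v,r)\geq C e^{(h-\ep)r}$ on a neighbourhood of $v_0$ for \emph{all} large $r$, since the domain is non-compact in $r$; a cleaner route is to note that $\log\Theta_p(v,r)/r\to h$ for every $v$ and then apply Fatou's lemma to $e^{-(h-\ep)r}\Theta_p(v,r)$ over $S_pM$, or to appeal directly to the uniform convergence of sphere mean curvatures to $h$ in the AH setting --- but this is a minor technical patch, and the paper itself is no more careful at this point.
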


In  \cite{ISS.14}  among AH Hadamard manifolds, a rigidity theorem
 with respect to volume entropy for  real, complex and  quaternionic hyperbolic   spaces were obtained.  
In particular, the  following  result was  proved:

\begin{Thm}\label{ent}
 $(M,g)$ be an $n$-dimensional Hadamard manifold of Ricci curvature Ricci $\geq  -(n-1)$. If
$(M,g)$ is AH, then  $h_{vol}  =  (n-1)$  and equality holds if and only if
 $(M,g)$ is isometric to the real hyperbolic space ${\Hy}^n$ of constant curvature $-1$.
\end{Thm}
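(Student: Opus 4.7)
The plan is to combine three ingredients: (i) the identification $h_{\text{vol}}(M)=h$ for any AH manifold, (ii) a pointwise Cauchy--Schwarz estimate on the unstable Riccati solution $u^{+}$, and (iii) a rigidity analysis of the equality case via the Jacobi operator. I will interpret the statement as the sharp bound $h_{\text{vol}}\le n-1$, with equality characterizing $\Hy^{n}$.

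First I would establish $h_{\text{vol}}(M)=h$. Writing the volume in polar coordinates,
\[
\Vol(B_{p}(R))=\int_{0}^{R}\!\!\int_{S_{p}M}\Theta_{p}(r,u)\,du\,dr,
\]
and recalling from the preliminaries that $\Theta_{p}'/\Theta_{p}\to h$ as $r\to\infty$ uniformly in $u$, a standard integration shows that $\tfrac{1}{R}\log\Vol(B_{p}(R))\to h$. Thus $h_{\text{vol}}(M)=h$, reducing the problem to bounding the horospherical mean curvature $h$ by $n-1$.

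Second, I would run the trace Riccati argument. From \eqref{eq:ricatti} and constancy of $h=\tr u^{+}(t)$ along geodesics, one has
\[
\tr\bigl(u^{+}(t)\bigr)^{2}=-\Ric\bigl(\gamma_{v}'(t),\gamma_{v}'(t)\bigr).
\]
Since $u^{+}(t)$ is a self-adjoint endomorphism on an $(n-1)$-dimensional space, Cauchy--Schwarz gives
\[
\tr(u^{+})^{2}\ \ge\ \frac{(\tr u^{+})^{2}}{n-1}\ =\ \frac{h^{2}}{n-1}.
\]
Combining with the hypothesis $\Ric\ge-(n-1)$ yields $h^{2}/(n-1)\le n-1$, so $h\le n-1$, and step (i) upgrades this to $h_{\text{vol}}\le n-1$.

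Third, for the equality case: $h_{\text{vol}}=n-1$ forces equality in Cauchy--Schwarz at every point and in every direction, which means $u^{+}(t)=\tfrac{h}{n-1}\,\Id=\Id$ on $\gamma_{v}'(t)^{\perp}$ for every $v\in SM$ and every $t$. Plugging this into the Riccati equation \eqref{eq:ricatti} gives $R(t)=-(u^{+})^{2}-(u^{+})'=-\Id$, i.e.\ the Jacobi operator is $-\Id$ along every geodesic. This means $M$ has constant sectional curvature $-1$; since $M$ is a Hadamard manifold, the Cartan--Hadamard theorem identifies $M$ isometrically with $\Hy^{n}$.

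The only delicate step is the first one: one must argue that the pointwise convergence $\Theta_{p}'/\Theta_{p}\to h$ truly governs the exponential growth rate of the double integral defining $\Vol(B_{p}(R))$. For AH Hadamard manifolds this is clean because $\Theta_{p}(r,u)$ is monotone in $r$ with a well-defined asymptotic logarithmic derivative; a Cesaro-type argument turns pointwise asymptotics into the limit $\lim R^{-1}\log\Vol(B_{p}(R))=h$. Once this identification is in hand, steps two and three are purely algebraic consequences of the Riccati equation and Cauchy--Schwarz, so the rigidity half of the theorem follows at once.
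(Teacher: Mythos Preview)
The paper does not prove this theorem; it is quoted from \cite{ISS.14} as a result complementary to the paper's own Theorem~\ref{growth}, with no argument supplied. So there is no in-paper proof to compare against. Your reading of the conclusion as the inequality $h_{\mathrm{vol}}\le n-1$ is also the correct one: the printed ``$=$'' is evidently a slip, since otherwise the subsequent equality clause would be vacuous.

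Your argument is correct and is the natural one. Step~(ii) is precisely the computation carried out in Proposition~\ref{neg-r}, namely $\Ric(v,v)\le -h^{2}/(n-1)$, now read against the hypothesis $\Ric\ge-(n-1)$ to yield $h\le n-1$. Step~(iii) is the standard rigidity: equality in Cauchy--Schwarz forces $u^{+}=\Id$ on $v^{\perp}$ for every $v\in SM$, and then \eqref{eq:ricatti} gives $R(t)=-\Id$, so $M$ has constant sectional curvature $-1$. This is exactly the totally-umbilical-horosphere observation recorded in the paper's introduction.

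The only point needing care is step~(i), as you yourself flag. The uniformity in $u\in S_{p}M$ that you invoke does use the Hadamard hypothesis: in nonpositive curvature the second fundamental forms of expanding geodesic spheres decrease monotonically to the horospherical operator $u^{+}$, so Dini's theorem on the compact fibre $S_{p}M$ upgrades the pointwise convergence $\Theta_{p}'/\Theta_{p}\to h$ to uniform convergence, after which your Cesaro passage to $R^{-1}\log\Vol(B_{p}(R))\to h$ is routine. Alternatively, the identity $h_{\mathrm{vol}}=h$ for AH manifolds is established directly in \cite{KP.05}, and in the compact-quotient setting in \cite{Z.12} (quoted here as Proposition~\ref{equ}); either reference closes the gap without further work.
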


 The  following  result  was  proved  in  \cite{KP.05}:

\begin{Thm}\label{expo}
Let  $(M,g)$  be an  AH manifold  of  constant  $h \geq 0$, such  that  
$||R||  \leq  R_{0}$   and  $||\nabla R||  \leq  R_{0}'$  with  suitable  constants
$R_{0},  R_{0}' > 0$.  Then  the following propeties  are equivalent:
\begin{itemize}
\item[1)]  $M$ has  rank  $1$.
\item[2)]  $M$  has  Anosov  geodesic  flow  ${\phi}^t   :  SM  \rightarrow  SM$.
\item[3)]  $M$  is  Gromov hyperbolic.
\item[4)]  $M$  has  purely  exponential  volume  growth  with  growth  rate\\ $h_{vol}  = h > 0$.
\end{itemize}
\end{Thm}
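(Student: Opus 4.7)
The plan is to establish the four equivalences via the cycle $(1) \Rightarrow (2) \Rightarrow (3) \Rightarrow (4) \Rightarrow (1)$, using the Riccati equation \eqref{eq:ricatti} as the main analytic tool. The assumed bounds $\|R\| \le R_0$ and $\|\nabla R\| \le R_0'$ provide uniform control on the stable and unstable Riccati solutions $u^\pm(t)$ along every geodesic, in particular ensuring that $u^\pm$ depend continuously on the base point $v \in SM$ and that $\phi^t$ has bounded differential in any invariant frame. Throughout, I will exploit that on an AH manifold $\tr u^+ \equiv h$, so the deviation from pure hyperbolic behaviour is measured by how far $u^+$ is from the scalar $(h/(n-1))\Id$.

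For $(1) \Rightarrow (2)$, I would interpret rank $1$ as the existence of some $v_0 \in SM$ such that no perpendicular parallel Jacobi field exists along $\gamma_{v_0}$; equivalently, $u^+(v_0)$ has trivial kernel. The Riccati equation, combined with the curvature bounds, propagates this property to an open $\phi^t$-invariant set of $SM$, and an averaging argument on the Riccati flow shows that $u^+$ is uniformly positive definite on this set. The stable and unstable bundles for $\phi^t$ are then the graphs of $u^\pm$ in the Jacobi identification $T_v SM \cong \gamma_v'(0)^\perp \oplus \gamma_v'(0)^\perp$, and integrating the Riccati equation yields the exponential contraction/expansion required for the Anosov property.

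For $(2) \Rightarrow (3)$, the Anosov property rules out flat strips (no non-trivial central bounded Jacobi field), so Corollary \ref{bi-asy} forces the visibility-type conclusion that distinct asymptote classes have diverging geodesics. The bounded-geometry hypothesis converts this into the Gromov thin-triangles condition via a standard Morse-lemma argument. For $(3) \Rightarrow (4)$, Gromov hyperbolicity together with bounded geometry yields a shadow lemma and Patterson--Sullivan-style counting, giving $V(p,r) \asymp e^{h_{vol}(M) r}$; on an AH manifold one has $h_{vol}(M) = h$ directly from the formula $\Theta_p'(v,r)/\Theta_p(v,r) \to h$ and Theorem \ref{ent}-type arguments.

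The main obstacle, as usual in this circle of ideas, is $(4) \Rightarrow (1)$: deducing a rigid geometric invariant (rank) from a soft coarse invariant (purely exponential volume growth). My plan is the contrapositive. If the rank is $\ge 2$ there exists a geodesic $\gamma$ with a perpendicular parallel Jacobi field, hence by Corollary \ref{bi-asy} (adapted) a $2$-dimensional totally geodesic strip of bi-asymptotic geodesics. Along this strip, $u^+$ has a zero eigenvalue, so the trace constraint $\tr u^+ = h$ forces its other eigenvalues to exceed $h/(n-1)$; integrating the inequality $\Theta_p'/\Theta_p \le h - \varepsilon$ in the transverse direction then shows $V(p,r) \le c\,r \cdot e^{(h-\varepsilon)r}$ in a cone, contradicting purely exponential growth of rate exactly $h$. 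Making the use of the hypotheses $\|R\|,\|\nabla R\|$ bounded rigorous at this step — to ensure that the Riccati analysis is uniform and that the flat strip genuinely persists — is where most of the technical work will go.
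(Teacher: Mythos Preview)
The paper does not prove this theorem. It is quoted verbatim from \cite{KP.05} (Knieper--Peyerimhoff), introduced by the sentence ``The following result was proved in \cite{KP.05}'', and no proof follows; it is used only as a point of comparison with the paper's own Theorem~\ref{growth}. So there is no ``paper's own proof'' against which to compare your proposal.

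That said, your sketch of $(4)\Rightarrow(1)$ contains a genuine error of direction. You argue that if $u^+$ has a zero eigenvalue along a flat strip, then the trace constraint $\tr u^+ = h$ forces the remaining eigenvalues to exceed $h/(n-1)$, and from this you claim $\Theta_p'/\Theta_p \le h-\varepsilon$ in the transverse directions. But the remaining eigenvalues being \emph{larger} than $h/(n-1)$ would, if anything, make the growth in those directions \emph{faster}, not slower; and in any case the total trace is still exactly $h$, so $\Theta_p'/\Theta_p \to h$ pointwise regardless. The correct heuristic is the opposite: a parallel Jacobi field contributes a linear (polynomial) factor to the volume, so one expects $V(p,r)$ to behave like $r\,e^{hr}$ rather than $C\,e^{hr}$, violating the ``purely'' in purely exponential growth. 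Turning that heuristic into a proof is nontrivial and is one of the substantial contributions of \cite{KP.05}; your sketch does not yet capture it. The other implications in your cycle are broadly in the right spirit, though $(1)\Rightarrow(2)$ in this non-cocompact setting (with only the curvature bounds as a substitute for compactness) is also where \cite{KP.05} does real work, and your ``averaging argument on the Riccati flow'' is too vague to assess.
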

 
 \begin{Rem}
\begin{itemize}
\item[1)] Our Theorem  \ref{growth} is complementary  to  Theorem  \ref{ent}   and  Theorem \ref{expo}. 
\item[2)] In  view  of  Corollary  \ref{gtype1}   (Corollary  1.4),  harmonic  manifolds  and AH  manifolds share  the common property  viz.  that  they  have  the  same volume  growth. This  is  an important observation towards proving  that  class  of  AH  and  harmonic manifolds coincide.
\end{itemize}
\end{Rem}

\subsection{Final conclusion}
We showed in $\S 3$ that the Strong Liouville Type Property holds for more general
type of spaces viz., for non-compact, complete, connected manifold of infinite injectivity radius and  subexponential volume growth. Similarly, the results in the other sections of this paper, like the existence of Killing vector fields on AHM, can be
proved in the more generality. Hence, we can conclude first order flatness even for these general type of spaces.\\

\noindent
{Theorem $1.6$ :}\\
If $(M,g)$ is a non-compact, complete, connected manifold of infinite injectivity radius and  subexponential volume growth, then $(M,g)$ is first order flat.

 \section{Appendix \\  Asymptotically  Harmonic  Manifolds  With  Minimal  Horospheres  Admitting Compact  Quotient}  
 
 In  this  Appendix,  we  prove:
 \begin{Thm}\label{co}
 If  $(M,g)$  is  an an  AHM, admitting  compact quotient,  then  $M$  is  flat  
 \end{Thm}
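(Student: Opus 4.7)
The plan is to combine the existence of a non-trivial parallel vector field on any AHM (Section~4) with an iterated de~Rham splitting, using the compact quotient to carry the induction through.

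By Corollary~\ref{non-trivial}, $M$ admits a non-trivial Killing vector field $X$; Theorem~\ref{const} shows $X$ is parallel, and after normalizing $\|X\|\equiv 1$, Theorem~\ref{bus-par} identifies $X=\nabla b_{v_0}$ for some $v_0\in SM$. Since $M$ is simply connected and complete and $X$ is parallel, the de~Rham theorem yields an isometric splitting $M=\mathbb{R}\times M_1$, with $M_1=b_{v_0}^{-1}(0)$ simply connected and complete. For any $v\in SM_1$, the inner product $\langle X,\nabla b_v\rangle$ is parallel and hence constant on $M$, and vanishes at the base point, so it vanishes identically; consequently $b_v$ is invariant under the flow of $X$ and descends to $M_1$, where it still satisfies $\Delta_{M_1}b_v=0$ and $\|\nabla b_v\|\equiv 1$. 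Hence $M_1$ is itself an AHM.

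By standard results on cocompact isometric actions on Riemannian product spaces, a finite-index subgroup $\Gamma'\subset\Gamma$ preserves the factorization $M=\mathbb{R}\times M_1$, and its projection to $\operatorname{Iso}(M_1)$ is cocompact. Therefore $M_1$ is an AHM of dimension $n-1$ which also admits a compact quotient. Induction on $\dim M$, with the trivial base case $n=1$, then yields that $M_1$ is flat, and so $M=\mathbb{R}\times M_1$ is flat.

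The principal technical obstacle is the equivariance of the de~Rham splitting: while $M$ factors canonically once the parallel line is fixed, $\Gamma$ need not preserve this factorization on the nose, and one must pass to a finite-index subgroup whose projection to $\operatorname{Iso}(M_1)$ remains cocompact. This is the step in which the compact-quotient hypothesis is essential and needs the most care. A conceptually cleaner variant is to pass at once to the full Euclidean de~Rham factor $M=\mathbb{R}^k\times M_0$ with $M_0$ having no $\mathbb{R}$-factor, verify that $M_0$ inherits the AHM structure, and observe that if $\dim M_0>0$ then Corollary~\ref{non-trivial} applied to $M_0$ would produce a parallel direction inside $M_0$, contradicting the absence of an $\mathbb{R}$-factor; this forces $k=n$ and $M=\mathbb{R}^n$.
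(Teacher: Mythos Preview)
Your argument is essentially correct, but it takes a route entirely different from the paper's and, in fact, does not really use the compact quotient hypothesis at all. The paper's Appendix proof is meant to be \emph{independent} of Sections~3--5: it invokes Proposition~\ref{equ} (from \cite{Z.12}) to identify $h_{vol}=h=0$, so $\tilde M$ has subexponential volume growth and hence $\pi_1(M)$ does; Tits' Theorem then forces polynomial growth, and Lebedeva's result \cite{L.05} on compact manifolds without conjugate points with polynomial-growth fundamental group gives flatness. None of the Liouville-type analysis or the Killing-field machinery enters.

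By contrast, your proof rests squarely on Corollary~\ref{non-trivial} and Theorem~\ref{bus-par}, i.e.\ on the core of Sections~3--5. Once those are granted, your ``cleaner variant'' already proves the full Theorem~\ref{f} without any cocompactness: split $M=\mathbb{R}^k\times M_0$ by de~Rham, check that $M_0$ inherits the AHM property, and note that a parallel field on $M_0$ (furnished again by Corollary~\ref{non-trivial}) contradicts irreducibility unless $M_0$ is a point. So your argument is not so much an alternative proof of Theorem~\ref{co} as a repackaging of the main theorem via de~Rham; the inductive version with the compact quotient is actually the weaker of your two variants, and its equivariance step (passing to a finite-index subgroup with cocompact projection to $\operatorname{Iso}(M_1)$) is neither needed nor entirely routine in this generality. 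What the paper's approach buys is an argument that is logically disjoint from the main body and genuinely exploits cocompactness through group-growth considerations; what yours buys is a conceptually clean reformulation of Theorem~\ref{f} itself.
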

 \noindent
Note  that:\\
 (1) Theorem  \ref{co}  should be known, but the author couldn't 
find  the reference,where  it is  explicitly proved and  hence she includes it  here. \\
(2)  In  view  of  Theorem  \ref{co},  the present  paper  is  devoted  to  proving flatness  of  non-compact  AHM.\\

  We  recall  here  definition  of  complete   AH manifold,  that  of volume
 growth  and volume  growth  entropy. 
  
 \begin{Def}
  \begin{itemize}
 \noindent
 \item[1)] $(M,g)$  be  a  complete Riemannian  manifold without  conjugate points  and $(\tilde M, g)$  be  its   universal  Riemannian  covering
 space.  Then  $(M,g)$  is  called  AH,  if  there exists a  constant  $h \geq 0$  such that  the  mean curvature of
 all  horospheres in  $\tilde M$ is  $h$.  
 \item[2)]  Let  $(M,g)$ and  $(\tilde M, g)$  be as in 1)  and
  choose  $p  \in  \tilde M$.  Then the  volume  entropy  (or  asymptotic  volume  growth)  $h_{v} = h_{v} (M,g)$  
 is  defined  as  :
 $$ h_{v} = h_{v} (M,g)  =  \lim_{R  \rightarrow  \infty}  \frac{\log (\Vol_{\tilde M} (B_{p}(R)))}{R}.$$      
 \end{itemize}
 \end{Def} 
 Manning  \cite{M.79}  showed that the limit above always exists and is independent of  $p \in  \tilde M$. \\
 \noindent
 In  \cite{Z.12}  the  following  Proposition was proved. 
 
 \begin{Pro}\label{foc}
If $(M,g)$ is a compact AHM  without  focal points, then $M$  is  flat.
 \end{Pro}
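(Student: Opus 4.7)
}
The plan is to pass to the universal cover and show that the combination of the asymptotic harmonicity condition $h=0$ together with the no-focal-points hypothesis forces the unstable Riccati solution to vanish identically, after which the Riccati equation itself delivers flatness. Let $\pi:\tilde M\to M$ be the Riemannian universal covering. Since $M$ is compact without focal points, $\tilde M$ is complete, simply connected and without conjugate points, and the no-focal-points condition lifts. The AHM structure also lifts: every horosphere of $\tilde M$ has mean curvature $h=0$, so all Busemann functions $b_v^\pm$ on $\tilde M$ are well defined, $C^2$, and satisfy $\Delta b_v^\pm\equiv 0$; equivalently, the endomorphism fields $u^\pm$ defined in \S2 along every orbit of the geodesic flow satisfy $\tr u^+(t)=\tr u^-(t)=0$ and the Riccati equation \eqref{eq:ricatti}.

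The crucial input from the no-focal-points hypothesis is the classical convexity of horospheres: for any $v\in S\tilde M$, the Hessian of the Busemann function $b_v^-$ restricted to $v^\perp$ is positive semi-definite, and the Hessian of $b_v^+$ is positive semi-definite as well. In the notation of \S2 this says exactly $u^+(v)\ge 0$ (and $u^-(v)\le 0$) as symmetric operators on $v^\perp$. Now a positive semi-definite symmetric operator whose trace vanishes must itself vanish; combining $u^+\ge 0$ with $\tr u^+=h=0$ yields $u^+(v)\equiv 0$ for every $v\in S\tilde M$, and consequently $(u^+)'(t)\equiv 0$ along every geodesic. Feeding this back into the Riccati equation $(u^+)'(t)+(u^+)^2(t)+R(t)=0$ produces $R(t)\equiv 0$ along every geodesic of $\tilde M$. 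Since $R(t)=R(\cdot,\gamma_v'(t))\gamma_v'(t)$, this means the sectional curvature of every plane tangent to $\tilde M$ vanishes, so $\tilde M$, and hence the quotient $M$, is flat.

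The main obstacle is the convexity statement $u^+\ge 0$ under the no-focal-points hypothesis, which is the one place the proof uses that hypothesis in an essential way. I would establish it by the standard stable/unstable Jacobi tensor construction: take the sequence of Jacobi tensors $A_s(t)$ along $\gamma_v$ with $A_s(0)=I$ and $A_s(s)=0$, and pass to the limit $s\to\infty$ to obtain the stable tensor $A^-_v(t)$; the absence of focal points guarantees that each $A_s$ (and then $A^-_v$) is invertible for all $t$ and that $\tfrac{d}{dt}\|A_s(t)\xi\|^2$ has the correct sign, so that the Riccati solutions $U^\pm(t):=(A^\pm)'(t)(A^\pm)^{-1}(t)$ are semi-definite of the claimed sign. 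This is the content of the classical Eschenburg--O'Sullivan theory of no-focal-points manifolds; once it is granted, the rest of the argument is a two-line consequence. As a parallel alternative avoiding a direct invocation of Riccati at the end, one could instead note that $u^+\equiv 0$ says each $\nabla b_v^-$ is a parallel unit vector field on $\tilde M$, and then apply Theorem \ref{bus} to the span of the Busemann functions to conclude that $\tilde M$ is isometric to $\mathbb R^n$.
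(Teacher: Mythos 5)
Your argument is correct, but it follows a genuinely different route from the paper's treatment. The paper does not prove Proposition~\ref{foc} at all: it quotes it from \cite{Z.12} and then proves the stronger Theorem~\ref{co1}, which drops the no-focal-points hypothesis entirely. That proof is global and group-theoretic: $\Delta b_v\equiv 0$ on $\tilde M$ forces the volume entropy to vanish (Proposition~\ref{equ}), hence $\pi_1(M)$ has subexponential growth, Tits' alternative upgrades this to polynomial growth, and Lebedeva's theorem \cite{L.05} on compact manifolds without conjugate points with polynomially growing fundamental group gives flatness. Your proof is instead local and infinitesimal: the Eschenburg--O'Sullivan convexity of horospheres in the absence of focal points gives $u^+\ge 0$, the trace condition $\tr u^+=h=0$ forces the semi-definite symmetric operator $u^+$ to vanish, and the Riccati equation \eqref{eq:ricatti} then kills the Jacobi operator $R(t)$ along every geodesic, hence all sectional curvatures. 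Both arguments are sound; yours is shorter, self-contained modulo the classical convexity lemma, and in fact never uses compactness, so it proves the statement for any (not necessarily compact) AHM without focal points --- which is essentially the content of \cite{Z.11}. What it buys less of is generality in the other direction: it genuinely needs the no-focal-points hypothesis to get the sign of $u^+$, whereas the paper's entropy/Tits/Lebedeva route removes that hypothesis in the compact case. Your closing alternative (deducing that $\nabla b_v^-$ is parallel and invoking Theorem~\ref{bus}) is also fine and connects the appendix back to the first-order-flatness machinery of \S 5.
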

 
 We  prove the  above mentioned  result  without  no  focal point  condition,  mainly  by  using  the 
 following  result  of  \cite{Z.12}  and  Tits  Theorem.
 
 \begin{Pro}\label{equ}
 If  $(M,g)$  is a  compact   AH manifold  with  ${\tilde M}$
 as  universal  covering  space  of $M$  and  $\Delta  b_{v}  =  h$, for  all 
 $v \in  S  {\tilde M}$,  then  $h =   h_{v}$. 
  \end{Pro}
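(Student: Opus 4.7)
The plan is to deduce the equality $h = h_v$ from the pointwise formula relating the volume density $\Theta_p(v,r)$ on $\tilde M$ to the mean curvature of geodesic spheres, by passing to asymptotics and invoking uniform estimates that come from the compactness of $M = \tilde M/\Gamma$.

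First I would recall the identity $\tfrac{\Theta_p'(v,r)}{\Theta_p(v,r)} = h_p(\exp_p(rv))$ from \eqref{hor}, where $h_p(q)$ denotes the mean curvature at $q$ of the geodesic sphere of radius $d(p,q)$ around $p$. The AH hypothesis gives the pointwise limit $\lim_{r\to\infty} h_p(\exp_p(rv)) = h$ for every $(p,v) \in S\tilde M$. The first real step is to upgrade this pointwise convergence to uniform convergence on $S\tilde M$. Here compactness of $M$ is essential: the densities $\Theta$, the sphere mean curvatures $h_p(\cdot)$ and the unstable Riccati solutions $u^{+}(t)$ are $\Gamma$-invariant, so their behavior along orbits of the geodesic flow $\phi^t$ on $\tilde M$ is controlled by the behavior on the compact bundle $SM$. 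Using the Riccati equation $(u^{+})'(t) + (u^{+})^2(t) + R(t) = 0$ along $\phi^t$-orbits together with the uniform boundedness of $R(t)$ on $SM$, one extracts a uniform rate of convergence: for every $\varepsilon > 0$ there exists $r_0$ with $|h_p(\exp_p(rv)) - h| < \varepsilon$ for all $r \ge r_0$ and all $(p,v) \in S\tilde M$.

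Integrating $\tfrac{\Theta_p'(v,r)}{\Theta_p(v,r)}$ from $r_0$ to $r$ then gives a two-sided bound
\[
c_1\, e^{(h-\varepsilon)r} \;\le\; \Theta_p(v,r) \;\le\; c_2\, e^{(h+\varepsilon)r},
\]
uniform in $(p,v) \in S\tilde M$ for $r \ge r_0$. Substituting into
\[
\Vol(B_p(R)) \;=\; \int_0^R\!\!\int_{S_p\tilde M} \Theta_p(v,r)\,dv\,dr
\]
and performing the elementary radial integral yields constants $C_1, C_2 > 0$ with $C_1\, e^{(h-\varepsilon)R} \le \Vol(B_p(R)) \le C_2\, e^{(h+\varepsilon)R}$ for $R$ large (and the analogous polynomial/subexponential estimate when $h=0$, since then the pointwise logarithmic derivative tends to $0$ uniformly). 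Taking $\tfrac{1}{R}\log$, letting $R \to \infty$, and finally $\varepsilon \to 0$, one obtains $h_v = h$.

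The main obstacle is the uniform convergence of the mean curvature of geodesic spheres to the horospheric constant $h$. Pointwise convergence is immediate from the AH definition, but without the compact-quotient hypothesis nothing prevents a family of base points or directions along which the rate of convergence degenerates, which would make the logarithmic integration above yield only $h_v \le h$ (or $h_v \ge h$) with no matching bound. Compactness of $M$ removes this possibility through the $\Gamma$-equivariance of $\Theta$ and $u^{+}$ together with the compactness of $SM$; this is the only place where the hypothesis that $M$ (not merely $\tilde M$) is compact enters in an essential way.
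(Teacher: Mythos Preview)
The paper does not supply its own proof of this statement; it is quoted from Zimmer \cite{Z.12} and then used as input to Theorem~\ref{co1}, so there is no in-paper argument to compare against. Your outline is the natural direct route and the uniform convergence you claim is in fact true for compact quotients, but the step you yourself flag as the main obstacle is asserted rather than proved: knowing that $u^{+}$ satisfies the Riccati equation with bounded $R$ does not by itself control the difference between the sphere operator (the second fundamental form of $S(p,r)$ at $\gamma_v(r)$) and the horospheric operator $u^{+}(\phi^r v)$. What is actually needed is an explicit comparison of Jacobi tensors expressing this difference through the stable and unstable tensors and extracting a uniform rate from bounded geometry (as in Eschenburg or Knieper), or, under a stronger hypothesis such as no focal points, a monotonicity argument combined with Dini's theorem on the compact base $SM$. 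Your sketch names the right ingredients (cocompactness, $\Gamma$-equivariance, curvature bounds) but does not supply this mechanism.

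One further remark: the two inequalities are not symmetric in difficulty. The bound $h_v \ge h$ already follows from the pointwise limit, since L'H\^opital gives $\tfrac{1}{r}\log\Theta_p(v,r)\to h$ for each fixed $v$, and integrating $\Theta$ over any small neighbourhood of a single direction yields a lower bound $Ce^{(h-\varepsilon)R}$ on $\Vol(B_p(R))$. Only $h_v \le h$ genuinely requires the uniform upper bound on $\Theta_p(v,r)$, and that is where both the compact-quotient hypothesis and the missing uniformity argument are doing real work.
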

 
 \begin{Thm}\label{Tit}
 {\bf  Tits  Theorem:}   A  finitely  generated  subgroup  of  a connected  Lie  group has either 
 exponential  growth  or  is  almost  nilpotent  and  hence has  polynomial  growth.
 \end{Thm}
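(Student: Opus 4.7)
The plan is to reduce the statement to three classical results from combinatorial and geometric group theory: the \emph{Tits alternative} for finitely generated linear groups (each such group either contains a nonabelian free subgroup or is virtually solvable), the \emph{Milnor--Wolf theorem} (a finitely generated virtually solvable group has either polynomial or exponential growth), and \emph{Gromov's polynomial growth theorem} (a finitely generated group of polynomial growth is virtually nilpotent). The task is therefore to pass from the Lie group setting to the linear setting so that the Tits alternative applies.

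First I would let $H$ be a finitely generated subgroup of the connected Lie group $G$ and write $R = \operatorname{Rad}(G)$ for the solvable radical, so that $Q := G/R$ is semisimple. Let $\pi : G \to Q$ denote the projection. Since the adjoint representation $\operatorname{Ad} : Q \to GL(\mathfrak{q})$ of the semisimple group $Q$ has discrete (central) kernel, the image $\Gamma := \operatorname{Ad}(\pi(H))$ is a finitely generated linear group. Applying the Tits alternative to $\Gamma$ yields two cases: either $\Gamma$ contains a copy of the free group $F_{2}$, or $\Gamma$ is virtually solvable.

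In the first case, exponential growth is inherited upward: $\Gamma$ arises from $H$ via a subnormal series with two kernels, namely $H \cap R$ (solvable because $R$ is) and $\ker(\operatorname{Ad}|_{\pi(H)})$ (central and hence abelian), and any group that surjects onto a group of exponential growth is itself of exponential growth, so $H$ has exponential growth. In the second case, the same extension argument shows that $H$ itself is virtually solvable, since extensions of solvable groups by virtually solvable groups remain virtually solvable. The Milnor--Wolf theorem then gives the polynomial-or-exponential dichotomy for $H$; in the polynomial branch, Wolf's explicit analysis of polynomial-growth solvable groups (or Gromov's theorem in full generality) upgrades the conclusion to virtual, i.e., almost, nilpotency.

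The main obstacle is the reduction from the Lie group $G$ to the linear group $\Gamma$: one must verify that solvability transfers through the two kernels and that the Tits-alternative output on $\Gamma$ lifts meaningfully to $H$. This is manageable at the abstract group level once one observes that the center of a connected semisimple Lie group is discrete (so $\ker \operatorname{Ad}$ is central abelian) and that subgroups of solvable Lie groups are solvable in the Malcev sense. The single deep ingredient is the Tits alternative itself, supplemented by the comparatively elementary Milnor--Wolf theorem and, in the polynomial branch, the polynomial-growth theorem whose full strength (Gromov) is convenient but not essential for the almost-nilpotency conclusion within the solvable world.
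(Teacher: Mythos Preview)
The paper does not supply a proof of this statement: Theorem~\ref{Tit} is quoted as a classical result (attributed to Tits) and is used as a black box in the proof of Theorem~\ref{co1}. There is therefore no proof in the paper to compare your proposal against.

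That said, your sketch is a correct and standard route to the result. The reduction via the solvable radical $R$ and the adjoint representation of the semisimple quotient $Q=G/R$ is exactly how one lands in the linear setting where the Tits alternative applies; the two kernels you identify ($H\cap R$, which is solvable as a subgroup of a solvable group, and $\ker\operatorname{Ad}\cap\pi(H)$, which is central and hence abelian) do propagate virtual solvability back to $H$. One small clarification: in the polynomial-growth branch you do not need the full strength of Gromov's theorem, since Wolf's theorem already shows that a finitely generated virtually solvable group of polynomial growth is virtually nilpotent; invoking Gromov is convenient but overkill, as you note. Also, the phrase ``solvable in the Malcev sense'' is unnecessary here---abstract solvability of subgroups of solvable groups is elementary group theory and suffices.
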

 
 \begin{Thm}\label{co1}
If $(M,g)$  is  compact  AHM,
then  $(M,g)$  is  flat.
\end{Thm}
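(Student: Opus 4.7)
The plan is to apply Tits' Theorem \ref{Tit} to force $\pi_1(M)$ to be virtually nilpotent, and then invoke a rigidity theorem for compact manifolds without conjugate points with virtually nilpotent fundamental group to conclude that $M$ is flat.

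First, let $\tilde M$ denote the Riemannian universal cover of $M$. Since $M$ is a compact AHM, $\tilde M$ is a simply connected, complete Riemannian manifold without conjugate points on which $\Delta b_v \equiv h = 0$ for every $v \in S\tilde M$. Proposition \ref{equ} then yields that the volume entropy satisfies $h_v(\tilde M,g) = h = 0$, so $\tilde M$ has subexponential volume growth.

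Next, the deck group $\Gamma := \pi_1(M)$ acts freely, properly discontinuously and cocompactly on $\tilde M$ by isometries. By Myers--Steenrod (Theorem \ref{MS}), $\mathrm{Iso}(\tilde M,g)$ is a Lie group, so $\Gamma$ sits inside it as a finitely generated subgroup. The Svarc--Milnor lemma identifies the word growth of $\Gamma$ with the volume growth of $\tilde M$, which is subexponential. Tits' Theorem \ref{Tit} then forces $\Gamma$ to be almost nilpotent, and in particular of polynomial growth. Since $M$ is a compact Riemannian manifold without conjugate points whose fundamental group is virtually nilpotent, the rigidity theorem of Lebedeva (generalizing Burago--Ivanov for flat tori and Croke--Schroeder for $2$-step nilpotent fundamental groups) then gives that $M$ is flat.

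The main obstacle is the external rigidity input invoked in the last step. A shortcut that stays entirely within the present paper is to observe that $\tilde M$ is itself a simply connected AHM (non-compact, since $M$ is compact without conjugate points of positive dimension), so the main Theorem \ref{f} yields directly that $\tilde M$ is isometric to $\R^n$, whence $M = \tilde M/\Gamma$ is flat. This alternative makes the Tits-theorem detour and the appeal to Lebedeva both unnecessary, but the route via Proposition \ref{equ} and Tits' Theorem has the advantage of being logically independent of the main theorem of the paper, which was proved only in the non-compact setting.
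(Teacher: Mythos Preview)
Your proof is correct and follows essentially the same route as the paper: Proposition~\ref{equ} gives $h_v=0$, hence subexponential volume growth of $\tilde M$ and of $\pi_1(M)$; Tits' Theorem~\ref{Tit} (applied to $\pi_1(M)$ as a finitely generated subgroup of the Lie group $\mathrm{Iso}(\tilde M)$) then forces polynomial growth, and Lebedeva's result \cite{L.05} yields flatness. Your additional observation that Theorem~\ref{f} applied directly to the non-compact $\tilde M$ gives a shortcut is valid, and your remark that the Tits-theorem route is logically independent of the main theorem matches the paper's stated purpose for placing this argument in the appendix.
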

\begin{proof}
If  $\tilde  M$  is  the  universal covering  space  of $M$, then  $\Delta  b_{v}  \equiv  0,$
for  all  $v  \in  S {\tilde M}$.   And hence the volume growth  entropy  $h_{v} = 0$,  by  Proposition \ref{equ}.  
We obtain,  volume  growth  of  $\tilde M$  is subexponential  and consequently,
  $\pi_{1}(M)$  is of  subexponential  growth.
 But  by  Tits'  Theorem,   $\pi_{1}(M)$   has  polynomial  growth,  
 as  it  is  a  subgroup  of  connected  Lie  group  Isom$(\tilde M)$,
 of   subexponential  growth.   By  \cite{L.05} compact  Riemannian manifold 
  without  conjugate  points and  with polynomial  growth  fundamental group   are  flat.
  We  conclude that  $(M,g)$  is  flat.
  \end{proof}

\noindent 
 As   any  harmonic  manifold is   AH we  conclude:
 
\begin{Cor}
If $(M,g)$  be a  compact HM,  then  $M$  is  flat.  
\end{Cor}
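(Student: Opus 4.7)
The plan is to obtain this corollary as an immediate specialization of the preceding Theorem \ref{co1} (every compact AHM is flat). The only bridge that needs to be built is the inclusion of the class of compact HM's inside the class of compact AHM's.

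First I would invoke the standard implication ``harmonic $\Rightarrow$ asymptotically harmonic'', which was recalled earlier in the paper with reference to \cite{RS.03}: a complete, simply connected harmonic manifold without conjugate points is AH, because on a harmonic manifold the mean curvature of every geodesic sphere is a radial function and so the limiting mean curvature along any Busemann function is constant. Since a compact harmonic manifold has its universal cover $(\tilde M,g)$ simply connected, complete, harmonic, and (by Allamigeon's theorem, as recalled in Section 1) without conjugate points, the universal cover $\tilde M$ is AH. Moreover, the HM hypothesis says precisely that the horospheres are minimal, i.e.\ that the constant mean curvature satisfies $h = 0$. Hence $(M,g)$ is a compact AHM.

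At this point I would simply apply Theorem \ref{co1}: a compact AHM is flat. Applied to $(M,g)$, this yields flatness and concludes the proof.

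The main obstacle is essentially nothing beyond bookkeeping. One has to check that the ``minimal horospheres'' condition passes correctly through the translation from harmonic-manifold language to AH language (it does, since minimality of horospheres is the statement $h = 0$), and one has to be careful that AH is a condition on the simply connected universal cover rather than on $M$ itself. All the genuine geometric content---namely that on a compact AHM the volume-growth entropy vanishes (Proposition \ref{equ}), that this forces $\pi_1(M)$ to be of subexponential growth and hence, by Tits' theorem, of polynomial growth, and finally that a compact manifold without conjugate points and with polynomially-growing fundamental group is flat via \cite{L.05}---has already been encapsulated in the proof of Theorem \ref{co1}. So this corollary is truly a one-step consequence.
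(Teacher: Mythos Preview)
Your proposal is correct and follows exactly the paper's own approach: the paper simply remarks ``As any harmonic manifold is AH we conclude'' and then states the corollary, so the entire content is precisely the reduction to Theorem~\ref{co1} via the implication harmonic $\Rightarrow$ AH that you spell out. Your version is just a more detailed rendering of the same one-line argument.
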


\end{document}